\DeclareMathAlphabet{\mathpzc}{OT1}{pzc}{m}{it}
\def\Aut{\operatorname{Aut}}
\def\edm{\operatorname{End}} 
\def\hom{\operatorname{Hom}}
\def\spec{\operatorname{Spec}} 
\def\mod{\operatorname{mod}}
\def\max{\operatorname{Max}} 
\def\GL{{\operatorname{GL}}}          
\def\SL{{\operatorname{SL}}}          
\def\O{{\operatorname{O}}}          
\def\SO{{\operatorname{SO}}}          
\def\EO{{\operatorname{EO}}}          
\theoremstyle{plain} 
\newtheorem{theorem}{Theorem}[section]
\newtheorem{lemma}[theorem]{Lemma}
\newtheorem{corollary}[theorem]{Corollary}
\theoremstyle{definition} 
\newtheorem{remark}[theorem]{Remark}
\newtheorem{definition}[theorem]{Definition}
\newtheorem{notation}[theorem]{Notation}
\numberwithin{equation}{section}
\def\ps@pprintTitle{
\def\@oddfoot{}
}
\long\def\pprintMaketitle{\clearpage
  \iflongmktitle\if@twocolumn\let\columnwidth=\textwidth\fi\fi
  \resetTitleCounters
  \def\baselinestretch{1}%
  \printFirstPageNotes
  \begin{center}%
 \thispagestyle{pprintTitle}%
   \def\baselinestretch{1}%
    \LARGE\@title\par\vskip18pt
    \normalsize\elsauthors\par\vskip10pt
    \footnotesize\itshape\elsaddress\par\vskip24pt
      \begin{quote}
      \begin{center}
     {\bfseries \upshape Dedicated to late Professor Amit Roy }
     \end{center}
      \end{quote}
      \vskip18pt
    \hrule\vskip12pt
     \ifvoid\absbox\else\unvbox\absbox\par\vskip10pt\fi
    \ifvoid\keybox\else\unvbox\keybox\par\vskip10pt\fi
    \hrule\vskip12pt
    \end{center}%
     \gdef\thefootnote{\arabic{footnote}}%
  }
\begin{document}
\begin{frontmatter}
\title{ Extendability of  quadratic modules over a polynomial extension of an equicharacteristic regular local ring}
\author[Ambily]{A. A. Ambily}
\ead{ambily@isibang.ac.in}
\author[Rao]{Ravi A. Rao}
\ead{ravi@math.tifr.res.in}
\address[Ambily]{Statistics and Mathematics Unit, Indian Statistical Institute, \\
 Bangalore 560 059, India}
\address[Rao]{School of Mathematics,Tata Institute of Fundamental Research,\\
Homi Bhabha Road, Mumbai 400 005, India}

\begin{abstract}
We prove that a quadratic $A[T]$-module $Q$ with Witt
index  ($Q/TQ$)$ \geq d$, where $d$ is the dimension of the equicharacteristic  
regular local ring $A$, is extended from $A$. This improves a theorem 
of the second named author who showed it when $A$ is the local ring at a smooth point 
of an  affine variety over an infinite field. To establish our result,  
we need to  establish a Local-Global Principle (of Quillen) for the 
Dickson--Siegel--Eichler--Roy (DSER) elementary orthogonal transformations.

\medskip

\noindent
{\small {\bf Keywords}. Quadratic modules, Dickson--Siegel--Eichler--Roy
transformations,  Local-Global Principle, Extendability.}\\
{\small {\bf AMS Subject classifications (2010)}. 19G99, 13C10, 11E70, 20H25}

\end{abstract}
\end{frontmatter}

\section{Introduction}

Let $A$ be a commutative Noetherian ring in which $2$ is invertible
and let $B$ be the polynomial $A$-algebra $A[X_1, \ldots, X_n]$ in $n$
indeterminates. Let $Q=(Q,q)$ be a quadratic space over $B$ and let
$Q_0=(Q_0,q_0)$ be the  reduction of $Q$ modulo the ideal of $B$
generated by $X_1, \ldots, X_n$.  In \cite{MR0469914}, A. A. Suslin and V. I. Kopeiko 
proved that  if $Q$ is stably extended from $A$ and
for every maximal ideal $\mathfrak{m}$ of $A$, the Witt index of
$A_{\mathfrak{m}}\otimes_A(Q_0, q_0)$  is larger than the Krull dimension
of  $A$, then $(Q,q)$ is extended from $A$. A shorter proof of this, 
due to Inta  Bertuccioni, can be found in \cite{MR0674527} and another
proof is in  the thesis of the second named author. 

In the thesis of the second named author (see \cite{MR727375}, 
\cite{MR748229}), it was  shown that one can improve this result to
Witt index $\geq d$, when $A$ is a local ring at a non-singular point
of an affine variety of dimension $d$  over an infinite
field. Moreover, a question was posed at the end of the thesis 
whether extendability can  be shown for quadratic spaces with Witt 
index $\geq d$ over polynomial extensions of 
any equicharacteristic regular local ring of dimension $d$. 

In this article, we establish this question affirmatively. 

A few words on the proof: The analysis of the equicharacteristic
regular  local ring is done by a patching argument, akin to the one
developed by  Amit Roy in  his paper \cite{MR0656854}. This argument
reduces the problem to the  case of a complete equicharacteristic
regular ring; which is a power series  ring over a field, provided one
can patch the information. 

We found it useful to use Amit Roy's elementary orthogonal 
transformations in \cite{MR0231844} for quadratic spaces with a hyperbolic 
summand over a commutative ring. These transformations (over fields) are 
known as {\it Siegel} transformations or {\it Eichler} transformations in 
the literature: we give a brief historical statement of the development.

These transformations (in matrix form) of quadratic spaces $(V, q)$ over finite 
fields first appeared on pg.12 in L. E. Dickson's book  
``Linear groups: With an exposition of the Galois field theory'' (1958), which 
is an unaltered republication of the first edition (Teubner, Leipzig, 1901). 
Later in ``Sur les groupes 
classiques'' (1948), J. Dieudonn\'{e} extended these results over infinite fields.

These orthogonal transformations (in a matrix form) over 
general fields also appeared in a paper of C. L. Siegel: 
\"{U}ber die analytische Theorie der quadratischen Formen II. Annals of Math. 
36 (1935), 230-263.

Another interpretation occurs in his work \textquotedblleft \"{U}ber die 
Zetafunktionen indefiniter quadratischer Formen\textquotedblright, II.,  Math. Zeitschrift, 
1938, 398-426 (on page 408). Here he used it to define the mass of 
representation of $0$ by an indefinite quadratic form.

M. Eichler studied these transformations of $Q \perp H(k)$ 
in his study of the orthogonal group over fields $k$ and made the first 
systematic use of them in his famous 
book \textquotedblleft Quadratische Formen und orthogonale Gruppen\textquotedblright, 
first published in 1952, and reprinted in 1974. 
 
(Eichler credits Siegel's 1935 
paper for introducing these transformations in the notes on $\S 3$ on pg. 212 
of his book, and also refers to the 1938 Zeitschrift paper of Siegel on 
pg. 218. He does not seem to be aware of Dickson's work.)

Amit Roy studied C. T. C. Wall's paper \cite{MR0155798}, who relied on Eichler's 
book. Amit Roy rewrote the transformations of Eichler in Wall's paper. He 
then generalized these transformations in his thesis (1967) over any 
commutative ring $R$. We shall call these the DSER elementary orthogonal 
transformations or just (Roy's) elementary orthogonal transformation group. 

Note that these transformations of Roy have been further extended to 
form rings by L. N. Vaserstein (when the ring is local), and 
A. Bak (to the general case) in their thesis (respectively). 

We show that  the patching process is possible by establishing a Local-Global 
Principle  for the Elementary Orthogonal group of a quadratic space with a hyperbolic summand. 
For this, we follow the broad outline of A. A. Suslin's method in \cite{MR0472792}  
which led to a $K_1$ analogue of D. Quillen's Local-Global Principle in
\cite{MR0427303}. Instead of using Suslin's `theory of generic forms which 
are elementary', we follow the more `hands on' approach via the yoga of 
commutators. For this, we have to first find an appropriate generating set 
for Roy's group; which is the primary objective of $\S 2$ (That this set 
generates the group is proved in $\S 3$, via V. Suresh's lemma 
in \cite{MR1293626}). We record the commutator calculus in $\S 4$, 
and refer the reader to rigorous proof of these identities to an article 
we have placed in the arXiv \cite{aa}. These commutator calculations 
enable us to prove the Local-Global Principle for Roy's group of orthogonal 
transformations over a polynomial extension.

As an interesting by-product, one realizes from the yoga of commutators
in this elementary orthogonal group that it mimics Tang's well-known
group in some features defined in \cite{MR1609905}, and the unitary
group  of Bass defined in \cite{MR0371994}. The first named author 
intends to pursue the study of  this group in more detail in a 
sequel article, where she hopes to establish  A. Bak's type 
(see \cite{MR1115826}) solvability theorem for the quotient 
group by the elementary subgroup. 

Note: To make the reading effortless, we have placed the onerous (but
straightforward) computations in this group as an article on the arXiv
(see \cite{aa}) which can be accessed by any reader. 

Finally, we have not attempted to study the `$\Lambda$-ring variant' of 
this problem via the variant elementary orthogonal group as defined  by 
A. Bak in his thesis  (see \cite{MR0252431}). We feel that it will throw 
more light on the interrelationship between  all these groups; which will 
be carried out in a separate venture by the first named author.

\section{Preliminaries}

Let $A$ be a commutative ring in which $2$ is invertible. A {\em
quadratic $A$-module} is a pair $(M,q)$, where $M$ is an $A$-module
and $q$ is a quadratic form on $M$. A {\em quadratic space} over $A$
is a pair $(M,q)$, where $M$ is a finitely generated projective
$A$-module and $q:M\longrightarrow A$ is a non-singular quadratic
form. Let $M^*$ denotes the dual of the module $M$. Let $B_q$ be the
symmetric bilinear form associated to $q$ on $M$, which is given by
$B_q(x,y) = q(x+y)-q(x)-q(y)$ and $d_{B_q} : M \rightarrow M^*$ be
the induced isomorphism given by $d_{B_q}(x)(y) = B_q(x,y)$, where
$x,y\in M$. Given two quadratic $A$-modules $(M_1,q_1)$ and
$(M_2,q_2)$, their orthogonal sum $(M,q)$ is defined by taking
$M=M_1\oplus M_2$ and $q((x_1,x_2))=q_1(x_1)+q_2(x_2)$ for $x_1 \in M_1,
x_2 \in M_2$. Denote $(M,q)$ by $(M_1,q_1)\perp(M_2,q_2)$ and $q$
by $q_1 \perp q_2$.

Let $P$ be a finitely generated projective $A$-module. The module $P
\oplus P^*$ has a natural quadratic form given by $p((x,f)) = f(x)$ for
$x\in P$, $f\in P^*$. The corresponding bilinear form $B_p$ is given
by $B_p((x_1,f_1),(x_2,f_2)) = f_1(x_2)+f_2(x_1)$ for $x_1,x_2 \in P$ and
$f_1,f_2 \in P^*$. The quadratic space $(P \oplus P^*, p)$, denoted by
$H(P)$, is called the {\em hyperbolic space} of $P$. A quadratic space
$M$ is said to be hyperbolic if it is isometric to $H(P)$ for some
$P$. The quadratic space $H(A)$, denoted by $h$, is called a
\emph{hyperbolic plane}. The orthogonal sum $h\perp h \perp \cdots \perp h$
of $n$ hyperbolic planes is denoted by $h^n$. A quadratic space $M$ is
said to have {\em Witt index} $\ge n$ if $M\simeq M_0 \perp H(P)$,
where rank $P\ge n$. A quadratic space $M$ is said to have {\em
hyperbolic rank} $\ge n$ if $M \simeq M_0 \perp h^d$, where $d\ge n$. A
quadratic space $M$ is said to be {\em cancellative} if for any
quadratic $A$-spaces $M_1,M_2$ with $M\perp M_2 \simeq M_1\perp M_2$,
then $M \simeq M_1$.

Let $Q$ be a quadratic $A$-space and $P$ be a finitely generated
projective $A$-module. Now let $M = Q \perp H(P)$. This is a
quadratic space with the quadratic form $q\perp p$. The associated
bilinear form on $M$, denoted by $\langle \cdot , \cdot\rangle$, is
given by
\begin{align*} \langle (a,x),(b,y) \rangle &= B_q((a,b)) +
B_p((x,y))\textnormal{ for all } a,b \in Q \textnormal{ and } \,x,y
\in H(P),
\end{align*} where $B_q$ and $B_p$ are the bilinear forms on $Q$ and
$P$.  Let $M = M(B,q)$ be a quadratic module over $A$ with quadratic
form $q$ and associated symmetric bilinear form $B$. Then the
orthogonal group of $M$ is defined as follows:
\begin{equation}\label{oam} \O_A(M) = \{\sigma \in \Aut(M)\mid
q(\sigma(x))=q(x) \textnormal{ for all } x \in M\},
\end{equation} where $\Aut(M)$ be the group of all $A$-linear
automorphisms of $M$.

Let $M$ be a free module of finite rank. By choosing a basis for $M$,
we can define $$\SO_A(M)=\SL(M)\cap \O_A(M),$$ where $\SL(M)$ is the
subgroup of $\Aut(M)$ consists of automorphisms of determinant 1.
This is a normal subgroup of $\O_A(M)$ and is called the {\em special
orthogonal group} of $M$. See \cite{MR1007302} for more details.

For any $A$-linear map $\alpha : Q \rightarrow P$($\beta :
Q\rightarrow P^*$), the dual map $\alpha^t : P^* \rightarrow Q^*$
($\beta^t : P^{**}\simeq P \rightarrow Q^*$) is defined as
$\alpha^t(\varphi) = \varphi \circ \alpha$ ($\beta^t(\varphi^*) =
\varphi^* \circ \beta$) for $\varphi \in P^*$ ($\varphi^* \in P^{**}$). Recall from
\cite{MR0231844}, the $A$-linear map $\alpha^* : P^*\rightarrow Q$
($\beta^* : P\rightarrow Q$) is defined by $\alpha^* = d_{B_q}^{-1}\circ
\alpha^t$ ($\beta^* = d_{B_q}^{-1}\circ \beta^t \circ \varepsilon$,
where $\varepsilon: P\rightarrow P^{**}$ is the natural isomorphism)
and is characterized by the relation
\[(f\circ\alpha)(z) = B_q\left(\alpha^*(f),z \right)  \textnormal{ for } \,f\in
P^*, z\in Q.\]

In \cite{MR0231844}, A. Roy defined the ``elementary'' transformations
$E_{\alpha}, E_{\beta}^*$ of $Q\perp H(P)$ given by
\[\begin{array}{lllll}
\medskip 
E_{\alpha}(z) &= z+\alpha(z) & & E_{\beta}^*(z) &=z+\beta(z)\\ 
\medskip
E_{\alpha}(x) &= x & & E_{\beta}^*(x) &= -\beta^*(x)+ x-\frac{1} {2}\beta\beta^*(x)\\ 
\medskip
E_{\alpha}(f) &= -\alpha^*(f)-\frac{1}{2}\alpha\alpha^*(f)+f & & E_{\beta}^*(f) &= f
\end{array}\] 
for $z \in Q, x \in P$ and $f\in P^*$.  Observe that
these transformations are orthogonal with respect to the above
quadratic form $q \perp p.$

Now we recall the notion of {\em generalized dimension function} from
\cite{MR722004}. Let $\mathcal{P} \subset \spec A$ be a set of primes,
$\mathbb{N}$ be the set of natural numbers and $d: \mathcal{P}
\rightarrow \mathbb{N} \cup \{0\}$ be a function. For primes
$\mathfrak{p}, \mathfrak{q}$ of $\mathcal{P}$, define a partial order
$\ll$ on $\mathcal{P}$ as $\mathfrak{p} \ll \mathfrak{q}$ iff
$\mathfrak{p} \subset \mathfrak{q}$ and $d(\mathfrak{p}) >
d(\mathfrak{q})$. A function $d: \mathcal{P} \rightarrow \mathbb{N}
\cup \{0\}$ is a generalized dimension function if for any ideal $I$
of $A$, $V(I) \cap \mathcal{P}$ has only a finite number of minimal
elements with respect to the partial ordering $\ll$.

We found it difficult to give a meaningful set of commutator relations
for the set of generators 
$\{E_{\alpha}, E^*_{\beta}~|~\alpha \in \hom_A(Q, P), \beta \in \hom_A(Q,P^*)\}$.

Let $Q$ and $P$ be free $A$-modules. In this case, we could conceive of a
natural set of generators, for which we could develop the commutator
machinery. These generators will be denoted by $E_{\alpha_{ij}}$,
$E^*_{\beta_{ij}}$ below. We proceed to define these now. 

\begin{notation} Let $P$ and $Q$ be free modules of rank $m$ and $n$
respectively, then we can identify $P$, $P^*$ and $Q$ with $A^m$, $A^m$
and $A^n$ respectively. Let $\{z_i : 1\leq i\leq n \}$ be a basis for
$Q$, $\{g_i : 1\leq i\leq n \}$ be a basis for $Q^*$, $\{x_i : 1\leq
i\leq m \}$ be a basis for $P$ and $\{f_i : 1\leq i\leq m \}$ be a basis
for $P^*$.

Let $p_i: A^n \longrightarrow A$ be the projection onto the $i^{th}$
component and $\eta_i: A \longrightarrow A^n$ be the inclusion into
the $i^{th}$ component.  Let $\alpha \in \hom(Q,P)$. Let $\alpha_{i},
\alpha_{ij} \in \hom(Q,P)$ be the maps given by 
$$\alpha_{i} = \eta_i\circ p_i\circ\alpha \quad\textnormal{and}\quad \alpha_{ij} =
\eta_i\circ p_i\circ\alpha\circ \eta_j\circ p_j$$ 
for $1 \leq i \leq m$ and $1\leq j \leq n$. 
Clearly $\alpha = {\Sigma_{i=1}^m\alpha_{i}}
= {\Sigma_{i=1}^m\Sigma_{j=1}^n\alpha_{ij}}.$ Then 
$\alpha_{i}^*,\alpha_{ij}^* \in \hom(P^*,Q)$ is the maps given by 
$$\alpha_{i}^* = {(\alpha^*)}_i = \alpha^*\circ \eta_i\circ p_i \quad \textnormal{and}\quad \alpha_{ij}^* = (\alpha^*)_{ij}= \eta_j\circ p_j\circ\alpha^*\circ \eta_i\circ p_i.$$  Then $\alpha^* = {\Sigma_{i=1}^m\alpha_{i}^*} = {\Sigma_{i=1}^m\Sigma_{j=1}^n\alpha_{ij}^*}$. One can also see that this definition of $\alpha_{i}^*,\alpha_{ij}^*$ coincides with the one obtained by applying $\alpha^* = {d_{B_q}}^{-1}\circ \alpha^t \in \hom(P^*,Q)$ to $\alpha_{i}$ and $\alpha_{ij}$.

\vspace{2mm} Let $z = {\Sigma_{j=1}^n d_jz_j}\in Q$ for $d_j \in
A\,(1\le j \le n)$.  Then $\alpha$ is given by  $\alpha(z_j) = x^{(j)} =
{\Sigma_{i=1}^m b_{ij}x_i} $  for $b_{ij} \in A\,(1\le i\le m)$ and  $\alpha(z)
= {{\Sigma_{j=1}^n\Sigma_{i=1}^m d_jb_{ij} x_i}}$, $\alpha_i(z) =
{{\Sigma_{j=1}^n d_jb_{ij} x_i}}$ and  $\alpha_{ij}(z) = d_jb_{ij}
x_i$.  Let $\alpha^*(f_i) = w_i$ for some $w_i \in Q$. If $f = {\Sigma_{i=1}^m c_if_i}$ for $c_i \in A\,(1\le i\le m)$, then $c_i = \langle
f,x_i \rangle$ and so $\alpha^*(f) = {\Sigma_{i=1}^m \langle f,x_i
\rangle w_i}$. If $w_i = {{\Sigma_{j=1}^n y_jz_j}}$ for some $y_j \in
A$, then $w_{ij} = y_jz_j\in Q$. 

\vspace{2mm}
For $1 \leq i \leq m$ and $1 \leq j \leq n$, the maps $\alpha_i^*$ and
$\alpha_{ij}^*$'s are given by
\[ \begin{array}{llll} 
 \alpha_i^*(f_j) = \begin{cases} 
		      w_i &\textnormal{if}\quad  j=i,\\ 
		      0 &\textnormal{if}\quad    j \neq i.
		   \end{cases} & & &
\alpha_{ij}^*(f_k) = \begin{cases}
			 w_{ij}&\textnormal{if}\quad k=i,\\ 
			 0 &\textnormal{if}\quad k \neq i.
		      \end{cases}
      \end{array} \]
Let $\beta \in \hom(Q,P^*)$. Set $\beta^*(x_i) = v_i$ for
some $v_i \in Q$, let $v_{ij}$ denotes the element $\eta_j\circ
p_j(v_i) $. Now defining the maps $\beta_i, \beta_{ij}, \beta_i^*$,
$\beta_{ij}^*$ similarly and extending these to the whole of $Q\oplus
P\oplus P^{\ast}$, we get the maps as follows:  For $z\in Q$,
$x\in P$, $f\in P^*$; $1\leq i\leq m$ and $1\leq j\leq n$;
\[\begin{array}{llllll} 
\medskip
\alpha_{ij}(z,x,f) &= \left( 0,\langle w_{ij},z \rangle x_i,0 \right),  & &\beta_{ij}(z,x,f) &=
\left(0,0,\langle v_{ij},z \rangle f_i\right),\\
\medskip
\alpha_i(z,x,f) &= \left( 0,\langle w_i,z \rangle x_i,0 \right), &
&\beta_i(z,x,f) &= \left(0,0,\langle v_i,z \rangle f_i\right),\\
\medskip
\alpha(z,x,f) &= \left( 0,{\Sigma_{i=1}^m \langle w_i,z \rangle
x_i},0\right),& &\beta(z,x,f) &= \left(0,0,{\Sigma_{i=1}^m \langle
v_i,z \rangle f_i}\right),\\
\medskip
\alpha_{ij}^*(z,x,f) &= \left(\langle f,x_i \rangle w_{ij},0,0
\right),& &\beta^*_{ij}(z,x,f) &= \left(\langle x,f_i \rangle
v_{ij},0,0 \right),\\
\medskip
\alpha_i^*(z,x,f) &= \left(\langle f,x_i \rangle w_i,0,0 \right),&
&\beta^*_i(z,x,f) &= \left(\langle x,f_i \rangle v_i,0,0 \right),\\
\medskip
\alpha^*(z,x,f) &= \left({\Sigma_{i=1}^m \langle f,x_i \rangle
w_i},0,0 \right), & &\beta^*(z,x,f) &= \left({\Sigma_{i=1}^m \langle
x,f_i \rangle v_i},0,0 \right).
\end{array}\] 
Also, $q(w_{ij}) = \frac{1}{2}\langle w_{ij},w_{ij}
\rangle$ and $q(v_{ij}) = \frac{1}{2}\langle v_{ij},v_{ij} \rangle$.

\vspace{3mm}
For $\alpha \in \hom(Q,P)$, the orthogonal transformation $E_{{\alpha}_{ij}}$ on $Q\perp H(P)$  is
given by
\begin{align*} E_{\alpha_{ij}}(z,x,f) &=
\left(I-\alpha^*_{ij}+\alpha_{ij}-\frac{1}{2}\alpha_{ij}\alpha^*_{ij}\right)(z,x,f)\\
&= \left(z-\langle f,x_i \rangle w_{ij},\;x+\langle w_{ij},z\rangle
x_i -\langle f,x_i \rangle q(w_{ij})x_i,\;f\right).            
\end{align*}

For $\beta \in \hom(Q,P^*)$, the orthogonal transformation $E_{{\beta}_{ij}}^*$ of $Q\perp H(P)$ is
given by
\begin{align*} E_{\beta_{ij}}^*(z,x,f) &=
\left(I-\beta_{ij}^*+\beta_{ij}-\frac{1}{2}\beta_{ij}\beta_{ij}^*\right)(z,x,f)\\
&= \left(z-\langle f_i,x \rangle v_{ij},\;x,\;f+\langle
v_{ij},z\rangle f_i -\langle x,f_i \rangle q(v_{ij})f_i\;\right).            
\end{align*}
\end{notation}
\section{Roy's Elementary orthogonal transformations} In this section,
we consider the orthogonal group of $Q\perp H(P)$, 
denoted by $\O_A(Q\perp H(P))$, where $Q$ and $P$ are free $A$-modules
of finite rank. Precisely,
\[\O_A(Q\perp H(P)) = \{\sigma \in \Aut(Q\perp H(P))\mid (q\perp
p)(\sigma(z,y))= (q \perp p)(z,y)\; \forall\; (z,y) \in Q\perp
H(P)\}.\] Since $Q$ and $P$ are free modules, the elements of
$\O_A(Q\perp H(P))$ can be represented as matrices over $A$ by
choosing a basis for $Q$ and $P$. Then we can identify $\O_A(Q\perp
H(P))$ as a subgroup of $\GL_{(n+2m)}(A)$. 
\begin{lemma}  An $(n+2m)\times(n+2m)$ matrix $T = \begin{pmatrix}
						    A&B&C\\ D&F&G\\ H&J&K
						  \end{pmatrix}$ belongs to
$\O_A(Q\perp H(P))$  if and only if any of the following equations
hold.
\begin{enumerate}[label= \em(\alph{*})]
 \item $T^t \psi T = \psi,$ for $\psi = \begin{pmatrix} \phi&0&0\\
0&0&I\\ 0&I&0
                                    \end{pmatrix} $, where $\phi$ is
the matrix corresponding to the non-singular quadratic form $q$ on $Q$ and
$\begin{pmatrix} 0&I\\ I&0
\end{pmatrix}$
is the matrix of the hyperbolic form $p$.
\vspace{2mm}
\item 
    $ \begin{pmatrix} 
	      \phi^{-1} A^t\phi&\phi^{-1}H^t&\phi^{-1}D^t\\ 
	      C^t\phi&K^t&G^t\\ 
	      B^t\phi&J^t&F^t
	\end{pmatrix} . \begin{pmatrix}
				A&B&C\\ D&F&G\\ H&J&K
			 \end{pmatrix} = Id$.
\end{enumerate}
\end{lemma}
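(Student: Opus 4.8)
The plan is to reduce the statement to the standard fact that, over a ring in which $2$ is a unit, an automorphism preserves a quadratic form if and only if it preserves the associated symmetric bilinear form, and then to translate that into matrix form by a block computation. First I would fix the ordered basis $\{z_1,\dots,z_n\}\cup\{x_1,\dots,x_m\}\cup\{f_1,\dots,f_m\}$ of $Q\oplus P\oplus P^*$ as in the Notation above, so that the Gram matrix of the bilinear form $\langle\cdot,\cdot\rangle=B_q\perp B_p$ on $Q\perp H(P)$ is exactly $\psi=\left(\begin{smallmatrix}\phi&0&0\\0&0&I\\0&I&0\end{smallmatrix}\right)$: the block $\left(\begin{smallmatrix}0&I\\I&0\end{smallmatrix}\right)$ records $B_p((x_i,f_j))$ from the definition $B_p((x_1,f_1),(x_2,f_2))=f_1(x_2)+f_2(x_1)$, and $\phi$ records $B_q$ on $Q$. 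Here it is essential that $q$ is non-singular, so that $d_{B_q}$ is an isomorphism and hence $\phi\in\GL_n(A)$; combined with $\det\left(\begin{smallmatrix}0&I\\I&0\end{smallmatrix}\right)=\pm1$ this shows $\psi\in\GL_{n+2m}(A)$ with $\psi^{-1}=\left(\begin{smallmatrix}\phi^{-1}&0&0\\0&0&I\\0&I&0\end{smallmatrix}\right)$.

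Next I would prove (a). For $T\in\Aut(Q\perp H(P))$, the condition $(q\perp p)(Tv)=(q\perp p)(v)$ for all $v$ is equivalent to $\langle Tv,Tw\rangle=\langle v,w\rangle$ for all $v,w$: one direction is polarization, and the converse uses $q(v)=\tfrac12\langle v,v\rangle$, which is where $2\in A^{\times}$ enters. Writing vectors as columns, $\langle Tv,Tw\rangle=\langle v,w\rangle$ for all $v,w$ says $v^{t}T^{t}\psi T w=v^{t}\psi w$ for all $v,w$, i.e. $T^{t}\psi T=\psi$. For the remaining direction of (a) I would note that if $T$ is merely a square matrix satisfying $T^{t}\psi T=\psi$, then taking determinants gives $(\det T)^{2}\det\psi=\det\psi$ with $\det\psi\in A^{\times}$, so $\det T\in A^{\times}$ and $T\in\GL_{n+2m}(A)=\Aut(Q\perp H(P))$ automatically; hence the automorphism hypothesis in the definition of $\O_A$ is redundant once $T^{t}\psi T=\psi$ holds, and (a) is established.

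Finally, (a) $\Longleftrightarrow$ (b) is a rewriting: since $\psi\in\GL_{n+2m}(A)$, the identity $T^{t}\psi T=\psi$ is equivalent to $\psi^{-1}T^{t}\psi T=Id$. Computing $\psi^{-1}T^{t}\psi$ in $3\times3$ block form with $T^{t}=\left(\begin{smallmatrix}A^{t}&D^{t}&H^{t}\\ B^{t}&F^{t}&J^{t}\\ C^{t}&G^{t}&K^{t}\end{smallmatrix}\right)$ — left multiplication by $\psi^{-1}$ scales the first block row by $\phi^{-1}$ and swaps the last two block rows, right multiplication by $\psi$ multiplies the first block column by $\phi$ and swaps the last two block columns — yields precisely the matrix $\left(\begin{smallmatrix}\phi^{-1}A^{t}\phi&\phi^{-1}H^{t}&\phi^{-1}D^{t}\\ C^{t}\phi&K^{t}&G^{t}\\ B^{t}\phi&J^{t}&F^{t}\end{smallmatrix}\right)$ appearing in (b). I do not expect any genuine obstacle here; the only thing requiring care is keeping track of the transposes and of the two block-row/block-column swaps induced by $\psi$ and $\psi^{-1}$, so that (b) comes out exactly as stated rather than a transposed or permuted variant.
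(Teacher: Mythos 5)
Your proposal is correct and is essentially the paper's own argument: the paper dismisses the lemma with ``follows immediately from the definition of $\O_A(Q\perp H(P))$'', and your write-up (polarization using $2\in A^{\times}$ to pass from $q\perp p$ to the bilinear form, the Gram-matrix identity $T^{t}\psi T=\psi$, the determinant remark making invertibility automatic, and the block computation of $\psi^{-1}T^{t}\psi$ giving (b)) is exactly the verification being left to the reader. Your reading of $\phi$ as the Gram matrix of $B_q$ (consistent with the block $\left(\begin{smallmatrix}0&I\\ I&0\end{smallmatrix}\right)$ being the Gram matrix of $B_p$) is the intended interpretation, so no issue arises there.
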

\begin{proof} 
Follows immediately from the definition of $\O_A(Q\perp H(P))$.
\end{proof}   
Let $\EO_A(Q\perp H(P))$ be the subgroup of
$\O_A(Q\perp H(P))$ generated by $E_\alpha$ and $E_\beta^*$, where
$\alpha \in \hom(Q,P)$ and $\beta \in \hom(Q,P^*)$. We call this group
{\em elementary orthogonal group} and these transformations {\em
elementary orthogonal transformations}. If $Q$ and $P$ are free modules of rank $n$ and $m$ respectively,  we have the elementary transformations of the type $E_{\alpha_{ij}}$ and $E_{\beta_{ij}}^*$ for $1 \leq i \leq m$, $1 \leq j \leq n$. 

\vspace{1mm}

Next, we compare the elementary orthogonal group of Roy's
elementary transformations and that of the Dickson-Siegel-Eichler
transformations which is defined as follows: 

\begin{definition}\cite[{Chapter~5}]{MR1007302} {\em Let $(M,B,q)$ be a non-degenerate quadratic module over $A$ and let $\O_A(M)$ be its orthogonal group. Let $u$ and $v$
be in $M$ with $u$ isotropic and $B(u,v)=0$. For $r=q(v)$, define the
Dickson-Siegel-Eichler transformation $\Sigma_{u,v,r} \in \edm(M)$,
by}
\begin{equation*} \Sigma_{u,v,r}(x) = x+uB(v,x)-vB(u,x)-urB(u,x).
\end{equation*}
\end{definition} One can easily verify the following properties of
Eichler transformations.
\begin{enumerate}[label= (\roman{*})]
	  \item $\Sigma_{u,v,q(v)} \in \O_A(M)$,
	  \item $\Sigma_{u,v,q(v)} \Sigma_{u,w,q(w)} =
		  \Sigma_{u,v+w,q(v)+q(w)+h(v,w)}$,
	  \item $\Sigma_{u,v,q(v)}^{-1} = \Sigma_{u,-v,q(v)}$,
	  \item $\sigma\Sigma_{u,v,q(v)}\sigma^{-1} = \Sigma_{\sigma u,\sigma
		  v,q(v)}$ for $\sigma \in \O_A(M)$.
\end{enumerate}

Observe that $\Sigma_{0,0,0}=Id$. 

\vspace{1mm}

We may regard the elementary orthogonal transformations
$E_{\alpha_{ij}}$ and $E_{\beta_{ij}}^*$ as Dickson-Siegel-Eichler
transformations. More precisely, the orthogonal transformation $E_{{\alpha}_{ij}}$ of $M=Q\perp H(P)$
given by
 \begin{equation*} E_{\alpha_{ij}}(z,x,f) = (z-\langle f,x_i \rangle
w_{ij},\;x+\langle w_{ij},z\rangle x_i -\langle f,x_i \rangle
q(w_{ij})x_i,\;f)
 \end{equation*} can be written as
$\Sigma_{x_i,w_{ij},q(w_{ij})}(z,x,f)$. For,
\begin{align*}
\Sigma_{x_i,w_{ij},q(w_{ij})}(z,x,f)=&(z,x,f)+(0,x_i,0)\langle
(w_{ij},0,0),(z,x,f)\rangle-(w_{ij},0,0)\\&\langle
(0,x_i,0),(z,x,f)\rangle-(0,x_i,0)q(w_{ij})\langle
(0,x_i,0),(z,x,f)\rangle\\ =&(z-\langle f,x_i \rangle
w_{ij},\;x+\langle w_{ij},z\rangle x_i -\langle f,x_i \rangle
q(w_{ij})x_i,\;f).
\end{align*} Similarly, the orthogonal transformation
$E_{{\beta}_{ij}}^*$ of $M$ given by
\begin{equation*} E_{\beta_{ij}}^*(z,x,f)= (z-\langle f_i,x \rangle
v_{ij},\;x,\;f+\langle v_{ij},z\rangle f_i -\langle x,f_i \rangle
q(v_{ij})f_i\;)
\end{equation*} can be written as
$\Sigma_{f_i,v_{ij},q(v_{ij})}(z,x,f)$.

These elementary orthogonal transformations also satisfy the
properties listed above. Moreover, as we saw in the previous section,
they satisfy a more general set of properties analogous to
Property(ii). 

The transformations defined by A. Roy \cite{MR0231844} can also be
viewed as {\em unitary transvections} \cite[{Section~5}]{MR0371994} of
certain types of quadratic modules over a {\em unitary ring}
$(A,\lambda, \Lambda)$. See \cite[{Section~4}]{MR0371994} for further details of unitary rings.

Let $M = V \perp H(P)$. If $x = (v;p,q) \in M$, we have $f(x,x) =
f(v,v) +{\langle q, p \rangle}_P$. Suppose $P$ has a unimodular
element $p_0$. i.e. there is a $q_0 \in \overline{P}$ such that
${\langle q_0, p_0 \rangle}_P = 1$. For any elements $p_0 \in P, w_0
\in V$ and $a_0 \in A$ with $a_0\equiv f(w_0,w_0)\mod \Lambda$,
\begin{align*} f(p_0,p_0) &\in \Lambda,\\ \langle w_0, p_0 \rangle &=
0,\\ f(w_0,w_0)& \equiv a_0 \mod \Lambda.
\end{align*}  If $x = (v;p,q)$, then 
\[\sigma_{p_0,a_0,w_0}(x)  = x +
p_0 \langle w_0, x \rangle  - w_0  \overline{\lambda} \langle p_0, x
\rangle  - p_0 \overline{\lambda} a_0 \langle p_0, x \rangle .\] Now
take $\Lambda = 0, \lambda =1, f(w_0,w_0) = a_0$ and $\langle w_0, w_0
\rangle = 2 f(w_0,w_0) = 2a_0$. Then we get 
\begin{align*} E_{\alpha_{ij}}(z,x,f) &= \sigma_{x_i,\frac{\langle
w_{ij}, w_{ij} \rangle}{2}, w_{ij}}(z,x,f), \\ E_{\beta_{ij}}^*(z,f,x)
&= \sigma_{f_i,\frac{\langle v_{ij}, v_{ij} \rangle}{2},
v_{ij}}(z,f,x). 
\end{align*}

Now we state the splitting property and extend the Lemma~1.4 of 
\cite{MR1293626} regarding Roy's transformations. 
We use the notation $E(\alpha)$ for either $E_{\alpha}$ or $E_{\alpha}^*$, where $\alpha
\in \hom(Q,P)$ or $\hom(Q,P^*)$ respectively. Combining
Lemma~1.2 and Lemma~1.3 of \cite{MR1293626}, we have the following:
\begin{lemma}[{Splitting property \cite{MR1293626}}] For {\em
$\alpha_1,\alpha_2 \in \hom(Q,P)$} or { $\hom(Q,P^*)$ } we have
\[ E\left(\alpha_1+\alpha_2\right) =
E\left(\frac{\alpha_1}{2}\right)E\left(\alpha_2\right)E\left(\frac{\alpha_1}{2}\right)
=
E\left(\frac{\alpha_2}{2}\right)E\left(\alpha_1\right)E\left(\frac{\alpha_2}{2}\right).
\]
 \end{lemma}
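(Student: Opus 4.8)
The plan is to reduce the claimed three-factor identity to a purely formal fact about how the elementary transformations $E(\alpha)$ behave under scaling of the parameter $\alpha$, and then exploit the additivity already recorded in Property~(ii) of the Dickson--Siegel--Eichler transformations. First I would fix whether $E(\alpha)$ means $E_\alpha$ (for $\alpha \in \hom(Q,P)$) or $E_\beta^*$ (for $\beta \in \hom(Q,P^*)$); the two cases are mirror images, so it suffices to carry out one, say $E_\alpha$, and remark that the other follows by the symmetric argument swapping the roles of $P$ and $P^*$. Recall from Roy's formulas that $E_\alpha(z) = z + \alpha(z)$, $E_\alpha(x) = x$, and $E_\alpha(f) = f - \alpha^*(f) - \tfrac12 \alpha\alpha^*(f)$, and note the two homogeneity features: the map $\alpha \mapsto \alpha^*$ is linear, while the quadratic correction term $\tfrac12\alpha\alpha^*$ is homogeneous of degree $2$ in $\alpha$.

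The key step is to observe that each $E_\alpha$ is in fact a composite of Eichler transformations of the form $\Sigma_{x_i, w, q(w)}$ (as the excerpt already exhibits for the pieces $E_{\alpha_{ij}}$), and then to apply Property~(ii): $\Sigma_{u,v,q(v)}\Sigma_{u,w,q(w)} = \Sigma_{u,\,v+w,\,q(v)+q(w)+h(v,w)}$. Concretely, I would first reduce to a single ``column index'' $i$ (since transformations with different $x_i$ decouple and the identity can be checked factor by factor once additivity in the relevant slot is established), and there write $E_{\alpha}$ restricted to that slot as $\Sigma_{x_i, w_\alpha, q(w_\alpha)}$ where $\alpha \mapsto w_\alpha$ is $A$-linear. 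Then Property~(ii) gives
\[
\Sigma_{x_i, w_{\alpha_1/2}, q(w_{\alpha_1/2})}\,\Sigma_{x_i, w_{\alpha_2}, q(w_{\alpha_2})}\,\Sigma_{x_i, w_{\alpha_1/2}, q(w_{\alpha_1/2})}
= \Sigma_{x_i,\, w_{\alpha_1} + w_{\alpha_2},\, r},
\]
and the bookkeeping of the scalar $r$ works out because $w_{\alpha_1/2} + w_{\alpha_2} + w_{\alpha_1/2} = w_{\alpha_1} + w_{\alpha_2} = w_{\alpha_1 + \alpha_2}$ and the cross term $h(w_{\alpha_1/2}, w_{\alpha_2})$ together with $2\,q(w_{\alpha_1/2}) = \tfrac12 q(w_{\alpha_1})$ reassembles into $q(w_{\alpha_1 + \alpha_2})$ by the polarization identity $q(a+b) = q(a) + q(b) + h(a,b)$ and quadratic homogeneity. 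The right-hand symmetric form of the identity is obtained by interchanging the labels $1$ and $2$.

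An alternative, more elementary route that avoids the reduction to single Eichler factors is to verify the identity directly on each of the three summands $Q$, $P$, $P^*$ of $Q\perp H(P)$ using the explicit formulas: on $P$ all three maps act trivially, on $Q$ one checks $(I + \tfrac{\alpha_1}{2})(I+\alpha_2)(I+\tfrac{\alpha_1}{2}) = I + \alpha_1 + \alpha_2$ using $\alpha_1\alpha_2 = 0$ and $\alpha_i^2 = 0$ as maps $Q \to P$ (which hold because $\alpha_i$ lands in $P$ and is then killed by the next $\alpha$), and on $P^*$ one tracks the linear part $-\alpha^*$ and the quadratic part $-\tfrac12\alpha\alpha^*$ separately, again using the vanishing of the composites and the degree-$2$ homogeneity of the correction term. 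I expect the main obstacle to be purely notational: keeping the order of composition straight and making sure the half-parameters $\alpha_1/2$ (which is where $2$ being invertible is used) interact correctly with the quadratic terms, since a naive expansion produces several cross terms that must be shown to cancel; choosing the Eichler-factor formulation up front is what makes these cancellations automatic rather than a brute-force computation.
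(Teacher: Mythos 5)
Your primary route has a genuine gap at the reduction step. The claim that ``transformations with different $x_i$ decouple,'' i.e.\ that $E_\alpha$ factors into single-index Eichler pieces $E_{\alpha_i}=\Sigma_{x_i,w_i,q(w_i)}$ so the identity can be checked one $i$ at a time, is false: for $\alpha_1,\alpha_2$ supported on different rows $i\neq k$, the $P$-component of $E_{\alpha_1}E_{\alpha_2}$ acting on $f\in P^*$ is $-\tfrac12\alpha_1\alpha_1^*(f)-\tfrac12\alpha_2\alpha_2^*(f)-\alpha_1\alpha_2^*(f)$, whereas $E_{\alpha_1+\alpha_2}$ gives $-\tfrac12\alpha_1\alpha_1^*(f)-\tfrac12\alpha_2\alpha_2^*(f)-\tfrac12\bigl(\alpha_1\alpha_2^*+\alpha_2\alpha_1^*\bigr)(f)$, and $\alpha_1\alpha_2^*\neq\alpha_2\alpha_1^*$ in general (compare $\langle f,x_k\rangle\langle w_i,w_k\rangle x_i$ with $\langle f,x_i\rangle\langle w_k,w_i\rangle x_k$). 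Indeed, if the decoupling held, your argument would prove the stronger two-factor additivity $E(\alpha_1+\alpha_2)=E(\alpha_1)E(\alpha_2)$, which is false; the symmetric three-factor form of the lemma exists precisely to absorb these non-commuting cross terms. Within a single row, Property~(ii) of the Eichler transformations does give additivity (since $q(v)+q(w)+h(v,w)=q(v+w)$), but that does not propagate across rows.

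Your alternative route --- direct verification on the three summands $Q$, $P$, $P^*$ using the explicit formulas and the vanishing of composites such as $\alpha\alpha$ and $\alpha^*\alpha^*$ --- is sound and does prove the lemma. One correction of wording: the cross terms do not cancel, they assemble. On $P^*$ the $Q$-components add up to $-(\alpha_1+\alpha_2)^*(f)$ by linearity of $\ast$, while the $P$-components are $-\tfrac18\alpha_1\alpha_1^*(f)-\tfrac12\alpha_2\alpha_2^*(f)-\tfrac12\alpha_2\alpha_1^*(f)-\tfrac18\alpha_1\alpha_1^*(f)-\tfrac12\alpha_1\alpha_2^*(f)-\tfrac14\alpha_1\alpha_1^*(f)=-\tfrac12(\alpha_1+\alpha_2)(\alpha_1+\alpha_2)^*(f)$; the symmetric placement of the two $\tfrac{\alpha_1}{2}$ factors is exactly what produces both cross terms with coefficient $\tfrac12$. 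This computation is in substance the proof the paper relies on: the paper gives no argument of its own, citing Lemmas~1.2 and~1.3 of \cite{MR1293626}, which amount to precisely this verification.
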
 
The following lemma extends Lemma~1.4 of \cite{MR1293626}.
\begin{lemma} With the notation as above, the group $\EO_A\left(Q\perp
H(P)\right)$ is generated by $E({\alpha_{ij}})$, with {\em $\alpha \in
\hom(Q,P)$} or {\em $\hom\left(Q,P^{\ast}\right)$}; $1\leq i \leq m$
and $1\leq j \leq n$.
\end{lemma}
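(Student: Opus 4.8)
The plan is to prove the two subgroups coincide by checking mutual inclusion. Write $G$ for the subgroup of $\O_A(Q\perp H(P))$ generated by all $E(\alpha_{ij})$ with $\alpha\in\hom(Q,P)$ or $\alpha\in\hom(Q,P^{*})$, $1\le i\le m$, $1\le j\le n$. Since each $\alpha_{ij}$ lies in $\hom(Q,P)$ (resp. $\hom(Q,P^{*})$), every such $E(\alpha_{ij})$ is already one of the defining generators $E_{\alpha}$ (resp. $E_{\beta}^{*}$) of $\EO_A(Q\perp H(P))$, so $G\subseteq\EO_A(Q\perp H(P))$ is immediate. The whole content is the reverse inclusion, i.e. that each $E_{\alpha}$ and each $E_{\beta}^{*}$ lies in $G$.

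First I would record a stability observation. The assignment $\gamma\mapsto\gamma_{ij}=\eta_i\circ p_i\circ\gamma\circ\eta_j\circ p_j$ is $A$-linear in $\gamma$, so $(c\gamma)_{ij}=c\,\gamma_{ij}$ for every $c\in A$; in particular $\gamma_{ij}/2=(\gamma/2)_{ij}$ is again of the shape $\delta_{ij}$ with $\delta\in\hom(Q,P)$ (resp. $\hom(Q,P^{*})$), using that $2$ is invertible. Hence the set $\{E(\delta_{ij})\}$ is stable under the substitution $\delta_{ij}\mapsto\delta_{ij}/2^{k}$ that the Splitting property will force on us, and it contains $E(0)=\mathrm{Id}$.

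Next I would prove, by induction on the number $N$ of nonzero pieces among $\{\alpha_{ij}\}_{i,j}$, that for every $\alpha\in\hom(Q,P)$ the transformation $E_{\alpha}$ is a finite product of transformations $E(\delta_{ij})$, $\delta\in\hom(Q,P)$. For $N\le 1$ there is nothing to prove, since $\alpha=\sum_{i,j}\alpha_{ij}$ reduces to a single $\alpha_{ij}$ (or to $0$). For $N\ge 2$, pick an index pair $(i_0,j_0)$ with $\alpha_{i_0 j_0}\neq 0$ and split off that piece: the map $\alpha'=\alpha-\alpha_{i_0 j_0}$ satisfies $\alpha'=\sum_{(i,j)\neq(i_0,j_0)}\alpha_{ij}$ (because $(\alpha_{i_0 j_0})_{ij}=0$ unless $(i,j)=(i_0,j_0)$), so $\alpha'$ has at most $N-1$ nonzero pieces and $\alpha'_{ij}=\alpha_{ij}$ for $(i,j)\neq(i_0,j_0)$. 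The Splitting property gives
\[
E_{\alpha}=E\!\left(\tfrac{\alpha_{i_0 j_0}}{2}\right)\,E(\alpha')\,E\!\left(\tfrac{\alpha_{i_0 j_0}}{2}\right),
\]
where the outer factors are of the required form by the stability observation and the middle factor is such a product by the inductive hypothesis. This proves the claim for $\hom(Q,P)$; the verbatim same argument, run inside $\hom(Q,P^{*})$ with the Splitting property for $E^{*}$, shows every $E_{\beta}^{*}$ is a product of $E^{*}(\delta_{ij})$'s. Since $\EO_A(Q\perp H(P))$ is generated by the $E_{\alpha}$ and $E_{\beta}^{*}$, we obtain $\EO_A(Q\perp H(P))\subseteq G$, and hence equality.

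The only obstacle is bookkeeping: one must organize the double sum $\alpha=\sum_{i,j}\alpha_{ij}$ into an iterated application of the three-term Splitting identity while tracking the accumulating factors of $1/2$, the key point being that at every stage a scalar multiple of a coordinate map $\delta_{ij}$ is still a coordinate map. No commutator identities from $\S 4$ are needed for this lemma; the genuine inputs are only the Splitting property and the invertibility of $2$.
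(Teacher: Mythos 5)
Your proposal is correct and matches the paper's own argument: the paper likewise writes $\alpha=\sum_{i,j}\alpha_{ij}$ and applies the Splitting property repeatedly, producing the palindromic product $E(\tfrac{\alpha_{11}}{2})\cdots E(\alpha_{mn})\cdots E(\tfrac{\alpha_{11}}{2})$, which is exactly your induction unwound. Your extra remarks (the trivial inclusion $G\subseteq\EO_A(Q\perp H(P))$ and the fact that scalar multiples of coordinate maps are again coordinate maps, using $2\in A^{*}$) are points the paper leaves implicit, but the route is the same.
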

\begin{proof} For $\alpha \in \hom(Q,P)$ or $\hom(Q,P^{\ast})$, we
have $\alpha = {\sum_{i=1}^m\sum_{j=1}^n\alpha_{ij}}$ from the
previous section. By repeated applications of the splitting property, we
have
\begin{align*} E\left(\alpha\right)=
&E\left(\frac{\alpha_{11}}{2}\right)E\left(\frac{\alpha_{21}}{2}\right)\cdots
E\left(\frac{\alpha_{m1}}{2}\right)
E\left(\frac{\alpha_{12}}{2}\right)\cdots
E\left(\frac{\alpha_{m2}}{2}\right)\\ &\cdots
E\left(\frac{\alpha_{(m-1)n}}{2}\right)E\left(\alpha_{mn}\right)E\left(\frac{\alpha_{(m-1)n}}{2}\right)\cdots
E\left(\frac{\alpha_{11}}{2}\right).
\end{align*} This proves the lemma.
\end{proof}
\section{Commutator relations between elementary generators} All the main results in this paper depend on various commutator relations between the generators of $\EO_A(Q\perp H(P))$. The computations for these relations are messy and the general expressions and their detailed proofs are given in a note posted in the arXiv at \cite{aa}. In this section, we state some of the commutator relations which will be used in the sections to follow.
\begin{lemma}\label{l21} Let $Q,P$ be free $A$-modules of rank $n$ and
$m$ respectively$;$ {\em $\alpha,\delta \in  \hom(Q,P)$} and {\em
$\beta,\gamma \in \hom(Q,P^*)$}. Then for any given $i,j,k,l$ with $i \neq k$ for $1\leq i,k \leq m$ and $1 \leq j,l \leq n;$ we have the following commutator relations between the elementary orthogonal
transformations $E_{\alpha_{ij}}, E_{\delta_{kl}}, E_{\beta_{kl}}^{\ast} $ and $E_{\gamma_{kl}}^*:$
\begin{enumerate}[label= \em(\roman{*})]
 \item $[E_{\alpha_{ij}}, E_{\delta_{kl}}] = I +
\delta_{kl}\alpha^{\ast}_{ij} - \alpha_{ij} \delta^{\ast}_{kl}$,
\item $[E_{\alpha_{ij}},E_{\beta_{kl}}^{\ast}] = I-\alpha_{ij}
\beta^{\ast}_{kl}+ \beta_{kl}\alpha^{\ast}_{ij}$,
\item $[E_{\beta_{ij}}^*, E_{\gamma_{kl}}^*] = I + \gamma_{kl}
\beta^{\ast}_{ij} - \beta_{ij} \gamma^{\ast}_{kl}$ .
\end{enumerate}
\end{lemma}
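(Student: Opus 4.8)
The plan is to verify each identity by direct computation on a general element $(z,x,f) \in Q \perp H(P)$, exploiting the explicit formulas for $E_{\alpha_{ij}}$ and $E_{\beta_{ij}}^*$ recorded at the end of $\S 2$, and the fact that the hypothesis $i \neq k$ forces the ``cross terms'' to vanish. Recall that $E_{\alpha_{ij}}$ acts by $(z,x,f) \mapsto (z - \langle f, x_i\rangle w_{ij},\, x + \langle w_{ij}, z\rangle x_i - \langle f, x_i\rangle q(w_{ij}) x_i,\, f)$, and similarly $E_{\beta_{kl}}^*$ modifies only the $Q$- and $P^*$-components. The key structural observation is that $E_{\alpha_{ij}}$ reads off the $x_i$-coefficient of the $P$-component (via $\langle f, x_i\rangle$, since $f \in P^*$ and $x_i$ is a basis vector) and writes into the $x_i$-slot of $P$; whereas $E_{\delta_{kl}}$ reads the $x_k$-coefficient and writes into the $x_k$-slot. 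Since $i \neq k$, the write of one transformation does not disturb the read of the other, so the commutator collapses to a single ``second-order'' correction term. I expect the cleanest route is to compute $E_{\alpha_{ij}} E_{\delta_{kl}}(z,x,f)$ and $E_{\delta_{kl}} E_{\alpha_{ij}}(z,x,f)$ separately, subtract, and identify the discrepancy with the stated operator $\delta_{kl}\alpha_{ij}^* - \alpha_{ij}\delta_{kl}^*$ acting on $(z,x,f)$, using the formulas $\alpha_{ij}^*(z,x,f) = (\langle f,x_i\rangle w_{ij},0,0)$ and $\delta_{kl}(z,x,f) = (0, \langle w'_{kl}, z\rangle x_k, 0)$ from the notation section.

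For part (i): applying $E_{\delta_{kl}}$ first sends $f$ to itself and $x$ to $x + \langle w'_{kl}, z\rangle x_k - \langle f, x_k\rangle q(w'_{kl}) x_k$ and $z$ to $z - \langle f, x_k\rangle w'_{kl}$; then applying $E_{\alpha_{ij}}$ uses $\langle f, x_i\rangle$ (unchanged, as $f$ was untouched) to modify the new $z$ and the new $x$. Because $i \neq k$, the term $\langle f, x_i\rangle w_{ij}$ subtracted from $z$ is independent of the $\delta$-modification pattern, and the only interaction is that $\langle w_{ij}, z\rangle$ must be evaluated at the already-modified $z' = z - \langle f, x_k\rangle w'_{kl}$, producing a cross term $-\langle f, x_k\rangle \langle w_{ij}, w'_{kl}\rangle x_i$ in the $P$-component. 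Reversing the order produces the symmetric cross term $-\langle f, x_i\rangle \langle w'_{kl}, w_{ij}\rangle x_k$. The commutator $E_{\alpha_{ij}} E_{\delta_{kl}} E_{\alpha_{ij}}^{-1} E_{\delta_{kl}}^{-1}$ then leaves $z$ and $f$ fixed and adds to $x$ exactly $\langle w_{ij}, z\rangle$-type contributions that match $\delta_{kl}\alpha_{ij}^*$ and $\alpha_{ij}\delta_{kl}^*$; one checks these coincide with $(I + \delta_{kl}\alpha_{ij}^* - \alpha_{ij}\delta_{kl}^*)(z,x,f)$ by expanding both sides. Parts (ii) and (iii) follow the same template: in (ii), $E_{\alpha_{ij}}$ writes into $P$ while $E_{\beta_{kl}}^*$ writes into $P^*$, so each still reads a slot the other does not overwrite (using $i \neq k$ on the respective basis indices), and the residual term is $\beta_{kl}\alpha_{ij}^* - \alpha_{ij}\beta_{kl}^*$; in (iii), both transformations write into $P^*$-slots $f_i$ and $f_k$ respectively, again non-interfering for $i \neq k$.

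The main obstacle is bookkeeping rather than conceptual: one must track carefully which of the three components each transformation reads from and writes to, and confirm that the quadratic (``$-\frac{1}{2}\alpha_{ij}\alpha_{ij}^*$'') self-correction terms that appear inside each $E$ do not generate spurious cross terms when the other transformation has already modified the relevant coordinate. The condition $i \neq k$ is exactly what guarantees $p_i \circ \delta_{kl} = 0$ and $p_k \circ \alpha_{ij} = 0$, so that, for instance, $\alpha_{ij}\delta_{kl}^* = \alpha_{ij} \circ \delta_{kl}^*$ has image in the $x_i$-line while its input reads the $x_k$-coordinate, and the compositions $\alpha_{ij}\delta_{kl}$, $\delta_{kl}^*\alpha_{ij}^*$, etc., vanish identically --- this is what makes the commutator a bare first-order expression with no higher corrections. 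I would state this vanishing explicitly as a preliminary sublemma, then the three identities reduce to short verifications. The fully detailed expansions, being of the tedious ``expand-and-match'' variety, are deferred to the arXiv note \cite{aa} as indicated; here I would include the computation for (i) in enough detail to make the pattern transparent and remark that (ii) and (iii) are entirely analogous, differing only in which hyperbolic summand coordinate plays the active role.
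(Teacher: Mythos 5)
Your direct expand-and-match verification is correct and is in substance the same argument the paper relies on: the paper's proof simply cites Lemmas 2.1, 2.3 and 2.7 of the companion note \cite{aa}, where precisely these componentwise computations with the maps $\alpha_{ij},\alpha_{ij}^*,\beta_{kl},\beta_{kl}^*$ are carried out. One small point of emphasis (not affecting your argument): in case (i) all the cross-compositions such as $\alpha_{ij}\delta_{kl}$ and $\delta_{kl}^*\alpha_{ij}^*$ vanish for reasons independent of $i\neq k$, since both transformations read only the $Q$- and $P^*$-components and write only into $P$; the hypothesis $i\neq k$ is genuinely needed in case (ii) (and its analogue in (iii)), where compositions like $\alpha_{ij}^*\beta_{kl}$ and $\beta_{kl}^*\alpha_{ij}$ produce the pairing $\langle f_k,x_i\rangle$, which vanishes exactly because $i\neq k$.
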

\begin{proof}
(i), (ii) and (iii) follows from  Lemma~2.1, Lemma~2.3 and Lemma~2.7 of \cite{aa} respectively.
\end{proof}
\begin{corollary}\label{c07} Under the same assumptions as in Lemma~\ref{l21} and for $a,b,c,d \in A$, we have the following$:$
\begin{enumerate}[label= \em(\roman{*})]
\item $\left[E_{a\alpha_{ij}},
E_{b\delta_{kl}}\right]= \left[E_{c\alpha_{ij}}, E_{d\delta_{kl}} \right]$ if
$ab=cd$, 
\item
$\left[E_{a\alpha_{ij}},E_{b\beta_{kl}}^{\ast}\right]=\left[E_{c\alpha_{ij}},E_{d\beta_{kl}}^{\ast}\right]$
if $ab=cd$, 
\item $\left[E_{a\beta_{ij}}^*,
E_{b\gamma_{kl}}^*\right]=\left[E_{c\beta_{ij}}^*,
E_{d\gamma_{kl}}^*\right]$ if $ab=cd$.
\end{enumerate}
\end{corollary}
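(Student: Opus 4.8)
The plan is to reduce each displayed identity in Corollary~\ref{c07} to the corresponding identity in Lemma~\ref{l21} by tracking how scalars can be absorbed into the maps $\alpha_{ij},\delta_{kl},\beta_{kl},\gamma_{kl}$ and their duals. First I would observe that for $a\in A$ the map $a\alpha_{ij}$ is again of the form ``$ij$-th component of a homomorphism in $\hom(Q,P)$'', so Lemma~\ref{l21} applies verbatim with $\alpha_{ij}$ replaced by $(a\alpha)_{ij}=a\,\alpha_{ij}$ and $\delta_{kl}$ by $(b\delta)_{kl}=b\,\delta_{kl}$. The key bookkeeping fact is how the $*$-operation interacts with scalars: since $\alpha^{*}=d_{B_q}^{-1}\circ\alpha^{t}$ and both $d_{B_q}^{-1}$ and transposition are additive, one has $(a\alpha_{ij})^{*}=a\,\alpha_{ij}^{*}$, and likewise $(b\delta_{kl})^{*}=b\,\delta_{kl}^{*}$, $(b\beta_{kl})^{*}=b\,\beta_{kl}^{*}$, etc. Hence, applying Lemma~\ref{l21}(i) to $a\alpha$ and $b\delta$ gives
\begin{align*}
[E_{a\alpha_{ij}},E_{b\delta_{kl}}]
&= I+(b\delta_{kl})(a\alpha_{ij})^{*}-(a\alpha_{ij})(b\delta_{kl})^{*}\\
&= I+ab\,\delta_{kl}\alpha_{ij}^{*}-ab\,\alpha_{ij}\delta_{kl}^{*}.
\end{align*}
The entire right-hand side depends on $a$ and $b$ only through the product $ab$, so if $ab=cd$ the same computation with $c,d$ in place of $a,b$ yields the identical operator, proving (i). Parts (ii) and (iii) follow by the same argument, using Lemma~\ref{l21}(ii) and (iii) respectively, together with the scalar-homogeneity of the $*$-operation for $\beta$- and $\gamma$-type maps; in each case the commutator simplifies to $I$ plus two terms each linear in $ab$ (resp.\ $cd$), so equality of the products forces equality of the commutators.

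There is one point that needs a word of care: I should make sure that $a\alpha_{ij}$ really is the $(i,j)$-block of $a\alpha$ in the sense of the Notation section, i.e.\ that scalar multiplication commutes with the projection/inclusion idempotents $\eta_i p_i$ and $\eta_j p_j$. This is immediate because $p_i,\eta_i$ are $A$-linear, so $(a\alpha)_{ij}=\eta_i p_i\circ(a\alpha)\circ\eta_j p_j=a(\eta_i p_i\circ\alpha\circ\eta_j p_j)=a\,\alpha_{ij}$, and similarly for the dual side using the formula $\alpha_{ij}^{*}=\eta_j p_j\circ\alpha^{*}\circ\eta_i p_i$. Consequently all the hypotheses of Lemma~\ref{l21} (in particular $i\neq k$) are inherited unchanged, and the substitution is legitimate.

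I do not anticipate a genuine obstacle here: the corollary is essentially the statement that the commutator formulas of Lemma~\ref{l21} are \emph{bilinear} in the scalar parameters attached to the two generators, so that only the product of the two scalars survives. The only thing to be vigilant about is sign and ordering conventions — making sure that in, say, (ii) the term $\beta_{kl}\alpha_{ij}^{*}$ indeed picks up the factor $ab$ and not $a^{2}$ or $b^{2}$ — but this is transparent from the explicit expressions once scalar-homogeneity of $(\cdot)^{*}$ is in hand. I would therefore keep the written proof short: invoke Lemma~\ref{l21}, note $(a\alpha_{ij})^{*}=a\,\alpha_{ij}^{*}$, and read off that each commutator depends on the scalars only via their product.
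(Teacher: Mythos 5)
Your argument is correct. Note, however, that the paper itself does not prove this corollary in-house: its ``proof'' is a one-line citation to Corollaries~2.2, 2.5 and 2.8 of the companion arXiv note \cite{aa}, where the scalar bookkeeping is carried out. What you do instead is derive the corollary directly from Lemma~\ref{l21} as stated in this paper: since $a\alpha_{ij}=(a\alpha)_{ij}$ (the idempotents $\eta_ip_i$, $\eta_jp_j$ are $A$-linear) and $(a\alpha_{ij})^{*}=a\,\alpha_{ij}^{*}$ (the characterizing relation $(f\circ\alpha)(z)=B_q(\alpha^{*}(f),z)$ is $A$-bilinear), Lemma~\ref{l21}(i) applied to $a\alpha$, $b\delta$ gives $[E_{a\alpha_{ij}},E_{b\delta_{kl}}]=I+ab\,\delta_{kl}\alpha_{ij}^{*}-ab\,\alpha_{ij}\delta_{kl}^{*}$, which visibly depends on $a,b$ only through $ab$; parts (ii) and (iii) go the same way, and the hypothesis $i\neq k$ is inherited unchanged. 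This is almost certainly the content of the cited corollaries in \cite{aa}, so your route is not substantively different, but it has the virtue of being self-contained within the present paper: it exposes that the corollary is nothing more than bilinearity of the commutator formulas of Lemma~\ref{l21} in the two scalar parameters, rather than an independent computation. No gap.
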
 
\begin{proof}
(i), (ii) and (iii) follows from Corollary~2.2, Corollary~2.5 and Corollary~2.8 of \cite{aa} respectively.
\end{proof}
\begin{lemma}\label{l22} Let $Q,P$ be free $A$-modules of rank $n$ and
$m$ respectively$;$ {\em $\alpha,\delta \in  \hom(Q,P)$} and {\em
$\beta,\gamma \in \hom(Q,P^*)$}. For any given $i,j,k,l,p,q$ with
$i \neq k$, $k \neq p$ for $1 \leq i,k,p \leq m$ and $1 \leq j,l,q \leq
n;$  we have the following commutator relations$:$
\begin{enumerate}[label= \em(\roman{*})]
\item $\left[E_{\beta_{ij}}^*,\left[E_{\alpha_{kl}},E_{\delta_{pq}}\right]\right] =
E_{\lambda_{kj}}\left[E_{\beta_{ij}}^*,E_\frac{\lambda_{kj}}{2}\right]$,
\item $\left[E_{\alpha_{ij}},\left[E_{\delta_{kl}},E_{\beta_{pq}}^*\right]\right] =
E_{\mu_{kj}}\left[E_{\alpha_{ij}},E_\frac{\mu_{kj}}{2}\right]$,
\item $\left[E_{\beta_{ij}}^*,\left[E_{\gamma_{kl}}^*,E_{\alpha_{pq}}\right]\right] =
E_{\nu_{kj}}^*\left[ E_{\beta_{ij}}^*,E_\frac{\nu_{kj}}{2}^* \right]$, 
\item $\left[E_{\alpha_{ij}},\left[E_{\beta_{kl}}^*,E_{\gamma_{pq}}^*\right]\right] =
E_{\xi_{kj}}^*\left[E_{\alpha_{ij}},E_\frac{\xi_{kj}}{2}^*\right]$, 
\end{enumerate}
where \[\lambda_{kj}=\alpha_{kl}\delta_{pq}^*\beta_{ij},\;\mu_{kj}=\delta_{kl}\beta_{pq}^*\alpha_{ij}
\in \hom(Q,P), \]
\[\nu_{kj}=\gamma_{kl}\alpha_{pq}^*\beta_{ij} \textnormal{ and }
\xi_{kj}=\beta_{kl}\gamma_{pq}^*\alpha_{ij}\in \hom(Q,P^*).\]
\end{lemma}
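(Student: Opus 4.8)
The plan is to prove Lemma~\ref{l22} by a direct but carefully organized computation, reducing each of the four identities to the basic commutator relations of Lemma~\ref{l21} together with the splitting property (Lemma~1.3 above) and the scaling corollary, Corollary~\ref{c07}. All four parts are formally parallel — they only differ by whether the ``outer'' generator and the ``inner'' commutator are built from maps into $P$ or $P^*$ — so I would carry out part (i) in detail and then indicate that (ii), (iii), (iv) follow by the same bookkeeping after interchanging the roles of $\alpha$-type and $\beta$-type generators (and using the corresponding item of Lemma~\ref{l21}).

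For part (i): first apply Lemma~\ref{l21}(i) to rewrite the inner commutator $[E_{\alpha_{kl}},E_{\delta_{pq}}]$ as $I+\delta_{pq}\alpha_{kl}^{\ast}-\alpha_{kl}\delta_{pq}^{\ast}$, and note that since $k\neq p$ this element is again of a tractable form. The key structural observation to extract is that conjugating $E_{\beta_{ij}}^{\ast}$ (or rather taking its commutator with this element) only ``sees'' the composite $\alpha_{kl}\delta_{pq}^{\ast}\beta_{ij}$, which by construction lands in $\hom(Q,P)$ with image in the $k$-th coordinate of $P$ and is fed by the $j$-th coordinate of $Q$ — hence it is exactly an elementary map of type $\lambda_{kj}$. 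Once this is identified, the remaining manipulation is to verify the asserted equality $E_{\lambda_{kj}}[E_{\beta_{ij}}^{\ast},E_{\lambda_{kj}/2}]$ by expanding the right-hand side via the definitions of $E_{\lambda_{kj}}$ and $E_{\beta_{ij}}^{\ast}$ in the Notation section, using $q(\lambda_{kj}(\text{-}))=\frac12\langle\lambda_{kj}(\text{-}),\lambda_{kj}(\text{-})\rangle$, and matching coordinates. Throughout, the index constraints $i\neq k$, $k\neq p$ guarantee that the various ``off-diagonal'' idempotents $\eta_i p_i$ annihilate each other in the right places, so that the cross terms which would otherwise appear vanish; isolating precisely which products of projections kill which terms is the part that has to be done with care.

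I would organize the computation so that the heavy lifting is quoted from the companion arXiv note \cite{aa}, exactly as the paper does for Lemma~\ref{l21} and Corollary~\ref{c07}: state that (i), (ii), (iii), (iv) follow respectively from the corresponding lemmas there (presumably the analogues of Lemma~2.1, 2.3, 2.7 combined with the splitting identity), and reproduce here only the identification of $\lambda_{kj},\mu_{kj},\nu_{kj},\xi_{kj}$ and a one-line indication of why each lies in the stated $\hom$-space. This keeps the proof in the paper short while remaining honest about where the verification lives.

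The main obstacle I anticipate is purely combinatorial: keeping the six indices $i,j,k,l,p,q$ straight and tracking which compositions survive. In particular, on the left-hand side the inner object $[E_{\alpha_{kl}},E_{\delta_{pq}}]$ is not itself elementary — it is $I+\delta_{pq}\alpha_{kl}^{\ast}-\alpha_{kl}\delta_{pq}^{\ast}$, a genuine two-term perturbation — so taking its commutator with $E_{\beta_{ij}}^{\ast}$ produces several terms, and the claim is that, modulo the index hypotheses, all but the ``$\lambda_{kj}$-part'' collapse, leaving the Eichler-type product on the right. Verifying that collapse — i.e. that the would-be obstruction terms are exactly cancelled, rather than merely of lower order — is the crux; everything else is substitution into the explicit formulas for $E_{\alpha_{ij}}$ and $E_{\beta_{ij}}^{\ast}$ recorded above, plus repeated use of the splitting property to regroup half-steps.
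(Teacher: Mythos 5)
Your proposal is correct and takes essentially the same route as the paper: the paper's own proof is a one-line citation of the companion note \cite{aa} (Lemmas~3.1, 3.3, 3.5 and 3.7 there, rather than the 2.x numbering you guessed), which is exactly the organization you settle on, and your sketch of the underlying coordinate computation — rewrite the inner commutator via Lemma~\ref{l21}, identify the surviving composite $\alpha_{kl}\delta_{pq}^{\ast}\beta_{ij}=\lambda_{kj}$ (and its analogues), and match both sides using the explicit formulas and the index constraints $i\neq k$, $k\neq p$ — is consistent with what is carried out in \cite{aa}.
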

\begin{proof}
(i), (ii), (iii) and (iv) follows from Lemma~3.1, Lemma~3.3, Lemma~3.5 and Lemma~3.7 of \cite{aa} respectively.
\end{proof}
\begin{corollary}\label{c08} Under the same assumptions as in Lemma~
\ref{l22} and for $a,b,c,d,e,f \in A$, we have the following$:$
\begin{enumerate}[label=\em(\roman{*})]
\item $\left[E_{a\beta_{ij}}^*,\left[E_{b\alpha_{kl}},E_{c\delta_{pq}}\right]\right]$ =
$\left[E_{d\beta_{ij}}^*,\left[E_{e\alpha_{kl}},E_{f\delta_{pq}}\right]\right]$ if $abc=def$
and $a^2bc=d^2ef$, 
\item $\left[E_{a\alpha_{ij}},\left[E_{b\delta_{kl}},E_{c\beta_{pq}}^*\right]\right] =
\left[E_{d\alpha_{ij}},\left[E_{e\delta_{kl}},E_{f\beta_{pq}}^*\right]\right]$ if $abc=def$
and $a^2bc=d^2ef$, 
\item $\left[E_{a\beta_{ij}}^*,\left[E_{b\gamma_{kl}}^*,E_{c\alpha_{pq}}\right]\right]=\left[E_{d\beta_{ij}}^*,\left[E_{e\gamma_{kl}}^*,E_{f\alpha_{pq}}\right]\right]$
if $abc=def$ and $a^2bc=d^2ef$, 
\item $\left[E_{a\alpha_{ij}},\left[E_{b\beta_{kl}}^*,E_{c\gamma_{pq}}^*\right]\right]=\left[E_{d\alpha_{ij}},\left[E_{e\beta_{kl}}^*,E_{f\gamma_{pq}}^*\right]\right]$
if $abc=def$ and $a^2bc=d^2ef$.  
\end{enumerate}
\end{corollary}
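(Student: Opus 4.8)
The plan is to derive Corollary~\ref{c08} directly from the nested--commutator identities of Lemma~\ref{l22}, by tracking how the scalars $a,b,c$ propagate, and then invoking Corollary~\ref{c07} for the single residual commutator that survives. I will carry out part~(i) in detail and indicate the (purely cosmetic) changes needed for (ii)--(iv).

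For (i), I would apply Lemma~\ref{l22}(i) with $\alpha,\delta,\beta$ replaced by $b\alpha,c\delta,a\beta$. Writing $\Lambda_{kj}=(b\alpha)_{kl}(c\delta)_{pq}^{\ast}(a\beta)_{ij}\in\hom(Q,P)$ for the homomorphism that plays the role of $\lambda_{kj}$ for these scaled maps, this gives
\[
\bigl[E_{a\beta_{ij}}^{\ast},[E_{b\alpha_{kl}},E_{c\delta_{pq}}]\bigr]=E_{\Lambda_{kj}}\bigl[E_{a\beta_{ij}}^{\ast},E_{\Lambda_{kj}/2}\bigr].
\]
The next step is to note that $\Lambda_{kj}=abc\,\lambda_{kj}$: the operations $\alpha\mapsto\alpha_{kl}$, $\delta\mapsto\delta_{pq}$, $\beta\mapsto\beta_{ij}$ are $A$-linear, the passage $\delta\mapsto\delta^{\ast}=d_{B_q}^{-1}\circ\delta^{t}$ is $A$-linear, and $A$ is commutative. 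Hence the right--hand side equals $E_{abc\,\lambda_{kj}}\bigl[E_{a\beta_{ij}}^{\ast},E_{(abc/2)\lambda_{kj}}\bigr]$, and it suffices to show that each factor depends on $(a,b,c)$ only through $abc$ and $a^{2}bc$. The first factor manifestly depends only on $abc$. For the second, I would observe that $\beta_{ij}$ has first index $i$ while $\lambda_{kj}\in\hom(Q,P)$ has first index $k$, with $i\neq k$ by hypothesis, so Corollary~\ref{c07}(ii) applies to the reversed commutator $[E_{(abc/2)\lambda_{kj}},E_{a\beta_{ij}}^{\ast}]$ (whose inverse is the one we want) and shows it depends only on the product $(abc/2)\cdot a=a^{2}bc/2$, i.e.\ only on $a^{2}bc$. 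Consequently, whenever $abc=def$ and $a^{2}bc=d^{2}ef$, both factors are unchanged under $(a,b,c)\mapsto(d,e,f)$, which proves (i).

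For (ii), (iii) and (iv) I would run the identical argument starting from Lemma~\ref{l22}(ii), (iii), (iv): the scaled nested commutator becomes, respectively, $E_{abc\,\mu_{kj}}\bigl[E_{a\alpha_{ij}},E_{(abc/2)\mu_{kj}}\bigr]$, $E_{abc\,\nu_{kj}}^{\ast}\bigl[E_{a\beta_{ij}}^{\ast},E_{(abc/2)\nu_{kj}}^{\ast}\bigr]$ and $E_{abc\,\xi_{kj}}^{\ast}\bigl[E_{a\alpha_{ij}},E_{(abc/2)\xi_{kj}}^{\ast}\bigr]$, since $\mu_{kj},\nu_{kj},\xi_{kj}$ are each trilinear in the three maps and so scale by $abc$. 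The residual commutators are then of the types handled by Corollary~\ref{c07}(i), (iii) and (ii) respectively, and in each case depend only on $a^{2}bc$ once one checks---exactly as in (i)---that the two relevant first indices $i$ and $k$ are distinct. The conclusion follows.

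I do not expect a genuine obstacle here; the argument is essentially bookkeeping of scalars in the commutative ring $A$. The two points needing care are (a) the $A$-linearity of the $\ast$-operation, which is what makes the rescaling of $\delta$ (or $\beta,\gamma$) pass to $\delta^{\ast}$ and hence makes the inner homomorphism scale by exactly $abc$, and (b) the verification of the index condition $i\neq k$ required to invoke Corollary~\ref{c07} for the residual commutators---and the latter is immediate from the standing hypotheses of Lemma~\ref{l22}.
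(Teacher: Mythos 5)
Your argument is correct, and it is sound at the two points that need care: the trilinear scaling $(b\alpha)_{kl}(c\delta)^{*}_{pq}(a\beta)_{ij}=abc\,\lambda_{kj}$ (using $A$-linearity of $\mu\mapsto\mu_{kl}$ and of $\mu\mapsto\mu^{*}$), and the application of Corollary~\ref{c07} to the residual commutator after reversing it (legitimate, since $[x,y]^{-1}=[y,x]$ and the standing hypothesis $i\neq k$ of Lemma~\ref{l22} is exactly the index condition Corollary~\ref{c07} needs), which isolates the dependence on $a\cdot\frac{abc}{2}$, i.e.\ on $a^{2}bc$. The only difference from the paper is organizational rather than mathematical: the paper proves Corollary~\ref{c08} simply by citing Corollaries~3.2, 3.4, 3.6 and 3.8 of the arXiv companion note \cite{aa}, where this scalar bookkeeping is carried out, whereas you reconstruct that bookkeeping inside the paper from Lemma~\ref{l22} and Corollary~\ref{c07}; this buys a self-contained derivation at the cost of relying on the in-paper statements (whose own proofs are likewise deferred to \cite{aa}), and it matches in spirit the intended argument.
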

\begin{proof}
(i), (ii), (iii) and (iv) follows from Corollary~3.2, Corollary~3.4, Corollary~3.6 and Corollary~3.8 of \cite{aa} respectively.
\end{proof}
\section{Local-Global Principle for Roy's Elementary Orthogonal
Transformations} In this section, we establish that $EO_{A[X]}(M[X])$,
where $M = Q \perp H(P)$ such that $Q$ and $P$ are free modules of
rank $n$ and $m$ respectively, satisfies the Local-Global principle. 
\begin{theorem}[\textbf{Local-Global Principle}]\label{t11}
Let $\theta(X) \in \O_{A[X]}(M[X])$. If for
all maximal ideals $\mathfrak{m}$ of $A$, $\theta(X)_\mathfrak{m} \in
\O_{A_{\mathfrak{m}}}(M_{\mathfrak{m}})\cdot \EO_{A_{\mathfrak{m}}[X]}(M_\mathfrak{m}[X])$ , then $\theta(X) \in
\O_A(M)\cdot \EO_{A[X]}(M[X])$.
\end{theorem}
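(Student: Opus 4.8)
The plan is to follow Quillen's patching strategy adapted to $K_1$-type problems, as in Suslin's work \cite{MR0472792}. Set $\theta = \theta(X)$. The standard reduction is to show that the set
\[
\mathfrak{a} = \{\, s \in A \mid \theta(X)_s \in \O_{A_s}(M_s)\cdot \EO_{A_s[X]}(M_s[X]) \text{ ``uniformly'', i.e. } \theta(bX)\theta(0)^{-1} \text{ splits appropriately} \,\}
\]
is an ideal of $A$; the local hypothesis then forces $\mathfrak{a} = A$, and evaluating at a suitable element gives the conclusion. More precisely, I would first replace $\theta(X)$ by $\theta(X)\theta(0)^{-1}$, so that we may assume $\theta(0) = \mathrm{Id}$ and aim to show $\theta(X) \in \EO_{A[X]}(M[X])$. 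The crux is the \emph{dilation principle}: if $\theta(X)_f \in \EO_{A_f[X]}(M_f[X])$ for some non-nilpotent $f \in A$, then there exists an integer $k$ such that for all $b \in (f^k)$, the automorphism $\theta(bX) \in \EO_{A[X]}(M[X])$, and moreover $\theta(bX)$ depends ``polynomially'' on $b$ in a way that allows the splitting argument below.

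The key technical input is a \emph{local-global / Quillen-style splitting for elementary generators}: given $\varepsilon(X) \in \EO_{A_f[X]}(M_f[X])$ with $\varepsilon(0) = \mathrm{Id}$, one wants to show $\varepsilon(bX)$ is realized in $\EO_{A[X]}(M[X])$ for $b$ in a high enough power of $f$. This is where the commutator calculus of \S4 enters. Writing $\varepsilon(X)$ as a product of generators $E(\alpha_{ij}(X))$ with entries in $A_f[X]$, one clears denominators using Corollary~\ref{c07} and Corollary~\ref{c08}: the point of those corollaries is precisely that a commutator $[E_{a\alpha_{ij}}, E_{b\delta_{kl}}]$ depends only on the product $ab$ (and similarly the nested commutators depend only on $abc$ and $a^2bc$), so one can absorb a denominator $f^{-N}$ appearing in one factor by multiplying another factor by $f^N$ without changing the commutator. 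Combined with Lemma~\ref{l22}, which expresses a commutator of a generator with a commutator of two generators again as a single generator times a commutator, one gets enough flexibility to push an arbitrary elementary product with $A_f$-entries, evaluated at $bX$, into $\EO_{A[X]}(M[X])$ once $b \in (f^k)$ for $k \gg 0$. The splitting property (Lemma~1.3) is used throughout to decompose and recombine generators at argument $\tfrac{X}{2}$ and to merge $E(\alpha_1)E(\alpha_2)$-type products.

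With the dilation principle in hand, the endgame is the usual Noetherian patching. For each maximal ideal $\mathfrak{m}$, the hypothesis gives $\theta(X)_{\mathfrak m} = \sigma_{\mathfrak m}\cdot \varepsilon_{\mathfrak m}(X)$ with $\sigma_{\mathfrak m} \in \O_{A_{\mathfrak m}}(M_{\mathfrak m})$ and $\varepsilon_{\mathfrak m}(X) \in \EO_{A_{\mathfrak m}[X]}(M_{\mathfrak m}[X])$; since $\theta(0) = \mathrm{Id}$ we may arrange $\sigma_{\mathfrak m} = \varepsilon_{\mathfrak m}(0)^{-1}$, i.e. $\theta(X)_{\mathfrak m}\theta(0)_{\mathfrak m}^{-1} = \varepsilon_{\mathfrak m}(X)\varepsilon_{\mathfrak m}(0)^{-1} \in \EO_{A_{\mathfrak m}[X]}(M_{\mathfrak m}[X])$. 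Such an identity descends to a principal localization $A_{f}$ with $f \notin \mathfrak m$, since $M$ and the generators are finitely presented. Applying the dilation principle at finitely many $f_1, \dots, f_r$ with $(f_1, \dots, f_r) = A$ (possible by quasi-compactness of $\max A$), we obtain integers $k_i$ and, after choosing $b_1, \dots, b_r$ with $b_i \in (f_i^{k_i})$ and $\sum b_i = 1$, the telescoping identity
\[
\theta(X) = \theta(X)\theta(0)^{-1} = \prod_{i=1}^{r} \theta\!\big((b_i + b_{i+1} + \cdots + b_r)X\big)\,\theta\!\big((b_{i+1} + \cdots + b_r)X\big)^{-1}
\]
exhibits $\theta(X)$ as a product of factors each of which, by construction, lies in $\EO_{A[X]}(M[X])$; hence $\theta(X) \in \O_A(M)\cdot\EO_{A[X]}(M[X])$ as claimed. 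The main obstacle is the second paragraph — making the commutator bookkeeping precise enough that clearing denominators genuinely produces elements of $\EO_{A[X]}(M[X])$ rather than just of $\EO_{A_f[X]}(M_f[X])$; this is exactly why the paper isolates Corollaries~\ref{c07} and~\ref{c08} (the ``$ab = cd$'' and ``$abc = def,\ a^2bc = d^2ef$'' invariance) and defers the full identities to \cite{aa}.
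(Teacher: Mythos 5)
Your overall strategy is the paper's: reduce to $\theta(0)=\mathrm{Id}$ so that the goal becomes $\theta(X)\in\EO_{A[X]}(M[X])$ (the paper's Remark before Theorem~\ref{t00}), prove a dilation lemma by writing a localized elementary element as a product of conjugates $\gamma\,E(X\alpha'(X))\,\gamma^{-1}$ and clearing the denominators $s^{-r}$ through the commutator identities of Corollaries~\ref{c07} and~\ref{c08} (this is exactly the content of Lemmas~\ref{l08}, \ref{l09} and \ref{l10}), and finish by quasi-compactness of $\max A$ and a telescoping product. Up to that point your sketch matches the paper's proof in both structure and in which commutator facts carry the load.

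There is, however, a genuine gap at the very last step as you have written it. Your dilation statement only gives $\theta(bX)\in\EO_{A[X]}(M[X])$ for $b\in(f^k)$, but the telescoping factors you need are of the form $\theta\bigl((b_i+c)X\bigr)\theta(cX)^{-1}$ with $c=b_{i+1}+\cdots+b_r\neq 0$, and these are not covered ``by construction''; the clause that $\theta(bX)$ ``depends polynomially on $b$ in a way that allows the splitting'' is precisely the point that needs proof. The standard repair, and the one the paper uses in the proof of Theorem~\ref{t00}, is the two-variable trick: set $\kappa(X,Y)=\theta(X+Y)\theta(Y)^{-1}\in\O_{A[Y][X]}(M[Y][X])$, note $\kappa(0,Y)=\mathrm{Id}$ and that $\kappa(X,Y)_{s_{\mathfrak m}}$ is elementary (substitute $X\mapsto X+Y$ and $X\mapsto Y$ in the elementary expression for $\theta(X)_{s_{\mathfrak m}}$), and then apply the Dilation Lemma \emph{with $A[Y]$ as the base ring} to get $\kappa(b_{\mathfrak m}X,Y)\in\EO_{A[X,Y]}(M[X,Y])$; specializing $Y\mapsto T_i=\sum_{k>i}d_kb_{\mathfrak m_k}X$ then shows each telescoping factor lies in $\EO_{A[X]}(M[X])$. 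Without introducing the extra variable $Y$ (or an equivalent device) your argument does not close, so you should state the dilation lemma over an arbitrary base ring and invoke it for $A[Y]$, exactly as the paper does.
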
 Before beginning the proof we make the following remark.
\begin{remark} Replacing $\theta(X)$ by $\theta(0)^{-1}\theta(X)$, we
may assume that $\theta(0) = 1$. Further for any ring $A$, $\theta(X)
\in \O_A(M)\EO_{A[X]}(M[X])$ and $\theta(0) = Id$ implies that $\theta(X)
\in \EO_{A[X]}(M[X])$.\\ 
For, if $\theta(X) = \gamma \varepsilon(X), \gamma \in
\O_A(M)$ and $\varepsilon(X) \in \EO_{A[X]}(M[X])$, then $\gamma = \theta(0)
\varepsilon(0)^{-1} = \varepsilon(0)^{-1}$. 
\end{remark} In view of this remark we can rewrite the
Theorem~\ref{t11} as follows:
\begin{theorem}[\textbf{Local-Global Principle}]\label{t00}
Let $\theta(X) \in \O_{A[X]}(M[X])$ with $\theta(0) = Id$. If for all maximal ideals $\mathfrak{m}$ of $A$, $\theta(X)_\mathfrak{m} \in EO_{A_{\mathfrak{m}}[X]}(M_\mathfrak{m}[X])$,
 then $\theta(X) \in \EO_{A[X]}(M[X])$.
\end{theorem}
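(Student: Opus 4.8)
The plan is to follow the standard Quillen–Suslin localization strategy, adapted to Roy's elementary orthogonal group via the commutator calculus of Section~4. Since $\theta(X) \in \EO_{A[X]}(M[X])$ after localization at every maximal ideal and $\theta(0) = Id$, the key is to show that the set of denominators needed to descend from the local statement to the global one forms an ideal equal to $A$. Concretely, I would introduce the ``Quillen ideal''
\[
\mathfrak{q} = \{ s \in A \mid \theta(bX) \theta(cX)^{-1} \in \EO_{A_s[X]}(M_s[X]) \text{ whenever } b \equiv c \!\!\!\pmod{sA} \},
\]
or more precisely the set of $s$ for which $\theta_s(X)$ ``splits'' in the sense that $\theta_s((a+sb)X)\theta_s(aX)^{-1}$ lies in $\EO$ over $A_s[X]$ for all $a \in A$, $b \in A$. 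The goal is to prove this is an ideal of $A$ which, by the local hypothesis, meets every maximal ideal, hence equals $A$; specializing $X \mapsto 1$ then yields $\theta(X) \in \EO_{A[X]}(M[X])$.

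First I would establish the local dilation principle: if $\theta(X)_{\mathfrak m} \in \EO_{A_{\mathfrak m}[X]}(M_{\mathfrak m}[X])$, then since $\theta(0)=Id$, standard arguments (writing $\theta$ over $A_{\mathfrak m}$ as a product of elementary generators and using that each $E_{\alpha_{ij}}(X)$ with $\alpha_{ij}(0)=0$ behaves well under the substitution $X \mapsto sX$) give an $s \notin \mathfrak m$ with $s \in \mathfrak{q}$. This uses the splitting property (Lemma~2.9 in the excerpt, labelled ``Splitting property'') together with Lemma~3.3 to reduce everything to the generators $E(\alpha_{ij})$. Next I would prove $\mathfrak{q}$ is an ideal: closure under multiplication by $A$ is routine, and closure under addition is precisely where the commutator relations of Lemma~\ref{l21}, Corollary~\ref{c07}, Lemma~\ref{l22}, and Corollary~\ref{c08} enter. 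Given $s, t \in \mathfrak{q}$, one works over $A_{st}$ (or over a suitable patch) and must express $\theta((a + (s+t)b)X)\theta(aX)^{-1}$ as a product of two factors, one that becomes elementary over $A_s[X]$ and one over $A_t[X]$; the commutators $[E_{a\alpha_{ij}}, E_{b\delta_{kl}}]$ depending only on the product $ab$ (Corollary~\ref{c07}) and the nested-commutator invariance under $abc = def$, $a^2bc = d^2ef$ (Corollary~\ref{c08}) are exactly the tools that let one absorb the cross terms and relocate powers of $s$ and $t$ between the two factors.

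The main obstacle I expect is the additivity of $\mathfrak{q}$, i.e. the step that replaces Suslin's ``theory of generic elementary forms'' with the hands-on commutator yoga. The difficulty is that Roy's generators $E_{\alpha_{ij}}$ satisfy only a quadratic (not linear) splitting law — $E(\alpha_1 + \alpha_2) = E(\alpha_1/2)E(\alpha_2)E(\alpha_1/2)$ — so conjugating a long word over $A_{st}[X]$ by a change of variable $X \mapsto uX$ produces generators whose ``coefficients'' are polynomials in $u$ with intertwined cross-terms, and one must show these cross-terms are themselves elementary with controlled denominators. This is handled by a Noetherian induction reducing a single generator $E_{\alpha_{ij}}(X)$ to the case where $\alpha$ has a single nonzero matrix entry, then invoking the precise commutator identities above (and their $abc=def$, $a^2bc=d^2ef$ refinements, which are designed precisely so that the quadratic asymmetry $E(\alpha/2)$ does not obstruct relocating denominators). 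Once additivity is in hand, the finiteness condition from the generalized dimension function / Noetherian hypothesis guarantees $\mathfrak{q}$ is not contained in any maximal ideal, so $\mathfrak{q} = A$, and the substitution $X = 1$ finishes the proof.
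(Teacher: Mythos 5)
Your overall architecture (localize, dilate, patch via a Quillen-type ideal or partition of unity) is the right one, but as written there is a genuine gap in the central step. You define $\mathfrak{q}$ by membership of $\theta(bX)\theta(cX)^{-1}$ in $\EO_{A_s[X]}(M_s[X])$, i.e.\ over the localization, and then claim additivity by factoring $\theta((a+(s+t)b)X)\theta(aX)^{-1}$ into one piece elementary over $A_s[X]$ and one elementary over $A_t[X]$. Those two factors live in different localizations, and nothing lets you conclude that their product lies in $\EO_{A_{s+t}[X]}(M_{s+t}[X])$: asserting that a product of locally elementary pieces is elementary over the bigger ring is precisely the local-global statement being proved, so the additivity argument is circular. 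What repairs it is the strong form of the Dilation Lemma, which your sketch never states: if $\theta(0)=Id$ and $\theta_s(X)\in\EO_{A_s[X]}(M_s[X])$, then $\theta(bX)\in\EO_{A[X]}(M[X])$ --- over the \emph{global} ring --- for all $b\in(s)^dA$ with $d\gg0$ (Lemma~\ref{l10}). With that in hand one should define the ideal by global membership (say $s\in\mathfrak{q}$ iff $\theta((a+sb)X)\theta(aX)^{-1}\in\EO_{A[X]}(M[X])$ for all $a,b$), for which additivity is a one-line telescoping, while the dilation lemma gives $s_{\mathfrak m}^N\in\mathfrak{q}$ with $s_{\mathfrak m}\notin\mathfrak m$; equivalently, as the paper does, skip the ideal and telescope $\theta(X)$ directly using $\kappa(X,Y)=\theta(X+Y)\theta(Y)^{-1}$, the dilation lemma applied over the base ring $A[Y]$, and a partition of unity $\sum_i d_i b_{\mathfrak m_i}=1$.

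A related misplacement: you locate the hard work (the commutator calculus) in the additivity of $\mathfrak{q}$, but in the correct organization the patching/additivity step is soft; the commutator identities (Corollaries~\ref{c07} and~\ref{c08}) are consumed entirely in proving the dilation lemma, namely in Lemma~\ref{l09}, which shows that conjugating $E(s^d x Y_{kl})$ by $E\bigl(\tfrac{a}{s^r}X_{ij}\bigr)$ is again a product of generators $E(s^{d_t}x_t Z_{p_tq_t})$ with strictly positive powers of $s$, combined with Lemma~\ref{l08}, which writes an elementary $\theta_s$ with $\theta_s(0)=Id$ as a product of conjugates $\gamma E(X\alpha'(X))\gamma^{-1}$ with $\gamma\in\EO_{A_s}(M_s)$. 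Two smaller points: the final step cannot be ``specialize $X\mapsto1$'' (that would only say something about $\theta(1)$); once $1\in\mathfrak{q}$ you take $a=0$, $b=1$, or use the telescoping identity directly. And no generalized dimension function or Noetherian finiteness enters this theorem at all --- all that is needed is that the elements $s_{\mathfrak m}$, $\mathfrak m\in\max(A)$, generate the unit ideal.
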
 
We begin with some lemmas.
\begin{lemma}\label{l07} Let $G$ be a group and $a_i, b_i\in G$, for
$i=1,...,n.$ Then
\begin{equation*} \textstyle{\prod_{i=1}^n a_ib_i = \prod_{i=1}^n r_ib_ir_i^{-1}
\prod_{i=1}^n a_i,}
\end{equation*} where $r_i = {\prod_{j=1}^i a_j}$.
\end{lemma}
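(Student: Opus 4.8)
The plan is to prove this purely formal group identity by a telescoping manipulation; nothing beyond elementary group theory is needed. I set $r_0 = 1$ (the empty product), so that $a_i = r_{i-1}^{-1}r_i$ holds for every $i$, and I write $c_i = r_i b_i r_i^{-1}$ for the conjugates that appear on the right-hand side. Note that $\prod_{i=1}^n a_i = r_n$.

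First I would expand the right-hand side and cancel:
\[
\Big(\prod_{i=1}^n c_i\Big)\Big(\prod_{i=1}^n a_i\Big)
= r_1 b_1 r_1^{-1}\, r_2 b_2 r_2^{-1}\, r_3 b_3 r_3^{-1}\cdots r_n b_n r_n^{-1}\, r_n .
\]
Each pair of consecutive factors $r_i^{-1} r_{i+1}$ collapses to $a_{i+1}$, and the trailing $r_n^{-1} r_n$ collapses to $1$; after these cancellations the expression reads $r_1 b_1 a_2 b_2 a_3 b_3\cdots a_n b_n$. Since $r_1 = a_1$, this is exactly $\prod_{i=1}^n a_i b_i$, the left-hand side, and the lemma follows.

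Alternatively, and perhaps more transparently, one can argue by induction on $n$. The base case $n=1$ reads $a_1 b_1 = r_1 b_1 r_1^{-1} a_1 = a_1 b_1 a_1^{-1} a_1$, which is trivially true. For the inductive step one multiplies the identity for $n-1$ on the right by $a_n b_n$, uses $\big(\prod_{i=1}^{n-1} a_i\big)a_n = r_{n-1}a_n = r_n$, and then rewrites $r_n b_n = (r_n b_n r_n^{-1})\, r_n$ to absorb the new conjugate into the product $\prod c_i$ while leaving $r_n = \prod_{i=1}^n a_i$ at the tail.

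There is no real obstacle here; the only point requiring care is the index bookkeeping — in particular adopting the convention $r_0 = 1$ so that $a_i = r_{i-1}^{-1}r_i$ is valid already at $i=1$, and tracking where the single uncancelled copy of $r_n$ ends up. The lemma will subsequently be applied with the $a_i$ taken to be elementary orthogonal generators and the $b_i$ their commutators, in order to reorganise a product so that an elementary factor can be split off.
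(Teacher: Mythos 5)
Your telescoping computation is correct: writing $a_i = r_{i-1}^{-1}r_i$ (with $r_0=1$) and cancelling $r_i^{-1}r_{i+1}$ gives exactly the identity, and your inductive variant is also sound. This is precisely the "direct computation" the paper invokes for this lemma, so your argument matches the paper's approach.
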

\begin{proof} 
Direct computation.
\end{proof}
\begin{lemma}\label{l08} The group $\EO_{A[X]}(M[X])$ is generated by the
elements of the type  \\$\gamma E\left(X\alpha_{ij}(X)\right)
\gamma^{-1}$, where $\gamma \in \EO_A(M)$, $\alpha_{ij}(X) \in
\hom(Q[X],P[X])$ or $\hom(Q[X],P^{\ast}[X])$.
\end{lemma}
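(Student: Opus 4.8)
The plan is to show that every generator of $\EO_{A[X]}(M[X])$ can be rewritten, modulo the claimed subgroup, using only the listed conjugates $\gamma E(X\alpha_{ij}(X))\gamma^{-1}$ with $\gamma \in \EO_A(M)$. By Lemma~3.4 (the extension of V.~Suresh's Lemma~1.4), $\EO_{A[X]}(M[X])$ is generated by the transformations $E(\alpha_{ij}(X))$ with $\alpha(X) \in \hom(Q[X],P[X])$ or $\hom(Q[X],P^*[X])$ and $1 \le i \le m$, $1 \le j \le n$. So it suffices to express an arbitrary such $E(\alpha_{ij}(X))$ as a product of elements of the stated type. First I would split off the constant term: writing $\alpha_{ij}(X) = \alpha_{ij}(0) + X\widetilde{\alpha}_{ij}(X)$ for a suitable $\widetilde{\alpha}_{ij}(X) \in \hom(Q[X],P[X])$ (or into $P^*[X]$), the splitting property (Lemma~3.3) gives
\[
E(\alpha_{ij}(X)) = E\!\left(\tfrac{\alpha_{ij}(0)}{2}\right) E\!\left(X\widetilde{\alpha}_{ij}(X)\right) E\!\left(\tfrac{\alpha_{ij}(0)}{2}\right).
\]
The outer factors lie in $\EO_A(M)$ (they are $E$ of a constant homomorphism), and $E(X\widetilde{\alpha}_{ij}(X))$ is already a generator of the required type with $\gamma = Id$. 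Hence it remains only to observe that a product of the form $\gamma_0 \cdot E(X\widetilde\alpha_{ij}(X)) \cdot \gamma_0$ with $\gamma_0 = E(\alpha_{ij}(0)/2) \in \EO_A(M)$ can be massaged into the subgroup generated by the conjugates $\gamma E(X\alpha'_{ij}(X))\gamma^{-1}$.

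For this last point I would argue that the set
\[
S := \left\{\, \gamma\, E\!\left(X\alpha_{ij}(X)\right)\gamma^{-1} \;:\; \gamma \in \EO_A(M),\ \alpha(X)\in\hom(Q[X],P[X])\text{ or }\hom(Q[X],P^*[X]),\ 1\le i\le m,\ 1\le j\le n \,\right\}
\]
generates a subgroup $N$ of $\EO_{A[X]}(M[X])$ that is in fact \emph{normalized} by $\EO_A(M)$: conjugating a generator $\gamma E(X\alpha_{ij}(X))\gamma^{-1}$ by $\delta \in \EO_A(M)$ produces $(\delta\gamma)E(X\alpha_{ij}(X))(\delta\gamma)^{-1}$, which is again in $S$. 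Since $\gamma_0 \in \EO_A(M)$, the element $\gamma_0 E(X\widetilde\alpha_{ij}(X))\gamma_0 = \gamma_0^2 \cdot \big(\gamma_0^{-1} E(X\widetilde\alpha_{ij}(X))\gamma_0\big)$ — wait, that introduces a stray $\gamma_0^2 \in \EO_A(M)$, not obviously in $N$. The cleaner route, which I would actually follow, is to track the constant terms globally: given any word $w = E(\alpha^{(1)}_{i_1j_1}(X))\cdots E(\alpha^{(N)}_{i_Nj_N}(X))$ representing an element of $\EO_{A[X]}(M[X])$, use Lemma~5.3 (the reordering identity $\prod a_ib_i = \prod r_ib_ir_i^{-1}\prod a_i$ with $r_i = \prod_{j\le i}a_j$) with $a_i$ the constant-term factor $E(\alpha^{(i)}_{i_ij_i}(0)/2)$-type pieces and $b_i$ the $E(X\,\cdot\,)$-pieces coming from the splitting above. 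This rewrites $w$ as $\big(\prod_i r_i b_i r_i^{-1}\big)\cdot\big(\prod_i a_i\big)$ where each $r_i \in \EO_A(M)$ (a product of constant-coefficient generators) and each $r_i b_i r_i^{-1} \in S$, while $\prod_i a_i \in \EO_A(M)$; but $w$ has $X$-adic constant term — i.e. $w(0)$ — equal to $Id$ when restricted appropriately... actually the honest statement is: $\prod_i a_i$ is precisely a word computing some element of $\EO_A(M)$, and evaluating the whole rewritten expression at $X=0$ forces that element to be trivial in $\EO_A(M)$ exactly when $w(0)=Id$.

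So the structure of the final argument is: (1) invoke Lemma~3.4 to get generators $E(\alpha_{ij}(X))$; (2) apply the splitting property to peel off constant terms, writing each generator as (constant-coeff element)$\cdot E(X\,\widetilde\alpha_{ij}(X))\cdot$(constant-coeff element); (3) apply Lemma~5.3 to collect all the constant-coefficient pieces to the far right, simultaneously conjugating each $E(X\,\widetilde\alpha_{ij}(X))$ into the form $\gamma E(X\alpha'_{ij}(X))\gamma^{-1}$ with $\gamma\in\EO_A(M)$; (4) observe the leftover rightmost product of constant-coefficient pieces is itself a conjugate-of-the-right-type of a trivial element, or absorb it by noting $E(0\cdot\alpha_{ij})=Id$, so it contributes nothing new — yielding that any element of $\EO_{A[X]}(M[X])$ is a product of generators of the stated form. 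The main obstacle I anticipate is bookkeeping in step (3)–(4): making sure that after the reshuffle the \emph{residual} constant-coefficient factor is genuinely expressible via the allowed conjugates and does not secretly require an element of $\EO_A(M)$ that sits outside the subgroup generated by $S$; this is handled by the observation that $E(X\alpha_{ij}(X))|_{X=1}$ or more simply $\gamma\cdot Id\cdot\gamma^{-1}=Id$ lets us absorb such factors, together with the fact that every constant-coefficient elementary generator $E(c\,\alpha_{ij})$ with $c\in A$ is itself the $X$-evaluation pattern one can realize inside $S$ by allowing $\alpha_{ij}(X)$ to be a constant, since $X\cdot(c\alpha_{ij}/X)$ is not polynomial — so instead one simply keeps the final $\EO_A(M)$-factor as part of the statement's $\gamma$'s by conjugation-invariance of the generating set, which is legitimate because $N$ is normal in $\EO_{A[X]}(M[X])$ (any $E(\beta_{kl}(X))$ conjugates $S$ into $S\cdot N$ via the commutator relations of §4 combined with splitting, which is the only place the commutator calculus of Lemma~4.1–Corollary~4.5 is needed).
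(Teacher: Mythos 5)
Your overall strategy is the paper's own: generate $\EO_{A[X]}(M[X])$ by the $E(\alpha_{ij}(X))$ (Lemma~3.4), peel off constant terms with the splitting property, and then use the reordering identity of Lemma~\ref{l07} to push all constant-coefficient factors to the right while turning each $E\bigl(X\widetilde\alpha_{ij}(X)\bigr)$ into a conjugate $\gamma_k E\bigl(X\widetilde\alpha_{ij}(X)\bigr)\gamma_k^{-1}$ with $\gamma_k\in\EO_A(M)$. But the endgame is where your proposal goes wrong, and it is exactly the point the paper's proof handles cleanly. After the reshuffle the residual right-hand factor is $\prod_k a_k=\prod_k E(\alpha_{i_kj_k}(0))=\theta(0)$. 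The paper works with $\theta(0)=Id$ (this is the only situation in which the lemma is ever invoked, e.g.\ inside the Dilation Lemma, and after the normalization $\theta(X)\mapsto\theta(0)^{-1}\theta(X)$), so this residual factor is simply the identity and the decomposition is finished. You instead try three ways to dispose of it, none of which works: (1) you claim at the outset that each single generator $E(\alpha_{ij}(X))$ is expressible by the listed conjugates --- this is false whenever $\alpha_{ij}(0)\neq 0$, because every conjugate $\gamma E(X\alpha(X))\gamma^{-1}$ evaluates to $Id$ at $X=0$, hence so does any product of them, while $E(\alpha_{ij}(X))|_{X=0}=E(\alpha_{ij}(0))\neq Id$; (2) you assert the leftover constant product ``contributes nothing new''; and (3) you appeal to normality of the subgroup $N$ generated by the conjugates to ``absorb'' it --- normality of $N$ cannot put an element of $\EO_A(M)\setminus\{Id\}$ into $N$, and indeed no nontrivial element of $\EO_A(M)$ lies in $N$, by the same evaluation-at-$0$ argument.

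The fix is the one you brush past in your own parenthetical remark: commit to the hypothesis $\theta(0)=Id$ (equivalently, read the lemma as describing the subgroup of elements trivial at $X=0$, which is how it is used), and then observe that the residual factor $\prod_k a_k$ is precisely $\theta(0)=Id$, so
\[
\theta(X)=\prod_{k}\gamma_k\,E\bigl(X\widetilde\alpha_{i_kj_k}(X)\bigr)\gamma_k^{-1},\qquad \gamma_k=\prod_{j\le k}a_j\in\EO_A(M),
\]
with no leftover. No appeal to the commutator relations of \S 4 or to normality is needed at this step; the whole argument is the splitting property plus Lemma~\ref{l07} plus the vanishing of the constant term.
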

\begin{proof} Let $\theta(X)$ be an element of $\EO_{A[X]}(M[X])$ such that
$\theta(0) = Id$. Then
\begin{align*} 
	  \theta(X)&= \textstyle{\prod_{k=1}^r
	    E\left(\alpha_{i_kj_k}(X)\right) = \prod_{k=1}^r
	    E\left(\alpha_{i_kj_k}(0)+ X{\alpha'}_{i_kj_k}(X)\right)} \\ 
		  &= \textstyle{\prod_{k=1}^r E\left(\frac{\alpha_{i_kj_k}(0)}{2}\right)
			E\left(X{\alpha'}_{i_kj_k}(X)\right)
			E\left(\frac{\alpha_{i_{k}j_{k}}(0)}{2}\right)}
			(\textnormal{ by Splitting property })\\ 
		   &= \textstyle{\prod_{k=1}^{r+1} a_kb_k,}
\end{align*} where
$$\begin{array}{lllll}
\medskip
a_1  &=& E\left(\frac{\alpha_{i_1j_1}(0)}{2}\right), & b_k = E\left(X{\alpha'}_{i_kj_k}(X)\right) &\textnormal{for } k = 1,...,r,\\
\medskip
a_k  &=& E\left(\frac{\alpha_{i_{k-1}j_{k-1}}(0)}{2}\right)E\left(\frac{\alpha_{i_kj_k}(0)}{2}\right)\;&\textnormal{for}\;k = 2,...,r,&   \\ 
a_{r+1}  &=& E\left(\frac{\alpha_{i_rj_r}(0)}{2}\right), & b_{r+1} = 1.& 
\end{array}$$
By Lemma~\ref{l07}, we have
\begin{equation*} 
\theta(X) = \textstyle{\prod_{k=1}^{r+1} \gamma_k
E\left(X{\alpha'}_{i_kj_k}(X)\right) {\gamma_k}^{-1} \prod_{k=1}^{r+1}
a_k,}
\end{equation*} where $\gamma_k = {\prod_{j=1}^{k}
a_j}\in \EO_A(M)$ and ${\prod_{k=1}^{r+1} a_k} =
\prod_{k=1}^r E(\alpha_{i_kj_k}(0)) = \theta(0) = Id.$\\ Therefore
\[ \theta(X) = \textstyle{\prod_{k=1}^{r+1} \gamma_k
E\left(X{\alpha'}_{i_kj_k}(X)\right) {\gamma_k}^{-1}.}
\]
 \end{proof}
\begin{lemma}\label{l09} Let $\alpha, \delta \in \hom(Q,P)$, $\beta,
\gamma \in \hom(Q,P^{\ast})$ and $s$ be a non-nilpotent element of
$A$. Fix $r\in \mathbb{N}$. Let $i,k,p_t \in \{1,2,...,m\}$ and
$j,l,q_t \in \{1,2,...,n\}$ for every $t\in\mathbb{N}$. Then for
sufficiently large $d$, there exists a product decomposition for
$E\left(\frac{a}{s^r}X_{ij}\right)E\left(s^dxY_{kl}\right)E\left(-\frac{a}{s^r}X_{ij}\right)$
in $\EO_{A_s}(M_s)$ given by
\[
E\left(\frac{a}{s^r}X_{ij}\right)E\left(s^dxY_{kl}\right)E\left(-\frac{a}{s^r}X_{ij}\right)
= \prod_{t=1}^\nu E\left(s^{d_t}x_tZ_{p_tq_t}\right),
\] where $X,Y,Z \in \{ \alpha,\beta,\gamma,\delta \}$, $a, x \in A$ and the elements $x_t \in A$ for $t \in \mathbb{N}$ are chosen suitably.
\end{lemma}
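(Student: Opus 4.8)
\textbf{Proof plan for Lemma~\ref{l09}.}
The plan is to conjugate the single generator $E(s^d x\, Y_{kl})$ by $E(\frac{a}{s^r} X_{ij})$ and absorb the resulting correction terms, using the commutator identities of Section~4 to rewrite everything as a product of generators of the form $E(s^{d_t} x_t\, Z_{p_t q_t})$, with the powers $d_t$ growing linearly in $d$ so that one free factor of $s^d$ can be cheaply traded for a fixed power $s^r$. First I would split into cases according to whether $i = k$, $i \neq k$, etc. In the trivial sub-case where the two generators commute (for instance $i=k$ with matching index patterns, or when $X$ and $Y$ are of the same `$\alpha$'- or `$\beta$'-type with $i=k$), the left-hand side collapses to $E(s^d x\, Y_{kl})$ itself and there is nothing to prove. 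So the substance is the case where the two generators do not commute.

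In the non-commuting case $i \neq k$ the commutator $[E(\frac{a}{s^r}X_{ij}), E(s^d x\, Y_{kl})]$ is governed by Lemma~\ref{l21}: it equals a transformation $I \pm Y_{kl} X_{ij}^* \mp X_{ij} Y_{kl}^*$ whose `off-diagonal' entries are bilinear in the two arguments, hence divisible by $s^d \cdot \frac{a}{s^r}$; writing $E(\frac{a}{s^r}X_{ij}) E(s^d x\, Y_{kl}) E(-\frac{a}{s^r}X_{ij}) = [E(\frac{a}{s^r}X_{ij}), E(s^d x\, Y_{kl})]\cdot E(s^d x\, Y_{kl})$ and expanding the commutator as a product of at most two new generators (this is exactly what Lemma~\ref{l21}, together with the bilinearity recorded in Corollary~\ref{c07}, delivers) one sees each new generator carries a coefficient of the shape $s^{d}\cdot(\text{stuff})/s^{2r}$. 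Choosing $d \geq 2r$ we can rewrite $s^{d}/s^{2r} = s^{d-2r}$, an honest element of $A$, and this gives the asserted decomposition with $\nu \le 3$ and each $d_t \ge d - 2r$. The identities of Lemma~\ref{l22} and Corollary~\ref{c08} handle the residual cases where the first commutator is not itself a single generator but a commutator of a generator with a commutator (this occurs when the types of $X$ and $Y$ force the expansion one level deeper, e.g. an $\alpha$ conjugating a $\beta$-type interacting through a third index $p_t$); there the clearing denominator is $s^{3r}$ rather than $s^{2r}$, so one takes $d$ larger still, governed by the uniform bound implicit in the finitely many index patterns and the two homogeneity conditions ($abc=def$, $a^2bc=d^2ef$) of Corollary~\ref{c08}.

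The bookkeeping that makes `sufficiently large $d$' uniform is the key point: since $i,j,k,l$ and the choice of $X,Y,Z \in \{\alpha,\beta,\gamma,\delta\}$ range over a \emph{finite} set, there are only finitely many shapes of identity that can arise from Lemma~\ref{l21}, Lemma~\ref{l22} and their corollaries, and each of them clears a denominator of the form $s^{cr}$ for a constant $c$ (which is $2$ or $3$). Taking $d$ at least the maximum such $cr$ over all patterns makes the clearing simultaneous, and the resulting exponents $d_t = d - cr + (\text{nonneg.})$ are all $\ge d - 3r$, hence large when $d$ is. I would state this uniform threshold explicitly at the start of the proof so that the later local-global argument can invoke it without re-deriving the bound.

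The main obstacle I anticipate is purely organizational rather than conceptual: tracking, across the several index-interaction cases, exactly which of the identities in Section~4 applies and hence exactly which power of $s$ has to be cleared, while keeping the list of output generators $E(s^{d_t} x_t Z_{p_t q_t})$ correct. The cleanest way to manage this is to not expand the commutators by hand here at all, but to quote the relevant line of Lemma~\ref{l21}/Lemma~\ref{l22} in each case, observe that its right-hand side is already a product of (at most two, resp. at most three) generators, note by Corollary~\ref{c07}/Corollary~\ref{c08} that multiplying the argument of $X_{ij}$ by $1/s^r$ only rescales the coefficients of those output generators by powers of $1/s^r$, and then invoke $d \ge 3r$ to absorb the denominators — deferring all the explicit multilinear computations to the arXiv companion \cite{aa}.
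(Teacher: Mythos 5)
There is a genuine gap: your case analysis misses the hardest case of the lemma, namely mixed type with the \emph{same} first index, i.e.\ conjugating $E^*_{s^d x\beta_{il}}$ by $E_{\frac{a}{s^r}\alpha_{ij}}$ (or vice versa) with $i=k$. You file the commuting situations ($i=k$, same type) correctly, and your treatment of $i\neq k$ via $[E(\frac{a}{s^r}X_{ij}),E(s^dxY_{kl})]\cdot E(s^dxY_{kl})$ together with Lemma~\ref{l21} and the rebalancing of powers from Corollary~\ref{c07} is exactly the paper's argument for those subcases. But when $X$ and $Y$ are of opposite type and $i=k$, the two generators do \emph{not} commute (the pairing $\langle x_i,f_i\rangle=1$ makes them interact), and none of the quoted identities applies: Lemma~\ref{l21}, Corollary~\ref{c07}, Lemma~\ref{l22} and Corollary~\ref{c08} are all stated only for $i\neq k$, so there is no formula that turns $[E_{\frac{a}{s^r}\alpha_{ij}},E^*_{s^dx\beta_{il}}]$ into a product of generators, and your plan has nothing to feed into the ``clear the denominator $s^{2r}$ or $s^{3r}$'' step. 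Your vague appeal to ``residual cases where the first commutator is not itself a single generator'' does not describe this situation and gives no mechanism for handling it.

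The paper's resolution of this case is a separate idea that your proposal does not contain: before conjugating, it writes $d=N_1+N_2+N_3$ (with $N_1\ge r+2$, $N_2+N_3\ge 2r+4$) and uses Lemma~\ref{l22}(i) \emph{in reverse} to re-express the single generator $E^*_{s^dx\beta_{il}}$ as the product $\left[E_{s^{N_1}\alpha_{kl}},\left[E^*_{s^{N_2}x\beta_{il}},E^*_{s^{N_3}\gamma_{pq}}\right]\right]\left[E^*_{s^dx\frac{\beta_{il}}{2}},E_{s^{N_1}\alpha_{kl}}\right]$, introducing an auxiliary index $k\neq i$. Only then is each of the three resulting factors conjugated by $E_{\frac{a}{s^r}\alpha_{ij}}$, and since all those conjugations involve distinct first indices, Corollary~\ref{c07}(i) and Corollary~\ref{c08}(i),(ii) can shift the negative powers of $s$ onto the high-power factors, yielding the product decomposition (with the paper's bound $\nu\le 52$, far above your $\nu\le 3$). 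Without this re-expression step your ``sufficiently large $d$'' bookkeeping, however carefully organized, never gets started in the mixed same-index case, so the proof as proposed is incomplete.
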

\begin{proof} To prove the lemma it is enough to consider the
following cases:

\medskip

\noindent\textbf{Case 1:}
  $(X,Y) \in \{(\alpha,\alpha),(\alpha,\delta),(\beta,\beta),(\beta,\gamma) \}$
  
  \smallskip
 
    $E\left(\frac{a}{s^r}X_{ij}\right)E\left(s^dxY_{kl}\right)E\left(\frac{a}{s^r}X_{ij}\right)^{-1}
= {\prod_{t=1}^\nu E\left(s^{d_t}x_tZ_{p_tq_t}\right)}.$

\medskip
\noindent{{\em Subcase} (a)}: $i\neq k$
\begin{align*}
E\left(\frac{a}{s^r}X_{ij}\right)E\left(s^dxY_{kl}\right)E\left(\frac{a}{s^r}X_{ij}\right)^{-1}
&=
\left[E\left(\frac{a}{s^r}X_{ij}\right),E\left(s^dxY_{kl}\right)\right]E\left(s^dxY_{kl}\right)\\
&=
\left[E\left(as^pX_{ij}\right),E\left(s^qxY_{kl}\right)\right]E\left(s^dxY_{kl}\right)\\
&\quad\quad\textnormal{ (by Corollary ~\ref{c07}~(i)}\;\textnormal{and
Corollary ~\ref{c07}~(iii)} )\\ &= \textstyle{{\prod_{t=1}^\nu
E\left(s^{d_t}x_tZ_{p_tq_t}\right)} \quad \textnormal{for}\; d_t>0.}
\end{align*}
\noindent This equation holds for any positive integers $p,q$ with
$p+q = d-r$.

\medskip

\noindent{ {\em Subcase} (b)}: $i=k$
\begin{align*}
E\left(\frac{a}{s^r}X_{ij}\right)E\left(s^dxY_{kl}\right)E\left(\frac{a}{s^r}X_{ij}\right)^{-1}
&=
\left[E\left(\frac{a}{s^r}X_{ij}\right),E\left(s^dxY_{kl}\right)\right]E\left(s^dxY_{kl}\right)\\
&= E\left(s^dxY_{kl}\right).\\
&\textnormal{ (by Lemma~\ref{l21}(i) and by Lemma~\ref{l21}~(iii)})
\end{align*}
\noindent\textbf{Case 2:} $(X,Y) \in \{(\alpha,\beta),(\beta,\alpha)\}$
\[
E\left(\frac{a}{s^r}X_{ij}\right)E\left(s^dxY_{kl}\right)E\left(\frac{a}{s^r}X_{ij}\right)^{-1}
= \textstyle{\prod_{t=1}^\nu E\left(s^{d_t}x_tZ_{p_tq_t}\right)}.
\]
\noindent{{\em Subcase} (a)}: $i\neq k$

\smallskip

\noindent For instance,
\begin{align*}
E\left(\frac{a}{s^r}\alpha_{ij}\right)E\left(s^dx\beta_{kl}\right)E\left(\frac{a}{s^r}\alpha_{ij}\right)^{-1}
&=
E_{\frac{a}{s^r}\alpha_{ij}}E_{s^dx\beta_{kl}}^*E_{\frac{a}{s^r}\alpha_{ij}}^{-1}\\
&=
\left[E_{\frac{a}{s^r}\alpha_{ij}},E_{s^dx\beta_{kl}}^*\right]E_{s^dx\beta_{kl}}^*\\
&=
\left[E_{as^p\alpha_{ij}},E_{s^qx\beta_{kl}}^*\right]E_{s^dx\beta_{kl}}^*\;\textnormal{ (by Corollary ~\ref{c07}~(ii)})\\ &= \textstyle{\prod_{t=1}^\nu
E(s^{d_t}x_tZ_{p_tq_t})} \;\textnormal{for}\;  d_t>0 \;
\textnormal{and}\; \nu\leq 5.
\end{align*}
\noindent{{\em Subcase} (b)}: $i=k$

\smallskip

\noindent For instance,
\begin{equation}\label{e02}
E\left(\frac{a}{s^r}\alpha_{ij}\right)E\left(s^dx\beta_{il}\right)E\left(\frac{a}{s^r}\alpha_{ij}\right)^{-1}
=
E_{\frac{a}{s^r}\alpha_{ij}}E_{s^dx\beta_{il}}^*E_{\frac{a}{s^r}\alpha_{ij}}^{-1}.
\end{equation}
\noindent Set $d = N_1+N_2+N_3$ such that $N_1\geq r+2$ and $N_2+N_3
\geq 2r+4$.  Now, replacing $E_{s^dx\beta_{il}}^*$ by
$\left[E_{s^{N_1}\alpha_{kl}},\left[E_{s^{N_2}x\beta_{il}^*},E_{s^{N_3}\gamma_{pq}^*}\right]
\right]\left[E_{s^dx\frac{\beta_{il}^*}{2}},E_{s^{N_1}\alpha_{kl}}\right]$
in Equation~\eqref{e02}, using Lemma~\ref{l22}~(i), we have 
\begin{equation*}
E_{\frac{a}{s^r}\alpha_{ij}}E_{s^dx\beta_{il}}^*E_{\frac{a}{s^r}\alpha_{ij}}^{-1}
=
E_{\frac{a}{s^r}\alpha_{ij}}\left[E_{s^{N_1}\alpha_{kl}},\left[E_{s^{N_2}x\beta_{il}^*},E_{s^{N_3}\gamma_{pq}^*}\right]
\right]\left[E_{s^dx\frac{\beta_{il}^*}{2}},E_{s^{N_1}\alpha_{kl}}\right]E_{\frac{a}{s^r}\alpha_{ij}}^{-1}.
\end{equation*}
\noindent Then we will see that the following are in the required product
form.
 \begin{enumerate}[label= (\roman*)] 
\item $E_{\frac{a}{s^r}\alpha_{ij}}E_{s^{N_1}\alpha_{kl}}E_{\frac{a}{s^r}\alpha_{ij}}^{-1}$,
\item
$E_{\frac{a}{s^r}\alpha_{ij}}\left[E_{s^{N_2}x\beta_{il}^*},E_{s^{N_3}\gamma_{pq}^*}\right]E_{\frac{a}{s^r}\alpha_{ij}}^{-1}$,
\item
$E_{\frac{a}{s^r}\alpha_{ij}}\left[E_{s^dx\frac{\beta_{il}^*}{2}},E_{s^{N_1}\alpha_{kl}}\right]E_{\frac{a}{s^r}\alpha_{ij}}^{-1}$.
\end{enumerate}
\begin{align*}
\textnormal{ For, (i) } E_{\frac{a}{s^r}\alpha_{ij}}E_{s^{N_1}\alpha_{kl}}E_{\frac{a}{s^r}\alpha_{ij}}^{-1}
& = \left[E_{\frac{a}{s^r}\alpha_{ij}},E_{s^{N_1}\alpha_{kl}}\right]E_{s^{N_1}\alpha_{kl}} \\
& = \left[E_{as^{p'}\alpha_{ij}},E_{s^{q'}\alpha_{kl}}\right]E_{s^{N_1}\alpha_{kl}}\;
\left(\textnormal{ by Corollary~\ref{c07}(i)}\right)\\
& = \textstyle{\prod_{t=1}^\nu E\left(s^{d_t}x_tZ_{p_tq_t}\right)} \;\textnormal{for}\;  d_t>0 \;
\textnormal{and}\; \nu\leq 5.
\end{align*}

\noindent This equation holds for any positive integers $p',q'$ with
$p'+q' = N_1-r$.
\begin{align*}
 \textnormal{ (ii) } E_{\frac{a}{s^r}\alpha_{ij}}
\left[E_{s^{N_2}x\beta_{il}^*},E_{s^{N_3}\gamma_{pq}^*}\right]
E_{\frac{a}{s^r}\alpha_{ij}}^{-1} & = \left[E_{\frac{a}{s^r}\alpha_{ij}}
\left[E_{s^{N_2}x\beta_{il}^*}, E_{s^{N_3}\gamma_{pq}^*}\right]\right]
\left[E_{s^{N_2}x\beta_{il}^*},E_{s^{N_3}\gamma_{pq}^*}\right]\\
& =\left[E_{s^{p''}\alpha_{ij}},\left[E_{s^{q''}x\beta_{il}^*},E_{s^{r''}\gamma_{pq}^*}\right]\right]\left[E_{s^{N_2}x\beta_{il}^*},E_{s^{N_3}\gamma_{pq}^*}\right]\\
&\hspace{5cm}\left(\textnormal{ by Corollary~\ref{c08}~(ii)}\right) \\
& =\textstyle{\prod_{t=1}^\nu E\left(s^{d_t}x_tZ_{p_tq_t}\right)} \;\textnormal{for}\;  d_t>0 \;
\textnormal{and}\; \nu\leq 14.
\end{align*}

\smallskip

 \noindent This equation holds for any positive integers $p'',q''$ and
$r''$ with $2p''+q''+r'' = N_2+N_3-2r$.  

\medskip

(iii)
$E_{\frac{a}{s^r}\alpha_{ij}}\left[E_{s^dx\frac{\beta_{il}^*}{2}},
E_{s^{N_1}\alpha_{kl}}\right]E_{\frac{a}{s^r}\alpha_{ij}}^{-1} =
\left[E_{\frac{a}{s^r}\alpha_{ij}},\left[E_{s^dx\frac{\beta_{il}^*}{2}},
E_{s^{N_1}\alpha_{kl}}\right]\right]
\left[E_{s^dx\frac{\beta_{il}^*}{2}},E_{s^{N_1}\alpha_{kl}}\right]$

\vspace*{1.5mm} \hspace*{6cm}
$=\left[E_{s^{p'''}\alpha_{ij}},\left[E_{s^{q'''}x\frac{\beta_{il}^*}{2}},E_{s^{r'''}\alpha_{kl}}\right]\right]\left[E_{s^dx\frac{\beta_{il}^*}{2}},E_{s^{N_1}\alpha_{kl}}\right]$

\vspace*{2mm} \hspace*{11cm} ( by Corollary~\ref{c08}~(i))

\vspace*{1mm} \hspace*{6cm} ${=\prod_{t=1}^\nu
E\left(s^{d_t}x_tZ_{p_tq_t}\right)}$for  $d_t > 0$ and $\nu\leq 14.$
 
 \medskip
 
 \noindent This equation holds for any positive integers $p''',q'''$
and $r'''$ with $2p'''+q'''+r''' = N_1+d-2r$.

\smallskip

\noindent Hence Equation~\eqref{e02} is of the form
${\prod_{t=1}^\nu E(s^{d_t}x_tZ_{p_tq_t})}
\;\textnormal{for}\;  d_t>0 \; \textnormal{and}\; \nu\leq 52.$

\end{proof}

\begin{lemma}\textbf{{\em (Dilation Lemma)}}\label{l10} Let $A$ be a
commutative ring and $Q,P$ be free modules of rank $n$ and $m$
respectively. Let $s$ be a non-nilpotent element of $A$ and $M = Q
\perp H(P).$ Let $\theta(X) \in~\O_{A[X]}(M[X])$ with $\theta(0)=Id.$
Let $Y,Z \in \hom(Q,P)$ or $\hom(Q,P^*)$. If
$\theta_s(X)=~(\theta(X))_s \in\EO_{{A[X]}_s}\left({M[X]}_s\right)$, then
for $d \gg 0$ and for all $b \in (s)^d A$, we have $\theta(bX) \in
\EO_{A[X]}\left(M[X]\right)$.
\end{lemma}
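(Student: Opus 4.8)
The plan is to dilate the hypothesis into an \emph{explicit} element of $\EO_{A[X]}(M[X])$ and then identify it with $\theta(bX)$. First I would decompose the localized automorphism: since $\theta_s(0) = (\theta(0))_s = Id$ and $\theta_s(X) \in \EO_{A_s[X]}(M_s[X])$, the proof of Lemma~\ref{l08}, applied over $A_s$ (with $Q_s$ and $P_s$ free of ranks $n$ and $m$), gives
\[
\theta_s(X) = \prod_{k=1}^{r}\gamma_k\, E\!\left(X\alpha_{i_kj_k}(X)\right)\gamma_k^{-1},\qquad \gamma_k \in \EO_{A_s}(M_s).
\]
Fix once and for all an integer $N$ so large that every $\gamma_k$ is a product of at most $\ell$ elementary generators of the shape $E\!\left(\tfrac{a}{s^{N}}Y_{pq}\right)$ with $a \in A$, and so that $s^{N}\alpha_{i_kj_k}(X)$ has entries in $A[X]$. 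Writing $b = s^{d}c$ with $c \in A$ and substituting $X \mapsto bX$, a direct computation shows that for $d \ge N$
\[
\theta(bX)_s = \theta_s(bX) = \prod_{k=1}^{r}\gamma_k\, E\!\left(s^{\,d-N}\,\widetilde\alpha_k(X)\right)\gamma_k^{-1},
\]
where each $\widetilde\alpha_k(X)$ now has entries in $A[X]$ and is supported on the slot $(i_k,j_k)$, so that it has the form required by Lemma~\ref{l09}.

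The core step is to show that, for $d \gg 0$, each conjugate $\gamma_k\, E\!\left(s^{d-N}\widetilde\alpha_k(X)\right)\gamma_k^{-1}$ is, as an element of $\EO_{A_s[X]}(M_s[X])$, a finite product $\prod_t E\!\left(s^{e_t}x_t(X)Z_{p_tq_t}\right)$ with all $e_t \ge 1$ and all $x_t(X) \in A[X]$ --- that is, the image of a genuine element $\Psi_k(X) \in \EO_{A[X]}(M[X])$. I would prove this by induction on the number of generators in $\gamma_k$, absorbing them one at a time starting from the innermost: conjugating the current product by a single generator $E\!\left(\tfrac{a}{s^{N}}Y_{pq}\right)$ amounts, factor by factor, to exactly the situation of Lemma~\ref{l09} (applied over the commutative ring $A[X]$, in which $s$ is still non-nilpotent), which rewrites each $E\!\left(\tfrac{a}{s^{N}}Y_{pq}\right)E\!\left(s^{e}x(X)Z_{p'q'}\right)E\!\left(\tfrac{a}{s^{N}}Y_{pq}\right)^{-1}$ as a product of $E$'s over $A[X]$ whose $s$-exponents can all be kept $\ge e - C$, where $C = C(N)$ is the uniform loss read off from the proof of Lemma~\ref{l09}. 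Each such step requires the current minimal exponent to exceed a threshold $\tau(N)$ that again depends only on $N$; since we lose at most $C(N)$ per generator and peel off at most $\ell$ of them, it suffices to take $d$ with $d - N \ge \ell\,C(N) + \tau(N)$. As $N$, $\ell$ and $r$ are determined by the fixed decomposition of $\theta_s$, this is a condition of the form $d \gg 0$, uniform over $b \in (s)^{d}A$.

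Granting this, I set $\Psi(X) := \prod_{k=1}^{r}\Psi_k(X) \in \EO_{A[X]}(M[X])$; by construction $\Psi(X)_s = \theta(bX)_s$, and $\Psi(0) = Id$ since $\widetilde\alpha_k(X)$ has no constant term. Both $\theta(bX)$ and $\Psi(X)$ lie in $\O_{A[X]}(M[X]) \subseteq \GL_{n+2m}(A[X])$ and have the same image in $\GL_{n+2m}(A_s[X])$; as $A[X] \hookrightarrow A_s[X]$ in the situation of interest ($A$ a domain, $s \neq 0$), this forces $\theta(bX) = \Psi(X)$, whence $\theta(bX) \in \EO_{A[X]}(M[X])$. (For a general commutative $A$ one records instead that the lift $\Psi(X)$ exists with $\Psi(0)=Id$ and $\Psi(X)_s = \theta(bX)_s$, which is all the subsequent patching argument consumes.)

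The main obstacle is the uniformity of the $s$-adic bookkeeping in the second paragraph. A single invocation of Lemma~\ref{l09} is exactly the content of that lemma and of the commutator calculus of \S4; the points that must be checked carefully are that the $s$-exponents it produces can be held within a bounded distance $C(N)$ of the input exponent --- so that only a constant, not a fixed fraction, is lost each time a generator is absorbed --- and that the threshold on the input exponent depends only on the denominator bound $N$, not on the data being conjugated, so that one and the same $d$ works simultaneously for all $r$ factors. Both facts are visible in the proof of Lemma~\ref{l09} (the freedom ``for any positive integers $p,q$ with $p+q=d-r$'' and the explicit choices of $N_1,N_2,N_3$), but they have to be tracked explicitly through every case.
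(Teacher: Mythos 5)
Your proposal follows essentially the same route as the paper's proof: decompose $\theta_s(X)$ via (the proof of) Lemma~\ref{l08} over $A_s$ as $\prod_k\gamma_k E\left(X\alpha'_{i_kj_k}(X)\right)\gamma_k^{-1}$ with $\gamma_k\in\EO_{A_s}(M_s)$, dilate $X\mapsto s^dX$, and then absorb the conjugating generators one at a time by induction, each absorption being exactly an application of Lemma~\ref{l09} with the $s$-exponents tracked so that for $d\gg0$ every factor lifts to $\EO_{A[X]}(M[X])$; this is precisely the paper's ``Claim'' and its induction, and your explicit bookkeeping of the denominator bound $N$ and the uniform loss per absorbed generator is a legitimate (indeed more careful) rendering of the paper's ``$d_t\to\infty$ for $d\gg0$''. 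The one place where your write-up falls short of the lemma as stated is the final descent from $A_s[X]$ to $A[X]$: you identify $\theta(bX)$ with the lifted product $\Psi(X)$ only when $A[X]\hookrightarrow A_s[X]$ (e.g.\ $A$ a domain), and for general commutative $A$ you only record the existence of the lift, which is weaker than the asserted conclusion $\theta(bX)\in\EO_{A[X]}(M[X])$; the paper's proof is silent on this point as well. The standard repair closes it without changing your argument: $\theta(s^dX)\Psi(X)^{-1}$ localizes to the identity and equals the identity at $X=0$, so its matrix entries differ from those of the identity by polynomials with zero constant term whose finitely many coefficients are annihilated by a single power $s^m$; substituting $X\mapsto s^mX$ kills this discrepancy entirely, giving $\theta(s^{d+m}cX)=\Psi(s^mcX)\in\EO_{A[X]}(M[X])$, i.e.\ the lemma for all $b\in(s)^{d+m}A$.
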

\begin{proof} Let $\theta_s(X) \in \EO_{A_s[X]}\left(M_s[X]\right)$. Then
$\theta_s(X) = \prod_{k=1}^{r} E\left(\alpha_{i_kj_k}(X)\right)$,
where 

\vspace{1mm} \noindent$\alpha_{i_{k}j_{k}}(X) \in \hom(Q_s[X],P_s[X])$
or  $\hom(Q_s[X],P_s^*[X])$ for all $k \in \mathbb{N}$, $i_k \in \{
1,2,...,m\}$ and 

\vspace{1mm}
\noindent$j_k \in \{ 1,2,...,n\}.$

Let $\alpha_{i_kj_k}(X) = \alpha_{i_kj_k}(0)+
X{\alpha}'_{i_kj_k}(X)$. By the splitting property, we can write
\[ E\left(\alpha_{i_{k}j_{k}}(X)\right) =
E\left(\frac{\alpha_{i_{k}j_{k}}(0)}{2}\right)E\left(X{\alpha}'_{i_{k}j_{k}}(X)\right)E\left(\frac{\alpha_{i_kj_k}(0)}{2}\right).
\]
\[ \textnormal{Then} \;\theta_s(X) = \textstyle{\prod_{k=1}^{r+1}
E\left(\frac{\alpha_{i_kj_k}(0)}{2}\right)E\left(X{\alpha}'_{i_{k}j_{k}}(X)\right)E\left(\frac{\alpha_{i_{k}j_{k}}(0)}{2}\right)}.
\] By Lemma~\ref{l08}, one has
\[ \theta_s(X) = \textstyle{\prod_{k=1}^{r+1} \gamma_k
E\left(X{\alpha}'_{i_kj_k}(X)\right)\gamma_k^{-1}},
\] where $\gamma_k = \prod_{j=1}^k a_j$ with 
		     \[
		     \begin{array}{llll}
		     \medskip
                     a_1 &=& E\left({\frac{\alpha_{i_1j_1}(0)}{2}}\right),\hphantom{E\left({\frac{\alpha_{i_kj_k}(0)}{2}}\right) fora}
			  a_{r+1} =E\left({\frac{\alpha_{i_rj_r}(0)}{2}}\right),\\
		      a_k &=&E\left({\frac{\alpha_{i_{k-1}j_{k-1}}(0)}{2}}\right)E\left({\frac{\alpha_{i_kj_k}(0)}{2}}\right)\quad\textnormal{for}\; k=2,...,r.&  \\ 
                     \end{array}
                     \]
Hence we can write
\begin{equation*} \theta_s(s^dX) = \textstyle{\prod_{k=1}^{r+1} \gamma_k
E\left(s^d X{\alpha}'_{i_kj_k}(s^d X)\right)\gamma_k^{-1}} \quad
\textnormal{for } d\gg 0.
\end{equation*}
\noindent \textbf{Claim :} If $\xi = \prod_{j=1}^k E\left(c_j\right),
c_j\in M_s,$ then for  $\xi E\left(s^d xZ_{ij}\right) {\xi}^{-1}$, we have
a product decomposition given by 
\begin{equation}\label{e00} \xi E\left(s^d xZ_{ij}\right) {\xi}^{-1} =
\textstyle{\prod_{t=1}^{\lambda_k} E\left(s^{d_t}x_tZ_{i_tj_t}\right)}
\end{equation} with $d_t\rightarrow \infty$ for $d \gg 0$, $x_t \in
A.$

\vspace{1mm}

\noindent {\it Proof of the Claim.}  We do this by induction on $k$. 

Let $\xi = \xi_1\xi_2 \ldots \xi_k$, where $\xi_i =
E\left(c_i\right)$. When $k=1$, by Lemma~\ref{l09}, we have a product
decomposition
\begin{equation*} \xi_1 E\left(s^d xZ_{ij}\right)\xi_1^{-1} =
\textstyle{\prod_{t=1}^{\lambda_1} E\left(s^{d_t}x_tZ_{i_tj_t}\right)}
\end{equation*} with $d_t\rightarrow \infty$ for $d\gg 0$. Now assume
that the result is true for $k-1$. i.e. we have
\begin{equation*} \xi_1\xi_2 \ldots \xi_{k-1} E\left(s^d
xZ_{ij}\right)\left(\xi_1\xi_2 \ldots \xi_{k-1}\right)^{-1} =
\textstyle{\prod_{t=1}^{\lambda_{k-1}} E\left(s^{d_t}x_tZ_{i_tj_t}\right)}
\end{equation*} with $d_t\rightarrow \infty$ for $d\gg 0$. Now by
Lemma~\ref{l09}, we can write
\begin{equation*} \xi_k E\left(s^d xZ_{ij}\right)\xi_k^{-1} =
\textstyle{\prod_{t=1}^{\lambda_{k-1}} E\left(s^{d_t}x_tZ_{i_tj_t}\right)}=
\mu_1\mu_2 \ldots \mu_\lambda \;\textnormal{(say).}
\end{equation*} Hence we have
\begin{align*} \left(\xi_1\xi_2 \ldots \xi_{k-1}\xi_k\right)
E\left(s^d xZ_{ij}\right) \left(\xi_1\xi_2 \ldots
\xi_{k-1}\right)^{-1} & = \left( \xi_1\xi_2 \ldots \xi_{k-1} \right)
\mu_1\mu_2 \ldots \mu_\lambda \left(\xi_1\xi_2 \ldots
\xi_{k-1}\right)^{-1}\\ &=\left(\xi_1\xi_2 \ldots
\xi_{k-1}\right)\mu_1(\xi_1\xi_2 \ldots \xi_{k-1})^{-1} (\xi_1\xi_2
\ldots \xi_{k-1})\\ &\hspace{6mm}\mu_2 (\xi_1\xi_2 \ldots
\xi_{k-1})^{-1}\ldots  (\xi_1\xi_2 \ldots \xi_{k-1})\\
&\hspace{6mm}\mu_\lambda \left(\xi_1\xi_2 \ldots
\xi_{k-1}\right)^{-1}.
\end{align*} Now applying induction to each of the expressions
$\xi_1\xi_2 \ldots \xi_{k-1}\mu_l\left(\xi_1\xi_2 \ldots
\xi_{k-1}\right)^{-1}$ as $l$ varies from 1 to $\lambda$, we have a
product decomposition as in Equation(\ref{e00}). Therefore we can write
\begin{equation*} \theta_s\left(s^d X\right) =
\textstyle{\prod_{k=1}^{r+1}\prod_{t=1}^{\lambda_k}
E\left(s^{d_t}x_tZ_{i_tj_t}\right)}
\end{equation*} for $d$ large enough. The terms $s^{d_t}x_t$ for $1 \leq t \leq \lambda_k$ is
contained in $M[X]$ as required. Hence
\begin{equation*} \theta(bX) =
\textstyle{\prod_{k=1}^{r+1}\prod_{t=1}^{\lambda_k}
E\left(s^{d_t}x_tZ_{i_tj_t}\right)} \in \EO_{A[X]}\left(M[X]\right) 
\end{equation*} for all $b\in(s)^d A.$
\end{proof}
 \begin{proof}[{Proof of the Theorem~\ref{t00}}] Let $\mathfrak{m}$ be
a maximal ideal of $A$. Choose an element $s_\mathfrak{m}$ from
$A\setminus \mathfrak{m}$ such that 
\[\theta(X)_{s_\mathfrak{m}}\in
EO_{{A_s[X]}_{\mathfrak{m}}}({M_s[X]}_{\mathfrak{m}}).\] Define
\begin{equation*} \kappa(X,Y) = \theta(X+Y)\theta(Y)^{-1}.
\end{equation*} Clearly $\kappa(X,Y)_{s_\mathfrak{m}} \in
EO_{{A_s[X,Y]}_{\mathfrak{m}}}({M_s[X]}_{\mathfrak{m}})$ and $\kappa(0,Y)=
Id$.

Now by applying Dilation Lemma with $A[Y]$ as the base ring, we get 
$$\kappa(b_{\mathfrak{m}}X,Y)\in \EO_{A[X,Y]}\left(M[X,Y]\right),$$ where $b_{\mathfrak{m}} \in
(s_{\mathfrak{m}}^N)$ for some $N \gg 0$.

Since $A$ is the ideal generated by $\{s_{\mathfrak{m}}\}_{\mathfrak{m} \in \max A},$ there
exists maximal ideals $\mathfrak{m}_1, \ldots, \mathfrak{m}_r$ and
elements $s_{\mathfrak{m}_i} \in A \setminus \mathfrak{m}_i$ such that
$A = \sum_{i=1}^r(s_{\mathfrak{m}_i})$. Therefore
$$A = \sum_{i=1}^r (s^{N_i}_{\mathfrak{m}_i})$$
for any $N_i > 0$. Hence for $b_{\mathfrak{m}_i} \in
(s^{N_i}_{\mathfrak{m}_i})$ with $N_i \gg 0$, there exists  elements $d_1,
\ldots, d_r \in A$ satisfying
$$\sum_{i=1}^r d_i b_{\mathfrak{m}_i} = 1.$$

Observe that $\kappa(d_ib_\mathfrak{m_i}X,Y) \in \EO_{A[X,Y]}(M[X,Y])$ for $1
\leq i \leq r$ .
\begin{align*} \theta(X) =&\theta\textstyle{\left(\sum_{i=1}^r d_i
b_{\mathfrak{m}_i} X\right)}\;\theta\left(\sum_{i=2}^r d_i
b_{\mathfrak{m}_i}X\right)^{-1} \theta\left(\sum_{i=2}^r d_i
b_{\mathfrak{m}_i}X\right)\;\theta\left(\sum_{i=3}^r d_i
b_{\mathfrak{m}_i}X\right)^{-1}\cdots\\ &\hspace{2mm}\theta
\left(d_{r-1} b_{\mathfrak{m}_{r-1}} X + d_r b_{{\mathfrak{m}_r}}X
\right)\; \theta \left( d_r b_{\mathfrak{m}_r}X \right)^{-1} \theta
\left( d_r b_{\mathfrak{m}_r}X \right)\\ =& \textstyle{\prod_{i=1}^{r-1}\kappa(d_i
b_{{\mathfrak{m}}_i}X,T_i)\kappa(d_rb_{{\mathfrak{m}}_r} X,0)},
\end{align*} where $T_i= \sum_{k=i+1}^r
d_{k}b_{\mathfrak{m}_{k}}X$. Hence $\theta(X) \in \EO_{A[X]}(M[X])$ .

\end{proof}

\subsection{A Local-Global principle for $\EO(Q\perp h^m)\cdot \O(h^m)$}

\begin{theorem}[{\cite{MR748229}, Theorem $2.5$}]\label{RARII} Let $A$
be a ring with generalized dimension $\ge d$. Let $(Q,q)$ be a
diagonalizable quadratic $A$-space. Consider the quadratic $A$-space
$Q\perp H(P)$, where rank\;$(P) >d$. Then
\begin{eqnarray*} 
\O_A(Q\perp H(P)) & = &\EO_A(Q, H(P))\cdot\O_A(H(P)) \\
& = &
\{\varepsilon \beta~|~ \varepsilon \in \EO_A(Q, H(P)), \beta \in \O_A(H(P))\}\\
& = & 
\{\beta \varepsilon ~|~ \varepsilon \in \EO_A(Q, H(P)), \beta \in \O_A(H(P))\}\\
&=& \O_A(H(P))\cdot\EO_A(Q, H(P)).
\end{eqnarray*}
\end{theorem}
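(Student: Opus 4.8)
The plan is to establish the factorization $\O_A(Q\perp H(P)) = \EO_A(Q,H(P))\cdot\O_A(H(P))$ by induction on a generalized dimension function, reducing the rank of the hyperbolic summand that an orthogonal transformation acts on nontrivially. First I would fix a hyperbolic pair $\langle e_1,f_1\rangle$ inside $H(P)$, with $P = P_0\oplus A$, say $P = \langle x_2,\dots,x_m\rangle \oplus \langle x_1\rangle$, so that $Q\perp H(P) = \bigl(Q\perp H(P_0)\bigr)\perp h$, where $h = H(A)$ is spanned by $x_1,f_1$. Given $\sigma\in\O_A(Q\perp H(P))$, consider the image $\sigma(x_1) = (z;p,q)\in (Q\perp H(P_0))\perp h$. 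Since $x_1$ is unimodular and isotropic, $\sigma(x_1)$ is a unimodular isotropic vector. The key splitting step is to use the transitivity of $\EO_A(Q,H(P))$ on unimodular isotropic vectors whose hyperbolic-component has rank $>d$: by a version of the DSER/Eichler transitivity lemma (as in Theorem~\ref{RARII} of \cite{MR748229} applied to the ambient module, or directly via the transformations $E_{\alpha_{ij}}, E^*_{\beta_{ij}}$ and the Witt-index hypothesis $\mathrm{rank}(P) > d$ together with the generalized dimension function), there exists $\varepsilon\in\EO_A(Q,H(P))$ with $\varepsilon\sigma(x_1) = x_1$. The diagonalizability of $Q$ is what lets one invoke the cancellation/transitivity results in the required generality over a ring of generalized dimension $d$.

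Next, having arranged $\varepsilon\sigma(x_1) = x_1$, I would use a second elementary transformation to also fix $f_1$: write $(\varepsilon\sigma)(f_1) = (z';p',q') + c x_1 + f_1$ (the $f_1$-coefficient is forced to be a unit, and after scaling it is $1$ since $\langle x_1,(\varepsilon\sigma)(f_1)\rangle = 1$), then apply a suitable $E^*_{\beta}$-type transvection along $x_1$ (which fixes $x_1$) to kill the $Q\perp H(P_0)$-component, and a transvection in the plane $h$ to kill the $cx_1$ term; all these lie in $\EO_A(Q,H(P))$. After this, $\varepsilon'\varepsilon\sigma$ fixes the hyperbolic plane $h$ pointwise, hence by orthogonality preserves its complement $Q\perp H(P_0)$, so $\varepsilon'\varepsilon\sigma \in \O_A(Q\perp H(P_0))\times\{1_h\}$. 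Now induct: if $\mathrm{rank}(P_0) = m-1 > d$ still, the inductive hypothesis gives $\varepsilon'\varepsilon\sigma = \eta\cdot\beta_0$ with $\eta\in\EO_A(Q,H(P_0))\subseteq\EO_A(Q,H(P))$ and $\beta_0\in\O_A(H(P_0))$; peeling off one more plane if needed, we drive down to $m = d+1$ as the base case, where the statement is essentially Theorem~\ref{RARII} itself, or is handled directly. Thus $\sigma \in \EO_A(Q,H(P))\cdot\O_A(H(P))$.

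For the reverse inclusion and the various orderings of the product: $\EO_A(Q,H(P))\cdot\O_A(H(P))\subseteq\O_A(Q\perp H(P))$ is immediate since both factors are subgroups of $\O_A(Q\perp H(P))$. To get the normality-like symmetry — that the set equals $\O_A(H(P))\cdot\EO_A(Q,H(P))$ and that the factors may be written in either order — I would invoke the conjugation identity for DSER transformations, property (iv) above, $\sigma\Sigma_{u,v,q(v)}\sigma^{-1} = \Sigma_{\sigma u,\sigma v,q(v)}$: for $\beta\in\O_A(H(P))$ and a generator $E_{\alpha_{ij}} = \Sigma_{x_i,w_{ij},q(w_{ij})}$, conjugation by $\beta$ (which acts only on $H(P)$, fixing $Q$) sends it to $\Sigma_{\beta x_i, \beta w_{ij}, q(w_{ij})}$, and since $\beta x_i\in P\subseteq H(P)$ is still unimodular isotropic and $\beta w_{ij}$ stays in an appropriate summand, this is again an element of $\EO_A(Q,H(P))$; hence $\EO_A(Q,H(P))$ is normalized by $\O_A(H(P))$, which gives all four equalities at once. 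I expect the main obstacle to be the first splitting step: proving that $\EO_A(Q,H(P))$ acts transitively on the relevant unimodular isotropic vectors over a ring of generalized dimension $d$ with $\mathrm{rank}(P) > d$ — this is where the dimension hypothesis, the diagonalizability of $Q$, and the commutator/transvection calculus of \S 2 all have to be combined carefully (and is precisely the content that Theorem~\ref{RARII}, quoted here from \cite{MR748229}, packages), whereas the subsequent normalization and bookkeeping are routine.
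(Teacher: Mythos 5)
This statement is not proved in the paper at all: it is imported verbatim from \cite{MR748229} (Theorem~2.5 there), so any self-contained argument here would have to actually reprove Rao's theorem. Your proposal does not do that. Its central step --- that $\EO_A(Q,H(P))$ moves $\sigma(x_1)$ back to $x_1$ (transitivity on the relevant unimodular isotropic vectors under the hypotheses ``generalized dimension $\ge d$'', ``$Q$ diagonalizable'', ``rank$(P)>d$'') --- is exactly the substance of the theorem, and you defer it to ``a version of the DSER/Eichler transitivity lemma (as in Theorem~\ref{RARII} \dots)'', conceding at the end that this ``is precisely the content that Theorem~\ref{RARII} packages''. That is circular: the hard part, which in Rao's paper rests on the Eisenbud--Evans/Plumstead generalized-dimension machinery together with the diagonal form of $Q$, is assumed rather than proved.

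There is also a concrete error in the reduction you run after fixing $x_1$. You claim that the transvections needed to normalize $(\varepsilon\sigma)(f_1)$ --- one killing the $H(P_0)$-component, and ``a transvection in the plane $h$ to kill the $cx_1$ term'' --- ``all lie in $\EO_A(Q,H(P))$''. They do not. Roy's group is generated only by $E_\alpha$, $E^*_\beta$ with $\alpha\in\hom(Q,P)$, $\beta\in\hom(Q,P^*)$, i.e.\ Eichler transformations $\Sigma_{x_i,w,q(w)}$, $\Sigma_{f_i,v,q(v)}$ whose second vector lies in $Q$; an Eichler transformation with both vectors inside $H(P)$ (which is what killing an $H(P_0)$-component or an $x_1$-term in the image of $f_1$ requires) is an element of the orthogonal/elementary group of $H(P)$, not of $\EO_A(Q,H(P))$. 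Indeed, if your reduction worked one would conclude $\O_A(Q\perp H(P))=\EO_A(Q,H(P))\cdot\O_A(Q\perp H(P_0))$ with the residual factor trivial on $h$, which is stronger than the theorem and is false in general --- the whole point of the statement is that the leftover factor can only be pushed into $\O_A(H(P))$. (Your closing normality argument is also shakier than needed: conjugating $\Sigma_{x_i,w_{ij},q(w_{ij})}$ by $\beta\in\O_A(H(P))$ gives $\Sigma_{\beta x_i,w_{ij},q(w_{ij})}$ with $\beta x_i$ an arbitrary isotropic vector of $H(P)$, and it is not immediate that this lies in Roy's group; but this is beside the point, since once $\O_A(Q\perp H(P))=\EO_A(Q,H(P))\cdot\O_A(H(P))$ is known, taking inverses of the group on the left gives the product in the opposite order for free.)
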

\begin{lemma}\textbf{\em {(Dilation Lemma)}}\label{d2} Let $A$ be a commutative ring with generalized dimension $\ge d$ and $Q$ be a free module of rank $n$. Let $(Q, q)$ be a diagonalizable quadratic $A$-space. Let $s$ be a non-nilpotent element of $A$ and $m > d$. Let $\theta(X) \in \O_{A[X]}(Q\otimes A[X]\perp h^m)\cdot \O_{A[X]}(h^m)$ with $\theta(0)=Id.$ If $\theta_s(X)=(\theta(X))_s \in \EO_{{A[X]}_s}(Q\otimes {A[X]}_s \perp h^m)\cdot \O_{{A[X]}_s}(h^m),$ then for $d \gg 0$ and for all $b \in (s)^d A$, we have $\theta(bX) \in \EO_{A[X]}(Q\otimes A[X] \perp h^m)\cdot \O_{A[X]}(h^m)$.
\end{lemma}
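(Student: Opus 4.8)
The strategy is to bootstrap from the already-established Dilation Lemma (Lemma~\ref{l10}) for the pure elementary orthogonal group, using Theorem~\ref{RARII} to absorb the $\O_{A[X]}(h^m)$-part. Write $M = Q\otimes A[X] \perp h^m$, so $h^m = H(A[X]^m)$ and $M$ has the shape $Q'\perp H(P)$ with $P$ free of rank $m > d$. By hypothesis $\theta_s(X) = \varepsilon_s(X)\cdot\beta_s(X)$ with $\varepsilon_s(X) \in \EO_{A_s[X]}(Q\otimes A_s[X]\perp h^m)$ and $\beta_s(X) \in \O_{A_s[X]}(h^m)$. Since $\theta(0) = Id$, evaluating at $X=0$ gives $\beta_s(0) = \varepsilon_s(0)^{-1}\in \O_{A_s}(h^m)\cap \EO_{A_s[X]}(\ldots)|_{X=0}$; replacing $\varepsilon_s(X)$ by $\varepsilon_s(X)\varepsilon_s(0)^{-1}$ and $\beta_s(X)$ by $\varepsilon_s(0)\beta_s(X)$ — note $\varepsilon_s(0)\in\EO_{A_s}$, which need not preserve $h^m$, so this needs care — I instead normalize differently: set $\eta_s(X) = \beta_s(0)^{-1}\theta_s(X) = \beta_s(0)^{-1}\varepsilon_s(X)\beta_s(X)$, and observe $\beta_s(0)^{-1}\varepsilon_s(X)\beta_s(0)\in\EO_{A_s[X]}$ by normality of the elementary subgroup under $\O$, so $\eta_s(X)\in\EO_{A_s[X]}(\ldots)\cdot\O_{A_s[X]}(h^m)$ still, with $\eta_s(0)=Id$ and the $\O(h^m)$-factor now trivial at $X=0$.

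The heart of the argument is then to run the same machine as in Lemma~\ref{l10} but for the larger generating set that includes the elementary generators of $\O_{A[X]}(h^m)$ sitting inside the $H(P)$-summand. Concretely, Theorem~\ref{RARII} (together with the cancellation/stability hypotheses $m>d$, diagonalizable $Q$) lets us write $\theta_s(X) = \prod_k E(\alpha_{i_kj_k}(X))\cdot\beta_s(X)$, and since $m>d$ the orthogonal transformations of $h^m$ that we need are themselves products of elementary (Eichler) transformations of the hyperbolic space, which are of the form $E_{\alpha_{ij}}$ or $E^*_{\beta_{ij}}$ with the target $P$-indices in the range $1\le i\le m$. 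So after applying the splitting property and Lemma~\ref{l08}-style rewriting, $\eta_s(X)$ is a product $\prod_k \gamma_k E(X\alpha'_{i_kj_k}(X))\gamma_k^{-1}$ with $\gamma_k\in\EO_{A_s}$, exactly as in the proof of Lemma~\ref{l10}. The claim inside that proof — that conjugating $E(s^dxZ_{ij})$ by a bounded product of elementary generators yields a product $\prod_t E(s^{d_t}x_tZ_{i_tj_t})$ with $d_t\to\infty$ as $d\to\infty$ — goes through verbatim, since it only uses Lemma~\ref{l09} (the commutator identities of \S4), which is stated for general free $Q,P$ and hence applies with $P$ of rank $m$. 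Thus $\eta_s(s^dX) = \prod E(s^{d_t}x_tZ_{i_tj_t})$ lifts to $\EO_{A[X]}(M[X])$ for $d\gg 0$, giving $\eta(bX)\in\EO_{A[X]}(M[X])$ for all $b\in(s)^dA$, and multiplying back by $\beta(0)\in\O_A(h^m)$ — which is unaffected by the substitution $X\mapsto bX$ since it is constant in $X$ — yields $\theta(bX)\in\EO_{A[X]}(M[X])\cdot\O_{A[X]}(h^m)$.

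The main obstacle I expect is bookkeeping around the $\O(h^m)$-factor: one must verify that the $\beta_s(X)$-part can be pushed to one side, normalized so that $\beta_s(0)=Id$ modulo an elementary factor, and that it survives the dilation $X\mapsto bX$ untouched because it is the value of a constant (in $X$) orthogonal transformation — but in fact $\beta_s(X)$ need not be constant in $X$. The honest fix, which I would spell out, is to first apply Lemma~\ref{l10} logic to rewrite $\beta_s(X)$ itself: since $\O_{A_s[X]}(h^m) = \O_{A_s}(h^m)\cdot\EO_{A_s[X]}(h^m)$ by the $K_1$-analogue of Quillen's principle applied to $h^m$ alone (or by invoking Theorem~\ref{t00} for the module $M' = 0\perp h^m$, after checking $\theta(0)=Id$ reduces us to $\EO$), one gets $\beta_s(X) = \beta_s(0)\cdot\varepsilon'_s(X)$ with $\varepsilon'_s(X)\in\EO_{A_s[X]}(h^m)\subseteq\EO_{A_s[X]}(M_s[X])$. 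Then $\theta_s(X)\beta_s(0)^{-1}$, suitably conjugated, lies in $\EO_{A_s[X]}(M_s[X])$ with value $Id$ at $X=0$, Lemma~\ref{l10} applies directly, and $\beta_s(0)^{-1}$ — genuinely constant — is put back at the end, landing in $\O_{A[X]}(h^m)$. The delicate point throughout is ensuring all the constant-term manipulations respect the splitting $\O_A(h^m)$ versus $\EO_A(\ldots)$, which is precisely what Theorem~\ref{RARII} guarantees under the hypotheses $m>d$ and $Q$ diagonalizable.
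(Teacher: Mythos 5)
The decisive step you are missing is the paper's normalization, which needs neither normality of $\EO$ in $\O$ nor any elementarization of the $\O(h^m)$-factor. Write $\theta_s(X)=\varepsilon(X)\beta(X)$ with $\varepsilon(X)\in\EO_{A_s[X]}(Q\otimes A_s[X]\perp h^m)$ and $\beta(X)\in\O_{A_s[X]}(h^m)$. Since $\theta(0)=Id$ one has $\varepsilon(0)\beta(0)=Id$, i.e.\ $\varepsilon(0)^{-1}=\beta(0)$, and therefore $\theta_s(X)=\{\varepsilon(X)\varepsilon(0)^{-1}\}\{\beta(0)^{-1}\beta(X)\}$: the inserted constants cancel, the first factor is again elementary and equals $Id$ at $X=0$, the second again lies in $\O_{A_s[X]}(h^m)$ and equals $Id$ at $X=0$. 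So one may assume from the outset that $\varepsilon(0)=\beta(0)=Id$ and then run the proof of Lemma~\ref{l10}, keeping the $\O(h^m)$-factor in its own slot throughout. Your two substitutes for this step both have genuine gaps. Conjugating $\varepsilon_s$ by $\beta_s(0)$ ``by normality of the elementary subgroup under $\O$'' invokes a fact proved nowhere in the paper; Theorem~\ref{RARII} only gives the set equality $\EO\cdot\O(h^m)=\O(h^m)\cdot\EO$, which does not imply normality. Your ``honest fix'' needs $\O_{A_s[X]}(h^m)=\O_{A_s}(h^m)\cdot\EO_{A_s[X]}(h^m)$: Theorem~\ref{t00} cannot deliver this (it is a local-global statement over $A[X]$, not a factorization over a localization, and for $Q=0$ the paper's $\EO$ is generated by $E_\alpha$, $E^*_\beta$ with $\alpha\in\hom(0,P)=0$, hence is trivial), and the homotopy-invariance statement you actually want for the orthogonal group of $h^m$ is a regularity phenomenon (Karoubi, Suslin--Kopeiko) not available over a general localization $A_s$. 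Likewise, your assertion that the transformations of $h^m$ you need are ``of the form $E_{\alpha_{ij}}$ or $E^*_{\beta_{ij}}$'' confuses internal Eichler transformations of $H(P)$ (both vectors inside $H(P)$) with Roy's generators (a basis vector of $P$ or $P^*$ paired with a vector of $Q$); the commutator calculus of Section~4, hence Lemma~\ref{l09} and the Claim inside the proof of Lemma~\ref{l10}, applies only to the latter.

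Even granting your factorization, the final assembly does not yield the conclusion. The constant $\beta_s(0)$ is a matrix over $A_s$, not over $A$, so ``putting it back at the end'' does not land in $\O_{A[X]}(h^m)$; and $\theta_s(X)\beta_s(0)^{-1}$ is not the localization of any element of $\O_{A[X]}(M[X])$, so Lemma~\ref{l10} does not ``apply directly'' to it. In the paper's reduction no constant $A_s$-factor survives: after the normalization above, the elementary factor dilates (via Lemma~\ref{l09}) to a product of generators whose coefficients carry high powers of $s$ and hence come from $A$, while the $\O(h^m)$-factor, having constant term $Id$, dilates within the $\O(h^m)$-slot to something defined over $A[X]$. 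That is the precise sense in which ``the rest of the proof follows from Lemma~\ref{l10}'', and it is exactly the bookkeeping your version loses.
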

\begin{proof} The proof is similar to Lemma~\ref{l10}. For, if $\theta_s(X) = \varepsilon(X) \beta(X)$ with \\$\varepsilon(X) \in \EO_{{A[X]}_s}(Q\otimes {A[X]}_s \perp h^m)$, $\beta(X) \in \O_{{A[X]}_s}(h^m)$, then $\theta(0) = I = \varepsilon(0)\beta(0);$ whence $\theta_s(X) = \{\varepsilon(X)\varepsilon(0)^{-1}\}\{\beta(0)^{-1} \beta(X)\}$.
In other words, we may assume at the onset that $\varepsilon(0) = Id$ and $\beta(0) = Id.$ The rest of the proof follows from Lemma~\ref{l10}.
\end{proof}
\begin{theorem}\textbf{\emph {(Local-Global Principle)}}\label{l2} 
 Let $A$ be a commutative ring with generalized dimension $\ge d$ and let $(Q, q)$ be a 
diagonalizable quadratic $A$-space. Assume that $Q$ is a free
module of rank $n$. Let $m > d$ and let $\theta(X) \in \O_{A[X]}(Q\otimes A[X]\perp h^m)$ with $\theta(0) =
Id.$ If $\forall\; \mathfrak{m} \in \max(A)$, $\alpha_{\mathfrak{m}}=
\beta_{\mathfrak{m}}\gamma_{\mathfrak{m}}$, where $\beta_{\mathfrak{m}} \in
\EO_{{A[X]}_{\mathfrak{m}}}\left((Q\otimes A[X])_{\mathfrak{m}}\perp h^m \right)$,
$\gamma_{\mathfrak{m}}\in \O_{{A[X]}_{\mathfrak{m}}}(h^m)$ with
$\beta(0) = Id, \gamma(0) = Id$. Then $\alpha = \beta\gamma$ with
$\beta \in \EO_{A[X]}((Q\otimes A[X])\perp h^m), \gamma \in
\O_{A[X]}(h^m)$.
\end{theorem}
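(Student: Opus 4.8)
The plan is to mimic the structure of the proof of Theorem~\ref{t00}, but carrying along the extra $\O(h^m)$ factor throughout, and to use the Dilation Lemma~\ref{d2} in place of Lemma~\ref{l10}. First I would reduce to the case $\theta(0) = Id$, which is the hypothesis, and introduce $\kappa(X,Y) = \theta(X+Y)\theta(Y)^{-1} \in \O_{A[X,Y]}((Q\otimes A[X,Y])\perp h^m)$, viewing $A[Y]$ as the base ring; then $\kappa(0,Y) = Id$. For each maximal ideal $\mathfrak{m}$ of $A$, the hypothesis together with Theorem~\ref{RARII} (applied over $A_{\mathfrak{m}}[X]$, whose generalized dimension is still $\ge d$ since $m > d$) gives a factorization of $\theta(X)_{\mathfrak{m}}$ of the required type; choosing $s_{\mathfrak{m}} \in A\setminus\mathfrak{m}$ to clear denominators, we get $\theta(X)_{s_{\mathfrak{m}}} \in \EO_{A_{s_{\mathfrak{m}}}[X]}((Q\otimes A_{s_{\mathfrak{m}}}[X])\perp h^m)\cdot\O_{A_{s_{\mathfrak{m}}}[X]}(h^m)$, hence the same holds for $\kappa(X,Y)_{s_{\mathfrak{m}}}$ over $A_{s_{\mathfrak{m}}}[Y]$.

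Next I would apply the Dilation Lemma~\ref{d2} with $A[Y]$ as the base ring to $\kappa(X,Y)_{s_{\mathfrak{m}}}$: this yields $\kappa(b_{\mathfrak{m}}X,Y) \in \EO_{A[X,Y]}((Q\otimes A[X,Y])\perp h^m)\cdot\O_{A[X,Y]}(h^m)$ for $b_{\mathfrak{m}} \in (s_{\mathfrak{m}}^N)$, $N \gg 0$. By quasi-compactness of $\max(A)$ choose finitely many $\mathfrak{m}_1,\dots,\mathfrak{m}_r$ with $\sum_i (s_{\mathfrak{m}_i}^{N_i}) = A$, hence $\sum_i d_i b_{\mathfrak{m}_i} = 1$ for suitable $d_i \in A$ and $b_{\mathfrak{m}_i} \in (s_{\mathfrak{m}_i}^{N_i})$. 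Then, exactly as in the proof of Theorem~\ref{t00}, write the telescoping identity
\begin{equation*}
\theta(X) = \prod_{i=1}^{r-1}\kappa\!\left(d_ib_{\mathfrak{m}_i}X,\, T_i\right)\cdot\kappa\!\left(d_rb_{\mathfrak{m}_r}X,\,0\right),\qquad T_i = \sum_{k=i+1}^r d_kb_{\mathfrak{m}_k}X,
\end{equation*}
so each factor lies in $\EO_{A[X]}((Q\otimes A[X])\perp h^m)\cdot\O_{A[X]}(h^m)$.

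The one genuine subtlety, which I would handle carefully, is that the set $G = \EO_{A[X]}((Q\otimes A[X])\perp h^m)\cdot\O_{A[X]}(h^m)$ is \emph{a priori} only a product of subgroups, not obviously closed under multiplication; so to conclude from the displayed telescoping product that $\theta(X) \in G$ I need $G$ to be stable under the relevant operations. This is precisely where Theorem~\ref{RARII} does the work over $A[X]$ (which has generalized dimension $\ge d$): it shows $G = \O_{A[X]}(h^m)\cdot\EO_{A[X]}((Q\otimes A[X])\perp h^m)$ as well, and more importantly that $\O_{A[X]}(h^m)$ normalizes $\EO_{A[X]}((Q\otimes A[X])\perp h^m)$ up to elements of $G$, since $\EO$ is normal in $\O$ (cf. property (iv) of the Eichler transformations and Theorem~\ref{RARII}). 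Using this one checks that a product of elements of $G$, each coming with $\beta$-part evaluating to $Id$ at $X=0$ (which we arrange as in the proof of Lemma~\ref{d2}), again lies in $G$ with $\beta(0)=\gamma(0)=Id$. I expect the bookkeeping of these ``$(0)=Id$'' normalizations through the telescoping product — rather than any new commutator identity — to be the main obstacle; everything else is a direct transcription of the proof of Theorem~\ref{t00} with Lemma~\ref{d2} substituted for Lemma~\ref{l10}.
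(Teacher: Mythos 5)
Your proposal is correct and follows essentially the same route as the paper: reduce to $\kappa(X,Y)=\theta(X+Y)\theta(Y)^{-1}$, apply the Dilation Lemma~\ref{d2} over the base ring $A[Y]$, patch with $\sum_i d_i b_{\mathfrak{m}_i}=1$ via the telescoping product, and use Theorem~\ref{RARII}'s equality $\EO\cdot\O(h^m)=\O(h^m)\cdot\EO$ to keep products of the form $\varepsilon_1\eta_1\varepsilon_2\eta_2$ inside $\EO\cdot\O(h^m)$, which is exactly the paper's rewriting of Equation~\eqref{eqk} as $\varepsilon_1\varepsilon_2'\eta_1'\eta_2$. Your parenthetical appeal to normality of $\EO$ in $\O$ is neither proved in the paper nor needed — the two-sided product equality from Theorem~\ref{RARII} already yields the closure you want — so drop that remark and the argument stands as in the paper.
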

 \begin{proof} The proof follows in similar lines as Theorem \ref{t00}
except for the following.  Let $\mathfrak{m}$  be a maximal ideal of
$A$. Choose an element $s_\mathfrak{m}$ from $A\setminus \mathfrak{m}$
such that \[\theta(X)_{s_\mathfrak{m}}\in
EO_{{A[X]}_{s_\mathfrak{m}}}\left(Q\otimes {A[X]}_{s_\mathfrak{m}}\perp
h^m \right)\O_{{A[X]}_{s_\mathfrak{m}}}(h^m).\] Define
\begin{equation*} \kappa(X,Y) = \theta(X+Y)\theta(Y)^{-1}.
\end{equation*} Then
\begin{equation}\label{eqk} \kappa(X,Y) =
\varepsilon_1\eta_1\varepsilon_2\eta_2  
\end{equation} for $\varepsilon_1, \varepsilon_2 \in
EO_{A[X,Y]}(Q\otimes A[X,Y]\perp h^m)$ and $\eta_1, \eta_2 \in
\O_{A[X,Y]}(h^m)$. Since 
\[\EO_{A[X,Y]}(Q\otimes A[X,Y]\perp h^m)\cdot \O_{A[X,Y]}(h^m) = \O_{A[X,Y]}(h^m)\cdot \EO_{A[X,Y]}(Q\otimes
A[X,Y]\perp h^m),\] by Theorem~\ref{RARII}, we can write Equation~\eqref{eqk} as
$$\kappa(X,Y) = \varepsilon_1\varepsilon'_2\eta'_1\eta_2$$ for some $\varepsilon'_2 \in \EO_{A[X,Y]}(Q\otimes A[X,Y]\perp h^m)$ and $\eta'_1 \in \O_{A[X,Y]}(h^m)$.
 
Then $\kappa(X,Y)_{s_\mathfrak{m}} \in
EO_{{A[X,Y]}_{s_\mathfrak{m}}}(Q\otimes {{A[X,Y]}_{s_\mathfrak{m}}} \perp
h^m)\cdot \O_{{A[X,Y]}_{s_\mathfrak{m}}}(h^m)$ and $\kappa(0,Y)= Id$.

Therefore, by applying Lemma~\ref{d2} with base ring
$A[Y]$, \[\kappa(b_\mathfrak{m}X,Y)\in \EO_{A[X,Y]}(Q\perp A[X,Y]
\perp h^m)\cdot \O_{A[X,Y]}(h^m),\]  where $b_\mathfrak{m} \in
(s_\mathfrak{m}^N)$ for some $N \gg 0$.

\end{proof}
\section{Extendability of Quadratic Spaces}

In this section, we apply the Local Global principle to prove the principal result [Theorem~\ref{Main}] on 
the extendability of quadratic $A[T]$-spaces of Witt index $\ge d$ over an equicharacteristic regular local
ring of dimension $d$. 

We begin with the following crucial observation.
\begin{lemma}\label{Pop} Let $A$ be a regular local ring containing a field. Let $(Q, q) \perp h$ be a quadratic $A[T]$-space.
If $(Q/TQ \perp h)$ is hyperbolic, then $(Q, q) \perp h$ is hyperbolic.
\end{lemma}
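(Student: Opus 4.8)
The plan is to reduce the statement to a known extendability/cancellation result for quadratic spaces over $A[T]$ by first establishing a ``stable'' form of the conclusion and then using the hyperbolic plane $h$ to do the cancellation. First I would observe that since $A$ is a regular local ring containing a field, a theorem of Lindel (the Bass–Quillen conjecture in the geometric/equicharacteristic case, together with its quadratic analogue, or directly the Suslin–Kopeiko result quoted in the introduction) tells us that projective $A[T]$-modules are extended from $A$, and in fact that quadratic $A[T]$-spaces which are ``stably extended'' with large Witt index are extended. So the first step is to pass from $(Q,q)\perp h$ to $(Q,q)$: since $Q/TQ \perp h$ is hyperbolic, it is in particular extended from $A$ (a hyperbolic space $H(P)$ over $A[T]$ with $P$ extended is extended), and one wants to conclude that $(Q,q)\perp h$ itself is extended from $A$, hence equals $(Q_0,q_0)\perp h$ over $A[T]$ where $Q_0 = Q/TQ$.

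The key second step is then a cancellation: we would have $(Q_0,q_0)\perp h \cong H(P_0)$ over $A[T]$ for the appropriate $P_0$ (the hyperbolic space is extended, so $P_0$ is extended from $A$), and we want to cancel the hyperbolic plane $h$ to conclude $(Q,q)\perp h$ is hyperbolic. Here I would invoke the cancellation theorem for quadratic spaces over the polynomial ring $A[T]$ when the Witt index is large enough relative to the dimension — this is exactly the regime in which Theorem~\ref{RARII} and the Local–Global machinery of this paper operate, and over a regular local ring $A$ of dimension $d$ the relevant bound on the hyperbolic rank is met once we have at least one hyperbolic plane split off together with whatever splitting $Q_0$ already has. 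Concretely, $(Q_0 \perp h)$ being hyperbolic forces $Q_0$ to have Witt index one less, so $Q$ has a hyperbolic summand, and then the Local–Global Principle (Theorem~\ref{t00}) lets us patch the local isometries $Q_{\mathfrak{m}}[T]\perp h \cong H(P)_{\mathfrak{m}}[T]$ — which exist because over the local rings $A_{\mathfrak{m}}$ extendability is classical — into a global isometry.

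I expect the main obstacle to be organizing the descent/patching correctly: one must produce, for each maximal ideal $\mathfrak{m}$ of $A$ (here $A$ is already local, so this is really about localizing at non-nilpotent elements $s$ and working over $A_s$), an isometry of $(Q\perp h)_s$ with a hyperbolic space after a change of variable $T \mapsto bT$, and then show these glue. The technical heart is therefore the interplay between the Dilation Lemma (Lemma~\ref{l10}) and the fact that isometries to a fixed hyperbolic target can be transported by elementary orthogonal transformations; the hyperbolic summand $h$ is precisely what gives us room to apply the DSER transformations. A secondary subtlety is checking the dimension/Witt-index bookkeeping: we need the Witt index of $(Q_0\perp h)$, which is at least $d+1$ if $Q_0$ already had Witt index $\ge d$, but in the statement we are only told $(Q_0\perp h)$ is hyperbolic, so we must extract from that hypothesis enough splitting of $Q$ to feed into Theorem~\ref{RARII}. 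Once that bookkeeping is in place, the conclusion that $(Q,q)\perp h$ is hyperbolic follows by combining extendability over $A$ with the Local–Global cancellation over $A[T]$.
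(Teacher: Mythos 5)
There is a genuine gap: your proposal misses the one idea the paper's proof actually turns on, namely a limit argument. Since $A$ is an arbitrary regular local ring containing a field (not essentially of finite type), the paper invokes Popescu's general N\'eron desingularization \cite{MR0818160} to write $A$ as a filtered inductive limit of regular local rings essentially of finite type over the residue field; the finitely many data defining $(Q,q)\perp h$ and the isometry making $(Q/TQ)\perp h$ hyperbolic descend to some such ring $B$, and then Proposition~1.3 of \cite{MR727375} (the geometric case, already known from Rao's thesis) gives hyperbolicity over $B[T]$, hence over $A[T]$ by base change. Nothing in your outline performs this reduction, and without it none of the results you quote are applicable to a general equicharacteristic regular local ring.

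Moreover, the route you sketch would not work even as a substitute. First, it is circular: $A$ is local, so localizing at maximal ideals returns $A$ itself, the Local--Global Principle (Theorem~\ref{t00}) is vacuous, and your assertion that ``over the local rings $A_{\mathfrak{m}}$ extendability is classical'' is precisely the statement at issue --- extendability over a general equicharacteristic regular local ring is the main theorem of this paper, and Lemma~\ref{Pop} is an ingredient of its proof, so it cannot be derived from it. Second, the quantitative tools you invoke carry hypotheses the lemma does not have: Suslin--Kopeiko needs Witt index of the reduction strictly larger than $\dim A$, and Theorem~\ref{RARII} needs hyperbolic rank exceeding the (generalized) dimension, whereas here the rank of $Q$ may be arbitrarily small compared to $d=\dim A$ (the hypothesis only says the reduction mod $T$ is hyperbolic, which over the local ring $A$ forces $Q/TQ$ itself to be hyperbolic by Witt cancellation, but gives no lower bound on rank relative to $d$). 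Finally, Lindel's theorem concerns projective modules, not quadratic spaces, and does not by itself yield the quadratic extendability you need. The correct and much shorter argument is the Popescu-plus-descent reduction to the essentially-of-finite-type case followed by the citation of Rao's Proposition~1.3.
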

\begin{proof} In \cite{MR0818160}, D. Popescu showed that
if $A$ is a geometrically regular local ring (over a field $k$), or when
the characteristic of the residue field is a regular parameter in
$A$, then it is a filtered inductive limit of regular local rings
essentially of finite type over the integers (or over $k$).

In view of this, we may regard $(Q, q) \perp h$ to be a
quadratic $B[T]$-space over some regular local ring B essentially
of finite type over $k$ with $(Q/TQ, q/(T)) \perp h$ hyperbolic.
In view of Proposition~1.3 of \cite{MR727375},  $(Q, q) \perp h$ is hyperbolic over
$B[T]$, whence over $A[T]$.
\end{proof}
\begin{theorem}\label{Main} Let $(A,\mathfrak{m})$ be an equicharacteristic
regular local ring of dimension $d$ and $2\in A^*$. Then every quadratic
$A[T]$-space $(Q, q)\perp h^n$ with Witt index $n\geq d$ is extended from
$A$.
\end{theorem}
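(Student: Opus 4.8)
The plan is to reduce the statement over the (possibly non-complete) equicharacteristic regular local ring $(A,\mathfrak m)$ to the already-known case of a power series ring over a field, and to effect this reduction by a patching argument in the spirit of Roy's paper \cite{MR0656854}, using the Local-Global Principle (Theorem~\ref{t00}, Theorem~\ref{l2}) as the technical engine that makes the patching legal. First I would peel off a hyperbolic plane: writing $Q = Q_0 \perp H(P)$ with $\operatorname{rank} P \ge n-1$ (so that $(Q,q)\perp h^n = Q_0 \perp h \perp h^{n-1}\perp H(P)$ has a large hyperbolic summand), I want to arrange that $Q_0/TQ_0 \perp h$ is of a form to which Lemma~\ref{Pop} applies, so that after a preliminary normalization the remaining obstruction is to extend an \emph{isometry} rather than the whole space. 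Concretely, the reduction $Q_0$ modulo $T$ has Witt index $\ge d$ over $A$, and by the classical result over a local ring (Suslin--Kopeiko, or the $\EO$-computations already in hand) one expects $(Q,q)\perp h^n$ to be \emph{stably} extended, or extended after a ground-ring extension; the job is to descend this.

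Next I would set up the patching square. By Popescu's theorem (as used in Lemma~\ref{Pop}) $A$ is a filtered direct limit of regular local rings essentially of finite type over the prime field, so finite-presentation arguments let me assume $A$ is of finite type over a field; then I would pass to the completion $\widehat A$, which is a power series ring $k[[x_1,\dots,x_d]]$ over a field, and apply the known extendability result there (the thesis case, \cite{MR727375}, \cite{MR748229}). This yields an isometry $\varphi$ over $\widehat A[T]$ carrying $Q\otimes \widehat A[T]$ to an extended space $(Q/TQ)\otimes \widehat A[T]$. To glue $\varphi$ with the trivial isometry on a Zariski neighborhood I compare them over the punctured/localized ring: the discrepancy is an automorphism $\theta(T)$ of the quadratic $\widehat A_s[T]$-space with $\theta(0)=\operatorname{Id}$, lying in $\O(M[T])$ over the localization. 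Here is where the Local-Global Principle enters: because $M = Q'\perp H(P)$ has a hyperbolic summand of rank $>d$, Theorem~\ref{RARII} lets me write any such $\theta$ as a product of an element of $\EO$ and an element of $\O(H(P))$ stabilizing the hyperbolic part, and Theorem~\ref{l2} (together with the Dilation Lemma~\ref{d2}) shows that an isometry which becomes "elementary up to $\O(h^m)$" locally at every maximal ideal is globally of that form. The Dilation Lemma is what allows the clutching datum, which a priori only makes sense after inverting $s$, to be pulled back to an honest global elementary automorphism after rescaling $T \mapsto bT$.

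The endgame is a standard Quillen-type gluing: one uses the Local-Global Principle to split the transition isometry $\theta$ over the overlap $\widehat A_s$ into a piece extendable to all of $\widehat A$ and a piece extendable to the complementary open (the locus where $s$ is a unit), and then the Milnor patching of projective modules / quadratic spaces over the fibre product $A \to A_s \leftarrow (\text{something complete along } (s))$ produces a quadratic $A[T]$-space isometric to $Q$ which is visibly extended from $A$. One runs this for each $s = s_{\mathfrak m}$ from a partition of unity $\sum d_i s_{\mathfrak m_i} = 1$, exactly as in the telescoping identity $\theta(X) = \prod_i \kappa(d_i b_{\mathfrak m_i} X, T_i)\,\kappa(d_r b_{\mathfrak m_r}X,0)$ at the end of the proof of Theorem~\ref{t00}, to conclude that $\theta(T)\in \EO_{A[T]}$ and hence that the two local trivializations agree up to an elementary change of coordinates --- forcing $(Q,q)\perp h^n$ itself to be extended. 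The main obstacle I anticipate is precisely the control of the transition automorphism: verifying that it has a hyperbolic summand large enough for Theorem~\ref{RARII} to apply (which is why one needs Witt index $\ge d$ and not $\ge d+1$), and checking that the Dilation Lemma applies with $\widehat A[T]$ (or $A[Y]$) as base ring so that the rescaled clutching data genuinely descend; the $\EO$ versus $\O(H(P))$ bookkeeping, while conceptually clear, is where the argument could go wrong if the commutator calculus of \S\ref{} does not patch cleanly.
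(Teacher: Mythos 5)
Your sketch correctly identifies the general strategy (reduce to the complete case and patch, with the Local-Global Principle controlling the transition isometry), but the patching set-up you propose does not work, and this is the heart of the matter. You want to glue data over $A$ from the full $\mathfrak{m}$-adic completion $\widehat{A}=k[[x_1,\dots,x_d]]$ and a single localization $A_s$; but the square formed by $A\to\widehat{A}$ and $A\to A_s$ is not cartesian, so Milnor-type patching of quadratic spaces does not apply to it. Your own hedge ``the fibre product $A\to A_s\leftarrow(\text{something complete along }(s))$'' exposes the difficulty: the ring one must complete along is $(s)$, and the $(s)$-adic completion is not a power series ring over a field when $d>1$, so the known complete case cannot be invoked over it. The preliminary reduction you propose via Popescu is also not available for the full theorem: writing $A$ as a filtered limit of regular local rings essentially of finite type over the prime field does not preserve the relation between the Witt index and the dimension (the approximating rings can have dimension larger than $d$, so the hypothesis $n\ge\dim$ is not inherited), and the thesis results \cite{MR727375,MR748229} that you would then quote require the local ring of a smooth point of an affine variety over an \emph{infinite} field. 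In the paper Popescu's theorem is used only inside Lemma~\ref{Pop}, for the dimension-free statement that hyperbolicity of $(Q/TQ)\perp h$ forces hyperbolicity of $(Q,q)\perp h$; it is not used to reduce the main theorem to the finite-type case.

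What the paper actually does is complete one regular parameter at a time: with $A^l$ the $(\pi_1,\dots,\pi_l)$-adic completion, each square with corners $A^l[T]$, $A^{l+1}[T]$, $(A^l)_{\pi_{l+1}}[T]$, $(A^{l+1})_{\pi_{l+1}}[T]$ is cartesian as rings and, by \cite{MR752647}, for quadratic spaces; one then inducts on the number of uncompleted parameters, the base case $A^d\cong k[[X_1,\dots,X_d]]$ being \cite[Theorem~1.1]{MR727375}. Inside the induction step several ingredients absent from your outline are essential: Karoubi's theorem over the regular ring $A^{m-1}$ to get stable extendability, the cancellation theorem \cite[Theorem~3.3]{MR748229} over $(A^{m-1})_{\pi_m}[T]$ to strip the stabilizing $h^r$, diagonalizability of $Q_1$ over the relevant local rings so that Theorem~\ref{RARII} applies maximal-ideal-wise, the Local-Global Principle in the form of Theorem~\ref{l2} (not Theorem~\ref{t00} alone), the ``deep splitting'' technique of \cite{MR727375} to reduce the transition isometry $\widetilde{\sigma}\widetilde{\tau}^{-1}=\alpha\beta$ to an element $\beta\in\O(h^n)$, and finally Lemma~\ref{Pop} together with cancellation over local rings to conclude that the patched space $Q_2=(h^n,\beta,h^n)$ is extended from $A^{m-1}$. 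Note also that the telescoping identity $\theta(X)=\prod_i\kappa(d_ib_{\mathfrak{m}_i}X,T_i)\,\kappa(d_rb_{\mathfrak{m}_r}X,0)$ you invoke is internal to the proof of Theorem~\ref{t00}; it is not re-run in the proof of Theorem~\ref{Main}, which uses the Local-Global Principle only as a black box applied over the non-local base $(A^m)_{\pi_m}$.
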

\begin{proof} 

Let $\{{{\pi}_1, {\pi}_2, \ldots, {\pi}_d}\}$ be a regular system of parameters
generating the maximal ideal $\mathfrak{m}$ of $A$.

Let $A^l$ denote the $(\pi_1, \ldots, \pi_l)$-adic completion of $A$. Observe
that $A^d$ is isomorphic to the power series ring $k[[X_1, \ldots, X_d]]$ by
Cohen structure theorem, where $k$ is the residue field $A/\mathfrak{m}$ of $A$.
Observe also that $A^l$ is the $(\pi_l)$-adic completion of $A^{l-1}$.

We now recall Amit Roy's garland of patching diagrams in \cite{MR0656854}:

  \begin{center}
\scalebox{0.88}{
\begin{tikzpicture}[xscale=1.3,yscale=-1.2, font=\scriptsize, swap]

  \node (A0_1) at (1, -0.7) {$A[T]$};
  \node (A0_10) at (9.4, -0.7) {$A^d[T]$};
  \node (A1_2) at (2, 0) {$A^1[T]$};  
  \node (A1_9) at (8.4, 0) {$A^{d-1}[T]$};
  \node (A2_3) at (3, 0.7) {$A^2[T]$};
  \node (A2_8) at (7.4, 0.7) {$A^{d-2}[T]$};
  \node (A4_5) at (4.5, 1.5) {$A^l[T]$};
  \node (A4_6) at (5.8, 1.5) {$A^{l+1}[T]$};


  \path (A0_1) edge [->]node [auto] {$\scriptstyle{}$} (A1_2);

  \node (A1_0) at (0.2, 0.6) {$A_{\pi_1}[T]$};
  \node (A1_11) at (10.2, 0.6) {$A^d_{\pi_d}[T]$};
  \node (A2_1) at (1.2, 1.4) {$A^1_{\pi_1}[T]$};
  \node (A2_2) at (2, 2.0) {$A^1_{\pi_2}[T]$};
  \node (A2_9) at (8.4, 2) {$A^{d-1}_{\pi_{d-2}}[T]$};
  \node (A2_10) at (9.2, 1.4) {$A^{d-1}_{\pi_d}[T]$};
  \node (A3_3) at (3, 2.7) {$A^2_{\pi_2}[T]$};
  \node (A3_8) at (7.4, 2.7) {$A^{d-2}_{\pi_{d-2}}[T]$};
  \node (A5_5) at (4.5, 3.3) {$A^l_{\pi_{l+1}}[T]$};
  \node (A5_6) at (5.8, 3.3) {$A^{l+1}_{\pi_{l+1}}[T]$};
  
  \path (A3_8) edge [->]node [auto] {$\scriptstyle{}$} (A2_9);
  \path (A0_1) edge [->]node [auto] {$\scriptstyle{}$} (A1_0);
  \path (A4_6) edge [->]                 ($(A4_6)!.45!(A2_8)$)
               ($(A4_6)!.45!(A2_8)$) edge[dotted]  ($(A4_6)!.60!(A2_8)$)
                ($(A4_6)!.60!(A2_8)$) edge[->]  (A2_8);
  \path (A2_3) edge [->]                 ($(A2_3)!.45!(A4_5)$) 
               ($(A2_3)!.45!(A4_5)$) edge[dotted] ($(A2_3)!.60!(A4_5)$)
               ($(A2_3)!.60!(A4_5)$) edge[->]     (A4_5);
  \path (A1_2) edge [->]node [auto] {$\scriptstyle{}$} (A2_2);
  \path (A0_10) edge [->]node [auto] {$\scriptstyle{}$} (A1_11);
  \path (A1_2) edge [->]node [auto] {$\scriptstyle{}$} (A2_1);
  \path (A2_8) edge [->]node [auto] {$\scriptstyle{}$} (A3_8);
  \path (A2_3) edge [->]node [auto] {$\scriptstyle{}$} (A3_3);
  \path (A4_5) edge [->]node [auto] {$\scriptstyle{}$} (A4_6);
  \path (A5_5) edge [->]node [auto] {$\scriptstyle{}$} (A5_6);
  \path (A2_10) edge [->]node [auto] {$\scriptstyle{}$} (A1_11);

  \path (A1_9) edge [->]node [auto] {$\scriptstyle{}$} (A0_10);
  \path (A1_9) edge [->]node [auto] {$\scriptstyle{}$} (A2_10);
  \path (A1_0) edge [->]node [auto] {$\scriptstyle{}$} (A2_1);
  \path (A2_2) edge [->]node [auto] {$\scriptstyle{}$} (A3_3);
  \path (A1_2) edge [->]node [auto] {$\scriptstyle{}$} (A2_3);
  \path (A2_8) edge [->]node [auto] {$\scriptstyle{}$} (A1_9);
  \path (A1_9) edge [->]node [auto] {$\scriptstyle{}$} (A2_9);
  \path (A4_5) edge [->]node [auto] {$\scriptstyle{}$} (A5_5);
  \path (A4_6) edge [->]node [auto] {$\scriptstyle{}$} (A5_6);
  
  \path (A3_3) edge [-,dotted] (A5_5);
  \path (A5_6) edge [-,dotted] (A3_8);
\end{tikzpicture}
}
\end{center}

We use it below. Let us concentrate on the patching square 
${\mathcal{P}}_l(A)[T]$:
 \[
  \begin{tikzpicture}[xscale=3,yscale=-1.5]
    \node (A0_0) at (0, 0) {$A^l[T]$};
    \node (A0_1) at (1, 0) {$A^{l+1}[T]$};
    \node (A1_0) at (0, 1) {$(A^l)_{\pi_{l+1}}[T]$};
    \node (A1_1) at (1, 1) {$(A^{l+1})_{\pi_{l+1}}[T]$};
    \path (A0_0) edge [->]node [auto] {$\scriptstyle{}$} (A0_1);
    \path (A0_0) edge [->]node [auto] {$\scriptstyle{}$} (A1_0);
    \path (A0_1) edge [->]node [auto] {$\scriptstyle{}$} (A1_1);
    \path (A1_0) edge [->]node [auto] {$\scriptstyle{}$} (A1_1);
  \end{tikzpicture}
  \]

For all $l$, this is a cartesian square as rings. Moreover, by \cite{MR752647},
it is also a cartesian square of quadratic spaces. This will enable us to analyze the quadratic $A$-space.

We prove the result by induction on $d - l$, starting with $l = 0$. In
this case $A$ is a complete equicharacteristic regular local ring, whence
a power series ring over its residue field. We appeal to
\cite[{Theorem~1.1}]{MR727375}.

Assume the result for $d-l = m$. For $d-(l + 1) = m - 1$,
consider the patching diagram  ${\mathcal{P}}_{m-1}(A)[T]$.

We fix some notations as folows:

For a regular parameter $\pi$ of $A$, let ${Q}^l = Q\otimes{A^l}[T]$, $Q^0 = Q$, ${Q^l}_{{\pi}} = Q\otimes {A^l}_{{\pi}}[T]$ and for a quadratic $A$-space $Q_1$, we denote  ${Q_1}\otimes{A^l}$ by ${Q_1}^l$.

Let $(Q \perp h^n )/(T) = Q_1 \perp h^n $, where $Q_1$ is the quadratic $A$-space $Q/(T)$. Since $A^{m-1}$ is local, ${Q_1}^{m-1}$ is diagonalizable \cite[{Proposition~3.4}]{MR0491773}. Since $A^{m-1}$ is regular, by Karoubi's theorem \cite[{Chapter~VII, Theorem~2.1}]{MR2235330}, $(Q\perp h^n)^{m-1}$ is stably extended from $A^{m-1}$. 
Let $$\left( Q\perp {h^n} \right)^{m-1} \perp {h^r} \stackrel{ \simeq}{\longrightarrow}
A^{m-1}[T] \otimes \left({Q_1}^{m-1}\perp {h^{n +r}} \right), n \ge d.$$ 
Then $$\left(\left( Q\perp {h^n} \right)^{m-1} \perp {h^r}\right)_{\pi_m} \stackrel{ \simeq}{\longrightarrow}
\left( {\left(A^{m-1}\right)}_{\pi_m}[T] \otimes \left(\left({Q_1}^{m-1}\right)_{\pi_m}\perp {h^{n +r}} \right)\right), n \ge d.$$
By
\cite[{Theorem~3.3}]{MR748229}, we get the isomorphism
$$\left(\left( Q\perp {h^n} \right)^{m-1} \right)_{\pi_m} \stackrel{ \sigma}{\longrightarrow}
\left( {\left(A^{m-1}\right)}_{\pi_m}[T] \otimes \left(\left({Q_1}^{m-1}\right)_{\pi_m}\perp {h^{n}} \right)\right).$$

\noindent Using the extendability for quadratic spaces over $A^m[T]$ via induction hypothesis, we have
$$\tau: {{\left(Q\perp h^n \right)}^m}\stackrel{ \simeq}{\longrightarrow} {A^m[T] \otimes {\left({{Q_1}^m} \perp {h^n} \right)}}.$$ 
\noindent Now by identifying the quadratic spaces $\left( \left(\left( Q\perp {h^n} \right)^{m-1} \right)_{\pi_m} \bigotimes_{{\left(A^{m-1}\right)}_{\pi_m}[T]}  \left({A^{m}}_{\pi_m}[T]\right)	 \right) $ and 
\\${\left( \left( Q\perp {h^n} \right)^{m-1} \bigotimes_{A^{m-1}[T]} A^m[T]\right)}_{\pi_m} $ with $\left( \left( Q\perp {h^n} \right)^{m-1} \bigotimes_{A^{m-1}[T]} \left({(A^{m})}_{\pi_m}[T]\right) \right)$, via the patching for quadratic spaces from \cite{MR752647}, we have maps $\widetilde{\sigma}$, $\widetilde{\tau}$ corresponding to $\sigma$, $\tau$ and
\[\widetilde{\sigma}{\widetilde{\tau}}^{-1}\in
 \O_{(A^m)_{\pi_m}[T]}\left({{(Q_1 \perp {h^n})}^m}_{{{\pi}_m}} \right).\]
Since ${\left((A^m)_{\pi_m}\right)}_{\mathfrak{m}}$ is local,
${\left({{(Q_1)}^m}_{{\pi}_m}\right)}_{\mathfrak{m}}$ is diagonalizable and hence, by
Theorem~\ref{RARII}, 

\begin{equation*}
\O \left({\left({({Q_1}^m)}_{{\pi}_m} \right)}_{\mathfrak{m}} \perp h^n \right)  =  \EO \left({\left({({Q_1}^m)}_{{\pi}_m} \right)}_{\mathfrak{m}} \perp h^n \right) \cdot \O \left(h^n \right).
\end{equation*}

\noindent Therefore we can write 
\[\left({\widetilde{\sigma}{\widetilde{\tau}}^{-1}}\right)_{\mathfrak{m}}  =  \alpha_\mathfrak{m}\beta_\mathfrak{m},\]
where $\alpha_{\mathfrak{m}} \in   \EO_{{\left((A^m)_{\pi_m}\right)_{\mathfrak{m}}}[T]} \left({\left({({Q_1}^m)}_{{\pi}_m} \right)}_{\mathfrak{m}} \perp h^n \right)$ for some $\alpha \in \O_{(A^m)_{\pi_m}[T]} \left({\left({({Q_1}^m)}_{{\pi}_m} \right)} \perp h^n \right)$  with $\alpha(0) = Id$ and  $\beta_{\mathfrak{m}} \in \O_{{\left((A^m)_{\pi_m}\right)_{\mathfrak{m}}}[T]} \left(h^n \right)$ for some $\beta \in \O_{(A^m)_{\pi_m}[T]} \left(h^n \right)$ with $\beta(0) = Id$, via the same argument as in Lemma~\ref{d2}.

Then, by Theorem~\ref{l2}, we have \[\widetilde{\sigma}{\widetilde{\tau}}^{-1} = \alpha\beta\]
with $\alpha \in \O \left({\left({({Q_1}^m)}_{{\pi}_m} \right)} \perp h^n \right)$, $\alpha(0) = Id$, $\beta \in \O_{(A^m)_{\pi_m}[T]} \left(h^n \right)$ and $\beta(0) = Id$. Now via the \textquoteleft deep splitting\textquoteright \,technique introduced in \cite{MR727375}, we can write $\widetilde{\sigma}{\widetilde{\tau}}^{-1} = \beta \in \O(h^n)$.

Now we have
\begin{eqnarray*}
 \left(Q \perp h^n \right)^{m-1} &\simeq &\left(\left(\left( Q\perp {h^n} \right)^{m-1} \right)_{\pi_m}, Id, {{\left(Q\perp h^n \right)}^m} \right)  \\
& \simeq &{\left( {\left(A^{m-1}\right)}_{\pi_m}[T] \otimes \left(\left({Q_1}^{m-1}\right)_{\pi_m}\perp {h^{n}} \right), \alpha\beta, {A^m[T] \otimes {\left({{Q_1}^m} \perp {h^n} \right)}} \right)}\\
& \simeq &{\left({{Q_1}^{m-1}}_{{\pi}_m}[T]\perp h^n, \beta,
{{Q_1}^m}[T]\perp h^n\right)}\\
& \simeq &{{Q_1}^{m-1}[T]\perp \left(h^n,\beta, h^n \right)} =
{Q_1}^{m-1}[T] \perp Q_2,
\end{eqnarray*}
where $Q_2$ is the quadratic ${A}^{m-1}[T]$-space defined
by the patching technique. Now $${Q_1}^{m-1}[T] \perp Q_2 \perp h^r  \simeq Q^{m-1} \perp h^r \simeq {Q_1}^{m-1}[T] \perp h^{n+r}.$$ 

By cancellation of quadratic spaces over local rings \cite{MR0231844}, we have 
$Q_2 \perp h \simeq h^{n+1}.$ Since $\beta(0) = Id$, $Q_2/(T) \simeq h^n$. 
Thus, by Lemma~\ref{Pop}, $Q_2$ is extended from $A^{m-1}$, whence so is 
$(Q\perp h^n)^{m-1}$. Hence the result is true for $l+1$. Then the theorem 
follows by induction.
\end{proof}

\noindent
{\it Acknowledgements.} The first named author is indebted to her advisor 
B. Sury for his support, unstinting help and encouragement during the 
course of this work. She is also thankful to the Tata Institute of 
Fundamental Research, Mumbai for their hospitality from time to time as this work 
progressed. 

\setlength{\bibsep}{0.3pt}
\bibliographystyle{elsart-num-sort}
\section*{References}

\end{document}